
\pdfoutput=1

\documentclass[11pt,reqno]{amsart}

\RequirePackage{pict2e}
\usepackage{amssymb,amscd,url,amsmath,amscd,enumerate}
\usepackage{mathrsfs}
\usepackage{xcolor}    
\usepackage{bbm}       
\usepackage{extarrows}
\usepackage{constants} 
\usepackage{stackrel}
\usepackage{tikz-cd}
\usetikzlibrary{arrows.meta}
\usepackage{centernot}
\usepackage{upref}     
\usepackage{mathtools}
\usepackage[T1]{fontenc}
\usepackage{graphicx}
\usepackage{amsthm}
\RequirePackage[pdfa, psdextra]{hyperref}  
\hypersetup{
    colorlinks=true,
    linkcolor=blue,
    urlcolor=cyan,
    }

\textheight 22.5truecm \textwidth 14.5truecm
\setlength{\oddsidemargin}{0.35in}\setlength{\evensidemargin}{0.35in}
\linespread{1,15}


\setlength{\parskip}{2pt}


\usepackage{color} 
\definecolor{darkgreen}{rgb}{0.0, 0.7, 0.0}


\begin{document}

\allowdisplaybreaks


\title[\fontsize{8}{12}\selectfont Generalized Franchetta conjecture in genus 11]
{On O'Grady's generalized Franchetta conjecture for genus 11 K3 surfaces}
\date{\today}

\author[Yuan Lu]{Yuan Lu}
\address{School of Mathematical Sciences, Peking University}
\email{luyuan@stu.pku.edu.cn}


\newtheorem{theorem}{Theorem}[section]
\newtheorem{lemma}[theorem]{Lemma}
\newtheorem{sublemma}[theorem]{Sublemma}
\newtheorem{conjecture}[theorem]{Conjecture}
\newtheorem{proposition}[theorem]{Proposition}
\newtheorem{corollary}[theorem]{Corollary}
\newtheorem{property}[theorem]{Property}
\newtheorem{hypothesis}[theorem]{Hypothesis}
\theoremstyle{definition}
\newtheorem*{claim}{Claim}
\newtheorem{definition}[theorem]{Definition}
\newtheorem*{intuition}{Intuition}
\newtheorem{example}[theorem]{Example}
\newtheorem{remark}[theorem]{Remark}
\newtheorem{question}[theorem]{Question}
\newtheorem{cheat}[theorem]{Cheat}
\theoremstyle{remark}
\newtheorem*{acknowledgement}{Acknowledgements}
\newtheorem{exercise}[theorem]{exercise}



\newenvironment{notation}[0]{%
  \begin{list}%
    {}%
    {\setlength{\itemindent}{0pt}
     \setlength{\labelwidth}{4\parindent}
     \setlength{\labelsep}{\parindent}
     \setlength{\leftmargin}{5\parindent}
     \setlength{\itemsep}{0pt}
     }%
   }%
  {\end{list}}

\newenvironment{parts}[0]{%
  \begin{list}{}%
    {\setlength{\itemindent}{0pt}
     \setlength{\labelwidth}{1.5\parindent}
     \setlength{\labelsep}{.5\parindent}
     \setlength{\leftmargin}{2\parindent}
     \setlength{\itemsep}{0pt}
     }%
   }%
  {\end{list}}
\newcommand{\Part}[1]{\item[\upshape#1]}
\newcommand{\BB}{\mathbb{B}}
\newcommand{\CC}{\mathbb{C}}
\newcommand{\DD}{\mathbb{D}}
\newcommand{\EE}{\mathbb{E}}
\newcommand{\FF}{\mathbb{F}}
\newcommand{\GG}{\mathbb{G}}
\newcommand{\HH}{\mathbb{H}}
\newcommand{\II}{\mathbb{I}}
\newcommand{\JJ}{\mathbb{J}}
\newcommand{\KK}{\mathbb{K}}
\newcommand{\LL}{\mathbb{L}}
\newcommand{\MM}{\mathbb{M}}
\newcommand{\NN}{\mathbb{N}}
\newcommand{\OO}{\mathbb{O}}
\newcommand{\PP}{\mathbb{P}}
\newcommand{\QQ}{\mathbb{Q}}
\newcommand{\RR}{\mathbb{R}}
\newcommand{\TT}{\mathbb{T}}
\newcommand{\UU}{\mathbb{U}}
\newcommand{\VV}{\mathbb{V}}
\newcommand{\WW}{\mathbb{W}}
\newcommand{\XX}{\mathbb{X}}
\newcommand{\YY}{\mathbb{Y}}
\newcommand{\ZZ}{\mathbb{Z}}
\newcommand{\cA}{\mathcal A}
\newcommand{\cB}{\mathcal B}
\newcommand{\cC}{\mathcal C}
\newcommand{\cD}{\mathcal D}
\newcommand{\cE}{\mathcal E}
\newcommand{\cF}{\mathcal F}
\newcommand{\cG}{\mathcal G}
\newcommand{\cH}{\mathcal H}
\newcommand{\cI}{\mathcal I}
\newcommand{\cJ}{\mathcal J}
\newcommand{\cK}{\mathcal K}
\newcommand{\cL}{\mathcal L}
\newcommand{\cM}{\mathcal M}
\newcommand{\cN}{\mathcal N}
\newcommand{\cO}{\mathcal O}
\newcommand{\cP}{\mathcal P}
\newcommand{\cQ}{\mathcal Q}
\newcommand{\cR}{\mathcal R}
\newcommand{\cS}{\mathcal S}
\newcommand{\cT}{\mathcal T}
\newcommand{\cU}{\mathcal U}
\newcommand{\cV}{\mathcal V}
\newcommand{\cW}{\mathcal W}
\newcommand{\cX}{\mathcal X}
\newcommand{\cY}{\mathcal Y}
\newcommand{\cZ}{\mathcal Z}

\newcommand{\al}{\alpha}
\newcommand{\be}{\beta}
\newcommand{\ga}{\gamma}
\newcommand{\Ga}{\Gamma}
\newcommand{\de}{\delta}
\newcommand{\De}{\Delta}
\newcommand{\ep}{\epsilon}

\newcommand{\ze}{\zeta}
\newcommand{\te}{\theta}
\newcommand{\Te}{\Theta}
\newcommand{\io}{\iota}
\newcommand{\ka}{\kappa}
\newcommand{\la}{\lambda}
\newcommand{\La}{\Lambda}
\newcommand{\si}{\sigma}
\newcommand{\vp}{\varphi}
\newcommand{\om}{\omega}
\newcommand{\Om}{\Omega}
\newcommand{\mr}{\mathrm}
\newcommand{\pr}{\mathrm{pr}}
\newcommand{\red}{\mathrm{red}}
\newcommand{\alg}{\mathrm{alg}}
\newcommand{\tor}{\mathrm{tor}}
\newcommand{\rat}{\mathrm{rat}}
\newcommand{\id}{\mathrm{id}}
\newcommand{\M}{\mathrm{M}}

\newcommand{\Hom}{\operatorname{Hom}}
\newcommand{\Pic}{\operatorname{Pic}}
\newcommand{\codim}{\operatorname{codim}}
\newcommand{\Ext}{\operatorname{Ext}}
\newcommand{\NS}{\operatorname{NS}} 
\newcommand{\im}{\operatorname {Im}}
\newcommand{\CH}{\operatorname {CH}}
\newcommand{\Jac}{\operatorname {Jac}}
\newcommand{\Gr}{\operatorname{Gr}}
\newcommand{\GL}{\operatorname{GL}}
\newcommand{\GSp}{\operatorname{GSp}}
\newcommand{\Spec}{\operatorname{Spec}}
\newcommand{\End}{\operatorname{End}}
\newcommand{\Aut}{\operatorname{Aut}}
\newcommand{\Proj}{\operatorname{Proj}}

\newcommand{\ba}{\backslash}
\newcommand{\su}{\subset}
\newcommand{\pa}{\partial}
\newcommand{\ov}{\overline}
\newcommand{\ti}{\tilde}
\newcommand{\wt}{\widetilde}
\newcommand{\tm}{\times}
\newcommand{\ot}{\otimes}
\newcommand{\op}{\oplus}
\newcommand{\bt}{\bigotimes}
\newcommand{\bp}{\bigoplus}
\newcommand{\ci}{\circ}

\begin{abstract}
    O'Grady's generalized Franchetta conjecture asks whether any codimension two cycle on the universal polarized K3 surface restricts to a multiple of the Beauville–Voisin class on a given K3 surface. We apply Mukai's program for genus $11$ curves and K3 surfaces, together with a result on the tautological generation of the second Chow group of the moduli space of curves, to give an affirmative answer to this conjecture in genus $11$.
\end{abstract}

\maketitle

\tableofcontents
\section{Introduction}
Throughout, we work over the field of complex numbers. Recall that for a projective K3 surface $S$, Beauville and Voisin showed in \cite{BV} that there exists a canonical class $o_S$ in the Chow group $\CH^2(S)$ that satisfies the following properties:
\begin{enumerate}
    \item For each closed point $x\in S$ on a (possibly singular) rational curve $C\subset S$, we have $[x]=o_S\in \CH^2(S)$.
    \item For any divisors $D_1,D_2\in \CH^1(S)$, the intersection $D_1\cdot D_2$ is a multiple of $o_S$.
    \item The second Chern class $c_2(S)=24o_S$.
\end{enumerate}
The class $o_S$ is called the Beauville--Voisin class of the K3 surface $S$.\\
\indent For each $g\ge 3$, denote $\cF_g$ the moduli space of primitively polarized K3 surface of degree $2g-2$ (or equivalently, genus $g$). Then we can take $\cF_g'\su\cF_g$ as the smooth dense open subset parametrizing primitively polarized K3 surfaces with trivial automorphism groups. Hence, we have a universal K3 surface $\pi: \cS_g'\to \cF_g'$.\\
\indent Motivated by the Franchetta conjecture on the moduli space of curves \cite{AC87}, O'Grady presented the generalized Franchetta conjecture on the moduli space of K3 surfaces in \cite[Section 5]{OG}, which is stated as follows.
\begin{conjecture}[Generalized Franchetta Conjecture]
\label{Generalized Franchetta Conjecture}
     Assume $g\ge3$. For each closed point $x\in \cF_g'$, denote $S_x:=\pi^{-1}(x)$, then for each class $a\in \CH^2(\cS_g')$, the restriction~$a|_{S_x}$ is a multiple of the Beauville--Voisin class $o_{S_x}$.
\end{conjecture}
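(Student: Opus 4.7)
The plan is to attack the conjecture uniformly by splitting it into two parts: first, identify a ``tautological'' subring of $\CH^2(\cS_g')$ whose restriction to each fiber is manifestly a multiple of the Beauville--Voisin class; second, show that the restriction map $\CH^2(\cS_g') \to \CH^2(S_x)$ is controlled by this tautological subring. The main input, which has to be supplied separately for each value of $g$, is a sufficiently explicit geometric model of $\cF_g'$ that makes such control possible.

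For the tautological part, let $\lambda = c_1(\cL) \in \CH^1(\cS_g')$, where $\cL$ is the universal polarization (well-defined on $\cF_g'$ up to sign since automorphisms are trivial), and let $T_\pi$ denote the relative tangent bundle. The tautological codimension-two classes are generated by $\lambda^2$, $c_2(T_\pi)$, $\pi^*\CH^2(\cF_g')$, and products $\lambda\cdot\pi^*(D)$ for $D\in \CH^1(\cF_g')$. Restricting to $S_x$: the pullbacks $\pi^*\CH^{\ge 1}(\cF_g')$ restrict to zero for a general closed point, $\lambda^2|_{S_x} = L_x\cdot L_x = (2g-2)\,o_{S_x}$ by property (2) of the Beauville--Voisin class, and $c_2(T_\pi)|_{S_x} = c_2(S_x) = 24\,o_{S_x}$ by property (3). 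Therefore every tautological codimension-two class restricts to a rational multiple of $o_{S_x}$, and the conjecture reduces to proving that every element of $\CH^2(\cS_g')$ is tautological.

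To establish this last statement I would, for each $g$, present $\cF_g'$ as (an open subset of) a GIT quotient $[P_g/G_g]$, where $P_g$ parametrizes appropriately rigidified K3 surfaces realized inside a homogeneous variety $V_g$, and $G_g$ is a reductive group. This turns $\cS_g'$ into a quotient of a universal family sitting inside $P_g \times V_g$ whose Chow ring is generated by equivariant Chern classes of tautological bundles. A spreading/specialization argument then identifies $\CH^2(\cS_g')$ with the image of the equivariant tautological ring, and every such generator is expressed in terms of the four classes listed above, proving the conjecture for that $g$. In practice the required model is classical for $g\in\{3,4,5\}$ (complete intersections), due to Mukai for $g\le 10$ and several sporadic $g$ (sections of homogeneous spaces), and for $g=11$ it comes from the Mukai correspondence $\cF_{11}\dashrightarrow \cM_{11}$ combined with tautological generation of $\CH^2(\cM_{11})$, which is the case carried out in this paper.

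The principal obstacle is precisely that Step~3 depends on the existence of such a geometric model, and for infinitely many $g$ (notably large $g$) no Mukai-type description of the generic polarized K3 surface is currently known. A uniform proof of Conjecture~\ref{Generalized Franchetta Conjecture} would require either a new family of models or a model-free control of $\CH^2(\cS_g')$ --- for instance, a motivic decomposition of the universal K3 family isolating the transcendental part in codimension two. The present paper therefore focuses on $g=11$, where the necessary ingredients (Mukai's curve-theoretic construction and tautological generation on $\cM_{11}$) are exactly what is needed to complete the strategy.
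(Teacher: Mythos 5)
The statement you are proving is a conjecture, and your proposal correctly recognizes that it is open in general; but as an account of how the $g=11$ case is actually settled, it has genuine gaps rather than just deferred details. First, the ``Mukai correspondence'' is not a birational map $\cF_{11}\dashrightarrow\cM_{11}$ (impossible for dimension reasons: $\dim\cF_{11}=19$, $\dim\cM_{11}=30$); it is a birational map from the $\PP^{11}$-bundle $\cP$ over $\cF_{11}'$ parametrizing pairs $(S,C)$ with $C\in|\cL|$. Consequently the object whose $\CH^2$ must be controlled is not $\cM_{11}$ but the universal curve over $\cP$, which one compares with $\ov{M}_{11,1}$. Making that comparison precise is most of the work: one must show that, after deleting a subset of codimension $\ge 2$ in $\cP$, the family of hyperplane sections consists of integral curves with at most one node, embeds as an open subset of the coarse space $\ov{M}_{11}$, and pulls back the universal pointed curve. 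This requires the deformation-theoretic injectivity statement (Theorem \ref{main thm 1}), the bound on the locus of curves with delta invariant $>1$ (Proposition \ref{codimension of curve singularity geq its delta invariant}), the detour through pseudo-stable curves and the Hassett--Hyeon contraction to handle cusps, and the removal of curves with automorphisms. None of this appears in your outline, and without it the transfer of cycles from the moduli of curves back to $\cS_{11}'$ does not go through.

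Second, the input ``tautological generation of $\CH^2(\cM_{11})$'' is not an available black box: what is needed is that $\CH^2(\ov{M}_{11,1})_\QQ$ (compactified, pointed, coarse) is tautological, and this is one of the paper's main new results (Theorem \ref{CH2 is tautologically generated for rationally connected moduli space introduction}), proved via rational connectedness of $\ov{M}_{11,1}$, a Bloch--Srinivas decomposition of the diagonal on a smooth Galois cover, the vanishing of $H^3(\ov{M}_{g,n})$, and the Canning--Larson--Payne theorem that $H^4(\ov{M}_{g,n},\QQ)$ is tautological. Finally, your reduction ``prove every element of $\CH^2(\cS_g')$ is tautological'' is stronger than what the paper establishes or needs: the actual argument is fiberwise, showing that the pullback of a class to the $\PP^{10}$-bundle $Z$ over $S_x$ lies in $\QQ\cdot\wt{p}_x^*o_{S_x}\oplus H\cdot\wt{p}_x^*\CH^1(S_x)_\QQ\oplus\QQ\cdot H^2$, using the projective bundle formula, the Beauville--Voisin divisor property, and explicit relations among the seven relevant tautological classes on $\ov{M}_{11,1}$. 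Your first step (tautological classes restrict to multiples of $o_{S_x}$) is correct and is indeed the standard reduction from \cite{PSY}, but the heart of the $g=11$ proof lies in the parts your proposal leaves unspecified.
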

At present, Conjecture \ref{Generalized Franchetta Conjecture} is largely open. So far, it is known in the following cases: in \cite{PSY}, Pavic--Shen--Yin proved Conjecture \ref{Generalized Franchetta Conjecture} for $3\le g\le 10$ and $g=12,13,16,18,20$, which are the genera $g\ge3$ with known Mukai models; in \cite{FL}, Fu--Laterveer used  special cubic fourfolds to prove Conjecture \ref{Generalized Franchetta Conjecture} for $g=14$. These results leave a clear gap: genus~$g=11$ is the lowest genus for which neither a Mukai model is known, nor does it have a nice correspondence with special cubic fourfolds.\\
\indent The main result of this paper is to resolve this case by introducing a third, completely different strategy.
\begin{theorem}
\label{My work}
    Conjecture \ref{Generalized Franchetta Conjecture} is true for genus $g=11$.
\end{theorem}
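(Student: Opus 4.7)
The plan is to transfer the conjecture from the K3 side to the curve side via Mukai's birational correspondence in genus $11$, and then invoke a tautological-generation theorem for $\CH^2$ of the moduli space of curves.

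First, I would set up Mukai's picture: the $\PP^{11}$-bundle $\cP_{11} \to \cF_{11}'$ parametrizing pairs $(S, C)$ with $C \in |L|$ has dimension $30 = \dim \cM_{11}$, and the forgetful morphism $m\colon \cP_{11} \dashrightarrow \cM_{11}$ sending $(S, C)$ to $C$ is birational, with rational inverse given by a Brill--Noether moduli space of stable vector bundles on $C$, in the sense of Mukai's genus-$11$ construction. Inside the pulled-back universal surface $\cS_\cP := \cS_{11}' \times_{\cF_{11}'} \cP_{11}$ sits the universal curve $\cC_\cP \subset \cS_\cP$, which is birational via $m$ to the universal curve $\cC_{11} \to \cM_{11}$.

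Given $a \in \CH^2(\cS_{11}')$, I would pull back to $a_\cP \in \CH^2(\cS_\cP)$. Since $\cP_{11} \to \cF_{11}'$ is a projective bundle, pullback is injective on Chow groups and preserves fiberwise restrictions, so it suffices to prove the Beauville--Voisin property for $a_\cP$ on K3 fibers of $\cS_\cP \to \cP_{11}$. Using the Fourier--Mukai kernel on $\cS_\cP \times_{\cP_{11}} \cC_\cP$ arising from the universal bundle in the Mukai description, I would transport $a_\cP$ to a codimension-$2$ class on a relative moduli object built from the universal curve over $\cM_{11}$ (for instance, a relative symmetric power of $\cC_{11}$).

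At this point, I would invoke the tautological-generation theorem for $\CH^2$ in the appropriate moduli setting: the transferred class becomes a $\QQ$-linear combination of tautological classes ($\psi$-, $\kappa$-, and Hodge classes). Each such class restricts on a generic fiber to a polynomial in the canonical class $K_C$, which under the inverse Fourier--Mukai transport pulls back to a class in the subring of $\CH^*(S)$ generated by the polarization $L$. By the Beauville--Voisin theorem, every codimension-$2$ class in this subring lies in $\QQ o_S$, yielding the conjecture for very general $S \in \cF_{11}'$, and hence for all $S$ by standard specialization arguments.

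The main obstacle, as I see it, is carrying out the Chow-level transport through the birational Mukai map with sufficient precision: controlling the action of the Fourier--Mukai kernel on codimension-$2$ classes, handling the indeterminacy locus of $m$ (e.g., by excising a codimension-$\ge 2$ subset from $\cM_{11}$), and verifying that the available tautological-generation result (in the spirit of Canning--Larson) is strong enough to cover the particular class obtained after transfer. Once these technicalities are settled, the final verification that tautological classes land in $\QQ o_S$ reduces to an explicit Chern-class computation with the Mukai kernel.
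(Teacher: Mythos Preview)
Your broad strategy---transfer the problem to the curve side via Mukai's genus-$11$ birational map and then invoke tautological generation of $\CH^2$---is exactly the paper's strategy. But your proposed transport mechanism is the wrong one, and this is not a mere technicality.

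You propose to pull $a$ back to $\cS_\cP$ and then apply a Fourier--Mukai transform with kernel the universal bundle on $\cS_\cP \times_{\cP} \cC_\cP$, landing on some relative moduli object over $\cM_{11}$, and later invert this. The paper does nothing of the sort. The key observation you are missing is that the incidence variety $\cC_\cP$ itself (parametrizing $(S,\cL,C,x)$ with $x\in C$) is already a $\PP^{10}$-bundle over $\cS_{11}'$ via the map $\wt p\colon (S,\cL,C,x)\mapsto (S,\cL,x)$: the fiber over a point of $S$ is the linear system of curves in $|\cL|$ through that point. Pulling $a$ back along $\wt p$ is therefore injective on Chow and completely explicit via the projective bundle formula. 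No Fourier--Mukai, no Chern-class computation with a universal sheaf, no inversion problem. Your proposed route would force you to control how a rank-$2$ Mukai kernel acts on codimension-$2$ cycles and then undo it, which mixes degrees and is genuinely hard to make precise; the $\PP^{10}$-bundle structure bypasses this entirely.

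On the curve side, the paper then shows that after excising a codimension-$\ge 2$ locus (this is where the real work lies: a deformation-theoretic argument bounds the $\delta$-invariant of the singular curves appearing, so that over the remaining open set one has at worst one node), $\cC_\cP$ embeds as an open subset of the coarse space $\ov{M}_{11,1}$. The tautological generation input is not a general Canning--Larson statement but a specific theorem proved in the paper: $\CH^2(\ov{M}_{g,n})_\QQ$ is tautological whenever $\ov{M}_{g,n}$ is rationally connected, applied via Barros's unirationality of $\cM_{11,1}$. The endgame is then a direct check that each of the seven surviving tautological generators of $\CH^2(\ov{M}_{11,1})_\QQ$ restricts, on the open set, to a sum of products of divisor classes plus a pullback from the base $\cP$; Beauville--Voisin on the K3 fiber finishes. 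Your ``polynomial in $K_C$'' heuristic is morally right but the actual verification goes through the projective bundle decomposition of $\CH^2(Z)$ over $S_x$, not through any kernel computation.
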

Our method is based on exploiting Mukai's program for genus $11$ curves and K3 surfaces, which provides a birational bridge between the moduli space of K3 surfaces and the moduli space of curves. The main result of Mukai's program in genus $11$ is stated as follows:
\begin{theorem}[Mukai's program in genus $11$ \cite{Muk}]
\label{Mukai's program}
    Let $\cP$ be the $\PP^{11}$-bundle over $\cF_{11}'$ parametrizing 
$$\{[S,\mathcal{L},C]\mid(S,\mathcal{L})\in 
\mathcal{F}_{11}',C \in |\mathcal{L}|\}.$$
Then the rational map $\phi:\cP\dashrightarrow \cM_{11}$ mapping $[S,\cL,C]$ to the curve $C$ is a birational map. Here, $\cM_{11}$ is the moduli stack of smooth curves of genus $11$.
\end{theorem}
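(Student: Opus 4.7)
The plan is to prove birationality of $\phi$ by first matching dimensions and then establishing both dominance and generic injectivity. A direct count gives $\dim\cM_{11}=3g-3=30$ and $\dim\cP=\dim\cF_{11}'+11=19+11=30$, so both sides have the same dimension, and it suffices to verify dominance and generic injectivity of $\phi$.

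Dominance is the easier direction: for a general $(S,\cL)\in\cF_{11}'$, a generic $C\in|\cL|$ is smooth of genus $11$ by Bertini and adjunction, so $\phi$ is well-defined on a dense open subset of $\cP$. A standard deformation-theoretic computation for pairs $(S,C)$ shows that the differential of $\phi$ is surjective at a general point, so $\phi$ has dense image in $\cM_{11}$.

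The heart of the argument is generic injectivity: a general curve $C$ of genus $11$ lies on a unique polarized K3 surface. Mukai's strategy is to reconstruct $S$ intrinsically from $C$ as a moduli space $M_C(v)$ of stable vector bundles on $C$ for a suitable Mukai vector $v$, chosen so that $v\cdot v=0$ and $M_C(v)$ is a K3 surface of the correct polarization type. For genus $11$, $v$ involves certain rank-$2$ bundles with prescribed Brill--Noether conditions, arising as restrictions to $C$ of an exceptional bundle on $S$. The inverse map is then $C\mapsto M_C(v)$; to verify this is genuinely inverse to $\phi$, one constructs a rigid Mukai--Lazarsfeld bundle $E$ on $S$ whose restriction to $C$ has Mukai vector $v$, and shows that the restriction morphism $S\to M_C(v)$, $E'\mapsto E'|_C$, is an isomorphism. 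Since $M_C(v)$ depends only on $C$, this gives generic injectivity.

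The main obstacle lies in this reconstruction. One must (i) identify the correct Mukai vector $v$ for genus $11$, (ii) establish smoothness and irreducibility of $M_C(v)$ with the right Hilbert polynomial, and (iii) prove stability of the Lazarsfeld--Mukai bundle on $S$ together with bijectivity of the restriction morphism $S\to M_C(v)$. Mukai's proof relies crucially on an explicit projective model of genus $11$ K3 surfaces as a linear section of a suitable homogeneous variety, through which the identification $S\cong M_C(v)$ is made concrete and stability can be checked by a direct geometric argument.
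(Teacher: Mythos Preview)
The paper does not prove this theorem at all: it is quoted from Mukai \cite{Muk} and used as a black box. So there is no ``paper's own proof'' to compare against, and your proposal is really a sketch of Mukai's original argument rather than of anything in this paper.

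As a sketch of Mukai's proof, the broad outline is right up to a point: the dimension count, the reduction to generic injectivity, and the idea of reconstructing $S$ from $C$ as a moduli space $M_C(v)$ of stable vector bundles on $C$ are all faithful to \cite{Muk}. In genus $11$ the relevant bundles are rank-$2$ with canonical determinant and at least $7$ sections, and one shows that for general $C$ this moduli space is a K3 surface naturally isomorphic to the $S$ containing $C$.

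However, your final paragraph contains a genuine error. You say that ``Mukai's proof relies crucially on an explicit projective model of genus $11$ K3 surfaces as a linear section of a suitable homogeneous variety.'' This is false, and in fact the whole point of the genus $11$ case---emphasized in the introduction of the present paper---is that \emph{no} such Mukai model is known: $g=11$ is precisely the first gap in the list of genera admitting a description as linear sections of homogeneous varieties. Mukai's argument in \cite{Muk} proceeds instead via the intrinsic moduli-of-bundles construction and Brill--Noether theory on $C$, together with properties of the Lazarsfeld--Mukai bundle on $S$; there is no ambient homogeneous variety in sight. So step (iii) of your plan, as you describe it, cannot be carried out in the way you suggest, and you should replace that part with the actual vector-bundle argument from \cite{Muk}.
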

\indent This birational equivalence allows us to transfer the problem to a problem about the moduli space of curves. Specifically, we show that the Chow group $\CH^2(\cS_{11}')_\QQ$ can be controlled by the Chow group $\CH^2(\cF_{11}')_\QQ$ and the Chow group $\CH^2(\ov{M}_{11,1})_\QQ$ of the coarse moduli space $\ov{M}_{11,1}$ of stable genus $11$ curves with one marked point.\\
\indent Another key ingredient is the following result on the tautological generation of the second Chow group of the coarse moduli space of pointed curves.
\begin{theorem}
\label{CH2 is tautologically generated for rationally connected moduli space introduction}
    Suppose $2g-2+n>0$, and the coarse moduli space $\ov{M}_{g,n}$ is rationally connected. Then $\CH^2(\ov{M}_{g,n})_\mathbb{Q}$ is generated by tautological classes.
\end{theorem}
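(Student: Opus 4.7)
My plan is to combine the Bloch--Srinivas decomposition of the diagonal, applied to the rationally connected variety $\ov{M}_{g,n}$, with the tautological generation of the Picard groups of the boundary moduli. First, working on the smooth DM stack $\ov{\mathcal M}_{g,n}$ (or with $\QQ$-coefficients on the coarse space, which has at worst finite quotient singularities), rational connectedness yields $\CH_0(\ov{M}_{g,n})_\QQ = \QQ$. Taking $D = \partial \ov{M}_{g,n}$ (nonempty in all nontrivial cases), any generator of $\CH_0(\ov{M}_{g,n})_\QQ$ can be represented by a point in $D$, so $\CH_0(\ov{M}_{g,n}\setminus D)_\QQ = 0$. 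Bloch--Srinivas then gives an integer $N > 0$ and a decomposition
\begin{equation*}
N\Delta_{\ov{M}_{g,n}} = \Gamma_1 + \Gamma_2
\end{equation*}
in $\CH^{\dim \ov{M}_{g,n}}(\ov{M}_{g,n}\times \ov{M}_{g,n})_\QQ$ with $\Gamma_1$ supported on $\{\mathrm{pt}\}\times \ov{M}_{g,n}$ and $\Gamma_2$ supported on $\ov{M}_{g,n}\times D$. Acting on $\alpha\in \CH^2(\ov{M}_{g,n})_\QQ$, the $\Gamma_1$-contribution vanishes because any positive-codimension class restricts to zero at a point, so $N\alpha = (\Gamma_2)_*\alpha$ is supported on $D$: there exists $\beta\in \Pic(\wt D)_\QQ$ with $\alpha = \tfrac{1}{N}\,i_*\beta$, where $i\colon \wt D\to \ov{M}_{g,n}$ is the normalization of the boundary composed with its inclusion.

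Next I would show $\Pic(\wt D)_\QQ$ is tautologically generated and then conclude by pushforward. The irreducible components of $\wt D$ are the normalized boundary divisors: $\ov{M}_{g-1,n+2}/(\ZZ/2)$ for the irreducible boundary (the $\ZZ/2$ swapping the two preimages of the node), and products $\ov{M}_{h,S\cup\{\ast\}}\times \ov{M}_{g-h,S^c\cup\{\ast\}}$ for the reducible boundaries. By the Arbarello--Cornalba theorem, $\Pic(\ov{M}_{g',n'})_\QQ$ is tautologically generated for every $(g',n')$ with $2g'-2+n'>0$. Together with the K\"unneth splitting $\Pic(X\times Y)_\QQ = \Pic(X)_\QQ \oplus \Pic(Y)_\QQ$ (valid when both factors have vanishing $H^1(\mathcal O)$, which holds here) and the symmetry of tautological classes under marked-point permutations, this implies that $\Pic(\wt D)_\QQ$ is tautologically generated. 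Since the tautological ring $R^\bullet(\ov{M}_{g,n})_\QQ$ is, by its inductive definition, closed under pushforwards along the boundary gluing maps $\xi\colon \wt\Delta\to \ov{M}_{g,n}$, it follows that $i_*\beta\in R^2(\ov{M}_{g,n})_\QQ$, hence $\alpha\in R^2(\ov{M}_{g,n})_\QQ$, proving the theorem.

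The main obstacle I anticipate is justifying the Bloch--Srinivas decomposition on the coarse moduli space $\ov{M}_{g,n}$, which has finite quotient singularities rather than being smooth. I would handle this by working on the smooth DM stack $\ov{\mathcal M}_{g,n}$, where the standard proof via correspondences applies, and then pass to the coarse space using that $\QQ$-coefficient Chow groups agree for varieties with mild quotient singularities. A secondary technicality is the bookkeeping for the $\ZZ/2$ action on the irreducible boundary component and for self-gluings in the reducible case, which requires care but introduces no classes outside the tautological ring.
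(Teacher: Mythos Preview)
Your argument has a genuine gap in the application of Bloch--Srinivas. In the decomposition $N\Delta = \Gamma_1 + \Gamma_2$ with $\Gamma_1$ supported on $\{\mathrm{pt}\}\times \ov{M}_{g,n}$ and $\Gamma_2$ supported on $\ov{M}_{g,n}\times D$, the divisor $D$ is \emph{not} something you get to choose: it arises from spreading out a rational equivalence that holds over the generic point, and the complement of the open set over which this spreads is the $D$ you are stuck with. The hypothesis you verify, namely $\CH_0(\ov{M}_{g,n}\setminus \partial\ov{M}_{g,n})_\QQ=0$, controls the support of $\Gamma_1$ (the ``$V\times X$'' piece), not the support of $\Gamma_2$. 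So you cannot take $D=\partial\ov{M}_{g,n}$, and for an arbitrary divisor $D\subset \ov{M}_{g,n}$ there is no reason $\Pic(\wt D)_\QQ$ should consist of tautological classes; your second step therefore does not apply.

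This is exactly the difficulty the paper's proof is designed to overcome. The paper also starts from Bloch--Srinivas with an uncontrolled divisor $D$, but then argues indirectly: passing to a smooth Galois cover $X\to \ov{M}_{g,n}$, it shows that for any $\alpha\in\CH^2_{\hom}$ the class $[\wt\Gamma_2]_*\alpha\in\CH^1(\wt D)$ is homologically, hence algebraically, trivial (this uses only that $\wt D$ is smooth projective, not what $D$ is), so $N\alpha\in\CH^2(X)_{\alg}$. The Abel--Jacobi map together with $H^3(\ov{\cM}_{g,n},\QQ)=0$ then forces the $G$-invariant part of $\CH^2(X)_{\alg}$ to be torsion, and one concludes via the tautological generation of $H^4(\ov{M}_{g,n},\QQ)$ (Canning--Larson--Payne). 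Your shortcut through $\Pic$ of the boundary would avoid both the Abel--Jacobi step and the $H^4$ input, but as written it does not go through; to repair it you would need an argument that the uncontrolled $D$ can be replaced by the boundary, and no such argument is available from rational connectedness alone.
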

By \cite[Corollary 2]{Bar}, the coarse moduli space $\ov{M}_{11,1}$ is rationally connected. Therefore,~$\CH^2(\ov{M}_{11,1})_\QQ$ is tautologically generated. This complete description of $\CH^2(\ov{M}_{11,1})_\QQ$, combined with the bridge provided by Mukai's program, allows us to conclude the proof of Theorem \ref{My work}.\\
\indent This article is organized as follows. In Section \ref{Section: Reduction to moduli of curve}, we exploit Mukai's program to relate the generalized Franchetta conjecture in genus $11$ to the coarse moduli space $\ov{M}_{11,1}$, with the main result stated in Theorem \ref{P11 is open subset of coarse noduli M11 after deleting codim 2 subsets}. In Section \ref{Section: result on chow gp}, we focus on the proof of Theorem \ref{CH2 is tautologically generated for rationally connected moduli space introduction}. In Section \ref{Section: Last section}, we combine these results to conclude the proof of Theorem \ref{My work}.\\
\\
\indent \textbf{Acknowledgments} The author is deeply grateful to his advisor, Qizheng Yin, for suggesting the topic of this project and for his invaluable guidance and insight throughout its completion. His encouragement was essential to this work.\\
\indent The author also thanks Samir Canning, Chunyi Li, Zhiyuan Li, Zhiyu Tian and Liang Xiao for useful conversations. The author is especially grateful to Yuchen Liu for a very helpful discussion, which led directly to the proof of Theorem \ref{CH2 is tautologically generated for rationally connected moduli space introduction}.\\
\indent This work was partially carried out during the author's visit to the University of Chicago during the REU program. The author thanks Peter May for his hospitality and for organizing the program.\\
\indent The author is also indebted to his parents and family for their unwavering support throughout his studies.
\section{Connection with the coarse moduli space of genus 11 curves}
\label{Section: Reduction to moduli of curve}
\indent In this section, we exploit Mukai's program to establish the bridge between the generalized Franchetta conjecture in genus $11$ and the coarse moduli space $\ov{M}_{11,1}$. The main result of this section is Theorem \ref{P11 is open subset of coarse noduli M11 after deleting codim 2 subsets}.\\ 
\indent We begin by introducing the geometric setup. Recall that $\cF_{11}'$ is a smooth dense open subset of the moduli space of primitively polarized K3 surface of genus $11$, parametrizing the ones with trivial automorphism groups, and $\pi:\cS_{11}'\to \cF_{11}'$ is the universal K3 surface.\\
\indent Let $\cP$ be the $\PP^{11}$-bundle over $\cF_{11}'$ parametrizing 
$$\{(S,\mathcal{L},C)\mid(S,\mathcal{L})\in 
\mathcal{F}_{11}',C \in |\mathcal{L}|\}.$$
Define the variety $\cC$ parametrizing 
$$\{(S,\mathcal{L},C,x)\mid(S,\mathcal{L})\in 
\mathcal{F}_{11}',C \in |\mathcal{L}|,x \in C\}.$$
We have natural projections $\wt{p}:\cC\to \cS_{11}'$ and $\wt{\pi}:\cC\to \cP$. The morphism $\wt{p}$ is a $\PP^{10}$-bundle, and the morphism $\wt{\pi}$ is a family of curves of arithmetic genus $11$.\\
\indent By Bertini's theorem, general fibers of $\wt{\pi}$ are smooth. Hence $\wt{\pi}$ induces a rational map $\phi:\cP\dashrightarrow \cM_{11}$, where $\cM_{11}$ is the moduli stack of smooth curves of genus $11$. Mukai's program in genus $11$ \cite{Muk} shows that $\phi$ is birational. The rational map $\phi$ further induces a rational map $\cC\dashrightarrow \cC_{11}$, where $\cC_{11}$ is the universal curve over $\cM_{11}$.\\
\indent Combining these constructions together, we get the following commutative diagram
\begin{equation}
\label{commutative diagram}
\begin{tikzcd}
 & & \mathcal{C}_{11} \arrow[d]\\
 \cC \arrow[r,"\wt{\pi}"] \arrow[rru,dashed, bend left] \arrow[d,"\wt{p}"]& \mathcal{P} \arrow[r,dashed,"\phi"] \arrow[d,"p"]& \mathcal{M}_{11} \\
 \mathcal{S}_{11}' \arrow[r,"\pi"]& \mathcal{F}_{11}'. &
\end{tikzcd}
\end{equation}
\begin{proposition}
\label{refinement of moduli of genus 11 K3}
    There exists a dense open subset $\mathcal{F}_{11}^\circ \subset \mathcal{F}_{11}'$ such that if we denote $\cP^\circ:=p^{-1}(\mathcal{F}_{11}^\circ)$ and
    $\cC^\circ:=\wt{\pi}^{-1}(\cP^\circ)$, the morphism $\wt{\pi}:\cC^\circ \to \cP^\circ$ has geometrically integral fibers.
\end{proposition}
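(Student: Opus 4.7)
The plan is to identify the open locus inside $\cP$ where the fibers of $\wt{\pi}$ are geometrically integral, and then push this openness down to $\cF_{11}'$ using the properness of the $\PP^{11}$-bundle $p$. To start, $\wt{\pi}:\cC\to\cP$ is flat and proper of relative dimension one, because $\cC$ sits as the universal relative Cartier divisor inside $\cS_{11}'\times_{\cF_{11}'}\cP$, cut out by the tautological equation. Applying the standard openness statement for geometric integrality of fibers in a flat proper family of finite presentation (Stacks Project, tag \texttt{0559}; compare EGA IV.12.2.4 together with openness of geometric irreducibility), the subset
\[
U := \{t\in \cP\mid \cC_t\ \text{is geometrically integral}\}
\]
is open in $\cP$. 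Denote its closed complement by $Z:=\cP\setminus U$.

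Since $p:\cP\to\cF_{11}'$ is a $\PP^{11}$-bundle, it is proper, and therefore $p(Z)\subset\cF_{11}'$ is closed; I would then set $\cF_{11}^\circ:=\cF_{11}'\setminus p(Z)$. By construction $\cP^\circ=p^{-1}(\cF_{11}^\circ)\subset U$, which yields the required geometric integrality of every fiber of $\wt{\pi}$ over $\cP^\circ$. Since $\cF_{11}'$ is irreducible, to conclude that $\cF_{11}^\circ$ is non-empty (and hence dense) it suffices to exhibit a single $(S,\cL)\in \cF_{11}'$ such that every $C\in|\cL|$ is geometrically integral, i.e.\ such that the fibre $p^{-1}(S,\cL)$ is disjoint from $Z$.

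For this final input I would invoke the existence of Picard rank one K3 surfaces of genus $11$: the locus of $(S,\cL)\in\cF_{11}'$ with $\Pic(S)=\ZZ\cdot\cL$ is the complement of a countable union of Noether--Lefschetz divisors, and in particular non-empty. For such $(S,\cL)$, any effective divisor $D=\sum_i m_i C_i\in |\cL|$ with the $C_i$ distinct integral curves satisfies $[C_i]=k_i\cL$ for positive integers $k_i$ (positivity coming from ampleness of $\cL$), so $\sum_i m_i k_i = 1$ forces a single term with $m_1=k_1=1$, and $D=C_1$ is integral. The only non-elementary ingredient in this plan is the openness of the geometrically integral locus in a flat proper family; every other step is routine, and I do not foresee a substantive obstacle.
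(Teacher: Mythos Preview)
Your argument is correct and in fact cleaner than the paper's. Both proofs hinge on the same geometric input---that a polarized K3 surface $(S,\cL)$ with $\Pic(S)=\ZZ\cdot\cL$ has every member of $|\cL|$ integral---but they handle the passage from $\cP$ down to $\cF_{11}'$ differently.

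The paper only invokes \emph{constructibility} of the geometrically-integral locus (Stacks tags \texttt{055B} and \texttt{0579}), so the complement $Z$ is merely constructible; after applying Chevalley, $p(Z)$ is constructible in $\cF_{11}'$, and the author then runs a period-domain argument to show that $p(Z)$ lies in the countable union of Noether--Lefschetz loci, whence (being constructible) it is contained in a proper Zariski-closed subset. Your route instead uses the stronger \emph{openness} of the geometrically-integral locus for a proper flat morphism of finite presentation (EGA~IV, Th\'eor\`eme~12.2.4, applied with $\cF=\cO_\cC$; your Stacks tag \texttt{0559} is not quite the right pointer, but the EGA reference is). Once $Z$ is genuinely closed, properness of the $\PP^{11}$-bundle $p$ immediately makes $p(Z)$ closed, and a single Picard-rank-one point suffices to show $\cF_{11}^\circ\neq\emptyset$.

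What each approach buys: yours is shorter and avoids the analytic/Baire-type step through the period domain entirely, at the cost of invoking a slightly heavier input (openness rather than constructibility). The paper's version is more hands-on about \emph{why} the bad locus is small---it literally locates it inside the Noether--Lefschetz divisors---which is perhaps more informative but not needed for the bare statement. Your verification that $\wt\pi$ is flat and proper (relative Cartier divisor inside a proper family) is the point one should not skip, and you handled it.
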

\begin{proof}
    By \cite[tag 055B]{Stack} and \cite[tag 0579]{Stack}, let 
    $W$ be the set of points $x\in \cP$ where~$\wt{\pi}^{-1}(x)$ is geometrically integral, then $W$ is a constructible subset of $\cP$. Let~$Z:=\mathcal{P} \backslash W$, then by Chevalley’s theorem, $p(Z)$ is a constructible subset of $\cF_{11}'$.\\
    \indent By \cite[Corollary 6.4.3]{Huy}, $\mathcal{F}_{11}'$ can be identified with a dense open subset of $\wt O(\Lambda_{10})\backslash D_{10}$ via the period map $\vp_{10}$. Here, $\Lambda_{10}$ is the lattice $E_8(-1)^{\oplus2}\oplus U^{\oplus2} \oplus \mathbb{Z}(-20)$, the space~$D_{10}$ is the period domain of $\PP(\La\otimes_{\ZZ}\CC)$, and $\wt O(\Lambda_{10})$ is the subgroup of the orthogonal group~$O(\Lambda_{10})$ that fixes the discriminant. The constructions of them can be found in \cite[Section 6]{Huy}.\\
    \indent Denote $\pr:D_{10} \to \wt O(\Lambda_{10})\backslash D_{10}$ the natural morphism. For any closed point $x \in \mathcal{F}_{11}'$, denote $S_x:=\pi^{-1}(x)$ and denote $\cL_x$ the polarization on $S_x$. Take $p\in D_{10}$ such that $\vp_{10}(x)=\mr{pr}(p)$, then the orthogonal complement of $\cL_x$ in $\Pic(S_x)$ is isomorphic to~$\Lambda_{10}\cap p^\perp$ as lattices.\\
    \indent Suppose that $\vp_{10}(x) \notin \cup_{v \in \Lambda_{10}}\pr(v^\perp \cap D_{10})$, then $\Pic(S_x)=\ZZ\cdot \cL_x$. Hence, for any curve $C \in |\cL_x|$, $C$ is integral. So we have $\vp_{10}(p(Z))\su \cup_{v\in \La_{10}} \pr(v^\perp \cap D_{10})$.\\
    \indent For any $v\in \La_{10}$, the set $\pr(v^\perp \cap D_{10})$ is an analytically closed subset in $\wt O(\Lambda_{10})\backslash D_{10}$ of (real) codimension $2$. Since $p(Z)$ is constructible, and $p(Z)\su \cup_{v\in \La_{10}} \vp_{10}^{-1}(\pr(v^\perp \cap D_{10}))$, we have $p(Z)$ is contained in a proper Zariski closed subset of $\mathcal{F}_{11}'$. Therefore, we can take a dense open subset $\mathcal{F}_{11}^\circ \subset \mathcal{F}_{11}'$ that does not meet $p(Z)$. The open subset $\cF_{11}^\ci$ satisfies the required property.
\end{proof}
\indent We now establish the following lemma, which is used in Theorem \ref{main thm 1}.
\begin{lemma}
\label{dimH^0(C,j*T_P10)=120}
Suppose $(S,L)\in \cF_{11}^\circ$, and let $j:S\hookrightarrow \mathbb{P}^{11}$ be the embedding induced by a set of generators of $H^0(S,\mathcal{L})$. Denote the coordinates of $\PP^{11}$ as $[x_0:\cdot\cdot \cdot :x_{11}]$, and denote $\mathbb{P}^{10}:=V(x_0)$.\\
\indent Consider the Cartesian diagram
    \begin{center}
    \begin{tikzcd}
        C \arrow[r,hook,"i"]\ar[d,hook,"j'"]\ar[dr, phantom, "\square"] & S \arrow[d,hook,"j"] \\
        \mathbb{P}^{10} \arrow[r,hook,"i_0"] & \mathbb{P}^{11}.
    \end{tikzcd}
     \end{center}
Then the restriction $j'^*:H^0(\mathbb{P}^{10},T_{\mathbb{P}^{10}}) \to H^0(C,j'^*T_{\mathbb{P}^{10}})$ is an isomorphism.
\end{lemma}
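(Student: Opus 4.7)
My plan is to compare cohomology of the Euler sequences on $\PP^{10}$ and on $C$, pulled back along the embedding $j'$. Since $T_{\PP^{10}}$ is locally free, the pullback of the Euler sequence
\begin{equation*}
0 \to \cO_{\PP^{10}} \to \cO_{\PP^{10}}(1)^{\oplus 11} \to T_{\PP^{10}} \to 0
\end{equation*}
to $C$ remains short exact, so I obtain a commutative diagram of long exact cohomology sequences. The top row gives a short exact sequence on global sections since $H^1(\PP^{10},\cO)=0$, recovering $\dim H^0(\PP^{10},T_{\PP^{10}})=11\cdot 11-1=120$; my goal is to establish the analogous short exactness on $C$.

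\textbf{Key identifications.} First I identify $C\hookrightarrow \PP^{10}$ with the canonical embedding. Adjunction on the K3 surface $S$ gives $\omega_C=(\omega_S\otimes \cO_S(C))|_C=\cL|_C=\cO_C(1)$. Since $(S,\cL)\in \cF_{11}^\circ$, Proposition~\ref{refinement of moduli of genus 11 K3} guarantees that $C$ is integral, so $h^0(\cO_C)=1$ and $h^0(\omega_C)=11$. Applying $H^0$ to $0\to \cO_S\to \cL\to \cL|_C\to 0$, and using $H^1(S,\cO_S)=0$, the restriction $H^0(S,\cL)\to H^0(C,\omega_C)$ is surjective with kernel $\CC\cdot x_0$. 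Hence $x_1|_C,\dots,x_{11}|_C$ form a basis of $H^0(C,\omega_C)$, and in particular the restriction $H^0(\PP^{10},\cO(1))\to H^0(C,\omega_C)$ is an isomorphism.

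\textbf{Main argument and main obstacle.} With these identifications in hand, the first two vertical maps in the commutative diagram of cohomology sequences become isomorphisms ($\CC\to \CC$ and $\CC^{121}\to \CC^{121}$). By the short five lemma it then suffices to prove that the bottom row
\begin{equation*}
0\to H^0(\cO_C)\to H^0(\cO_C(1))^{\oplus 11}\to H^0(C,j'^*T_{\PP^{10}})\to 0
\end{equation*}
is short exact, i.e., that the connecting homomorphism $\delta\colon H^1(\cO_C)\to H^1(\cO_C(1))^{\oplus 11}$ is injective. Here $\delta$ is induced by multiplication by $(x_1|_C,\dots,x_{11}|_C)$, and by Serre duality on $C$ its transpose is the map $H^0(\cO_C)^{\oplus 11}\to H^0(\omega_C)$ sending $(a_1,\dots,a_{11})\mapsto \sum a_i x_i|_C$. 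The latter is an isomorphism of $11$-dimensional spaces precisely because $x_1|_C,\dots,x_{11}|_C$ is a basis of $H^0(\omega_C)$. The only nontrivial point is this Serre-duality identification of $\delta$ with multiplication by the canonical coordinates; once it is in place, the diagram chase closes the argument.
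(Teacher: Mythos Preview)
Your proof is correct, and its overall architecture---comparing the Euler sequence on $\PP^{10}$ with its pullback to $C$, then reducing to the injectivity of $H^1(\cO_C)\to H^1(\cO_C(1))^{\oplus 11}$---is the same as the paper's. The difference lies in how that injectivity is established. The paper lifts the problem to the surface: it embeds the map into a diagram involving $H^2(S,\cO_S(-1))$ and $H^2(S,\cO_S^{\oplus 11})$, then applies Serre duality on $S$ to reduce to the obvious surjectivity of $H^0(S,\cO_S)\to H^0(S,\cO_S(1))/\langle x_1,\dots,x_{11}\rangle$. Your route is more direct: you observe via adjunction that $\omega_C=\cO_C(1)$ (valid since $C$ is a Cartier divisor in the smooth surface $S$, hence Gorenstein), and then Serre duality on $C$ itself dualizes the $H^1$-map to the map $H^0(\cO_C)^{\oplus 11}\to H^0(\omega_C)$ given by the canonical basis $x_1|_C,\dots,x_{11}|_C$. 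This buys you a shorter argument that avoids the two auxiliary diagrams on $S$; the paper's approach, by contrast, stays on the smooth surface and never needs to invoke duality on the possibly singular curve. Both are clean, but yours is the more economical path here. One terminological quibble: the map you call $\delta$ is not a connecting homomorphism but the $H^1$-level map induced by $\cO_C\to\cO_C(1)^{\oplus 11}$; this does not affect the argument.
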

\begin{proof}
    We have a commutative diagram
    \begin{center}
    \begin{tikzcd}
         0 \arrow[r] & H^0(\mathbb{P}^{10},\mathcal{O}_{\mathbb{P}^{10}}) \arrow[r]\arrow[d]  & H^0(\mathbb{P}^{10},\mathcal{O}_{\mathbb{P}^{10}}(1)^{\oplus11}) \arrow[r]\arrow[d] & H^0(\mathbb{P}^{10},T_{\mathbb{P}^{10}} \arrow[r]\arrow[d]) \arrow[r]\arrow[d] &  0\\
         0 \arrow[r] & H^0(C,\mathcal{O}_{C}) \arrow[r]  & H^0(C,\mathcal{O}_{C}(1)^{\oplus11}) \arrow[r] & H^0(C,j'^*T_{\mathbb{P}^{10}}) & 
    \end{tikzcd}
    \end{center}
    where the 2 rows are exact.\\
    \indent By Proposition \ref{refinement of moduli of genus 11 K3}, $C$ is geometrically integral. So the left vertical map is an isomorphism. The middle map is also an isomorphism due to the following commutative diagram of short exact sequences.
    \begin{center}
    \begin{tikzcd}
        0 \arrow[r] & H^0(\PP^{11},\mathcal{O}_{\mathbb{P}^{11}}) \arrow[r]\arrow[d]  & H^0(\PP^{11},\mathcal{O}_{\mathbb{P}^{11}}(1)) \arrow[r]\arrow[d] & H^0(\PP^{10},\mathcal{O}_{\mathbb{P}^{10}}(1))\arrow[r]\arrow[d] & 0 \\
        0 \arrow[r] & H^0(S,\mathcal{O}_{S}) \arrow[r]  & H^0(S,\mathcal{O}_{S}(1)) \arrow[r] & H^0(C,\mathcal{O}_C(1)) \arrow[r] & 0.
    \end{tikzcd}
    \end{center}
    \indent Therefore, we only need to prove that the bottom row in the first diagram is exact on the right, or equivalently, the morphism $f:H^1(C,\mathcal{O}_C) \to H^1(C,\mathcal{O}_C(1)^{\oplus11})$ is injective.\\
    \indent Consider the morphism between short exact sequences
    \begin{center}
        \begin{tikzcd}
            0 \ar[r] & \cO_S(-1) \ar[r]\ar[d] & \cO_S \ar[r]\ar[d,"\vp"] & i_*\cO_C\ar[r]\ar[d,"\psi"] & 0\\
            0 \ar[r] & \cO_S^{\oplus11}\ar[r] & \cO_S(1)^{\oplus11} \ar[r] & i_*\cO_C(1)^{\oplus11}\ar[r] & 0
        \end{tikzcd}
    \end{center}
    where $\vp$ is given by $a\mapsto (ax_1,...,ax_{11})$, and the other vertical maps are induced by $\vp$. Then the morphism $f$ is induced by $\psi$.\\
    \indent By \cite[Corollary 8.2]{SD}, $H^1(S, \mathcal{O}_{S}(1))=0$. Using $H^1(S,\cO_S)=0$ and $$H^2(S,\cO_S(1)) \cong H^0(S,\cO_S(-1))^\vee=0,$$
    we have the following commutative diagram
    \begin{center}
    \begin{tikzcd}
    0 \arrow[r] & H^1(C,\mathcal{O}_C) \arrow[r]\arrow[d,"f"] & H^2(S,\mathcal{O}_S(-1)) \arrow[r,"h"]\arrow[d,"g"] & H^2(S, \mathcal{O}_S) \arrow[r]\ar[d] & 0\\
    0 \arrow[r] & H^1(C,\mathcal{O}_C(1)^{\oplus11}) \arrow[r]& H^2(S,\mathcal{O}_S^{\oplus11}) \arrow[r] & 0 &
    \end{tikzcd}
    \end{center}
    where the 2 rows are exact.\\
    \indent Therefore, we only need to prove $h|_{\ker g}$ is injective. By Serre duality, it is equivalent to show that
    $$H^0(S, \mathcal{O}_S) \to \mathrm{coker}(H^0(S,\mathcal{O}_S^{\oplus11}) \to H^0(S,\mathcal{O}_S(1)))$$ is surjective, where $\mathcal{O}_S^{\oplus11} \to \mathcal{O}_S(1)$ is given by $(a_1,...,a_{11})\mapsto a_1x_1+...+a_{11}x_{11}$, and~$\mathcal{O}_S \mapsto  \mathcal{O}_S(1)$ is given by $a \mapsto ax_0$. It is easy to check the surjectivity. This completes the proof of Lemma \ref{dimH^0(C,j*T_P10)=120}.
\end{proof}
For each closed point $(S,\cL,C)\in \cP^\circ$, by \cite[Theorem 4.4]{Vis}, the isomorphic class of first order deformations of $C$ is isomorphic to $\Ext^1(\Om_C,\cO_C)$. Hence the flat family of curves $\wt{\pi}:\cC^\ci\to \cP^\ci$ induces a morphism $T_{[S,C]}\cP^\circ \to \Ext^1(\Om_C,\cO_C)$.\\
\indent We can now establish a key result about the relation between the normal bundle of a K3 surface and the deformation theory of its hyperplane sections. This result is encoded in the following theorem.
\begin{theorem}
\label{main thm 1}
    Suppose $(S,\mathcal{L}) \in \mathcal{F}_{11}^\circ$, then $H^0(S,\mathcal{N}_{S/\mathbb{P}^{11}}(-1)) \geq 12$. Furthermore, for any curve $C \in |\mathcal{L}|$, we have $H^0(S,\mathcal{N}_{S/\mathbb{P}^{11}}(-1)) = 12$ if and only if the following two conditions hold:
\begin{enumerate}
    \item $\Hom(\Omega_C,\mathcal{O}_C)\allowbreak =0$.
    \item The morphism $T_{[S,C]}\cP^\circ \to \Ext^1(\Om_C,\cO_C)$ is injective.
\end{enumerate}
\end{theorem}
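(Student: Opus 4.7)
The plan is to exploit the twisted tangent--normal sequence
\[
0 \to T_S(-1) \to T_{\PP^{11}}|_S(-1) \to \cN_{S/\PP^{11}}(-1) \to 0
\]
to reduce everything to a vanishing statement for $H^1(T_S(-1))$. First I would collect the ambient cohomology. Since $T_S \cong \Om_S$ on a K3 surface, Kodaira--Akizuki--Nakano gives $H^0(T_S(-1)) = H^2(\Om_S(1))^\vee = 0$. The twisted Euler sequence, combined with the fact that $x_0,\ldots,x_{11}$ form a basis of $H^0(\cO_S(1))$, yields $h^0(T_{\PP^{11}}|_S(-1)) = 12$ and $h^1(T_{\PP^{11}}|_S(-1)) = 0$ (the latter because the map $H^2(\cO_S(-1)) \to H^2(\cO_S)^{\oplus 12}$ is Serre-dual to the isomorphism $\CC^{12} \xrightarrow{\sim} H^0(\cO_S(1))$). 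Feeding these into the long exact sequence of the twisted tangent--normal sequence gives the identity
\[
h^0(\cN_{S/\PP^{11}}(-1)) = 12 + h^1(T_S(-1)),
\]
which immediately yields the lower bound $\ge 12$ and identifies equality with the vanishing $H^1(T_S(-1)) = 0$.

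To further analyze $H^1(T_S(-1)) = 0$, I would use the restriction sequence $0 \to T_S(-1) \to T_S \to T_S|_C \to 0$ together with $H^0(T_S) = 0$ to see that it is equivalent to the conjunction: $H^0(T_S|_C) = 0$ \emph{and} $H^1(T_S) \to H^1(T_S|_C)$ is injective. The adjunction sequence $0 \to T_C \to T_S|_C \to \cN_{C/S} \cong \om_C \to 0$ (with $\cN_{C/S} \cong \om_C$ by adjunction, still valid for the Gorenstein curve $C$) then translates $H^0(T_S|_C) = 0$ into condition~(1) together with injectivity of the connecting map $\delta \colon H^0(\om_C) \to \Ext^1(\Om_C, \cO_C)$.

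The crux is to match the remaining pair of conditions ($\delta$ injective and $H^1(T_S) \to H^1(T_S|_C)$ injective) with condition~(2). For this I would use the exact sequence of tangent spaces
\[
0 \to H^0(\cN_{C/S}) \to T_{[S,C]}\cP^\circ \to T_{[S]}\cF_{11}^\circ \to 0
\]
arising from the $\PP^{11}$-bundle structure $p \colon \cP^\circ \to \cF_{11}^\circ$, and compare it with the long exact sequence of $0 \to T_C \to T_S|_C \to \om_C \to 0$ via functoriality of the Kodaira--Spencer map under the closed embedding $\cC^\circ \hookrightarrow \cS_{11}' \times_{\cF_{11}^\circ} \cP^\circ$. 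This functoriality shows that the KS map restricts to $\delta$ on $H^0(\cN_{C/S})$, and that the induced map $T_{[S]}\cF_{11}^\circ \to H^1(T_C)/\delta(H^0(\om_C)) \hookrightarrow H^1(T_S|_C)$ agrees with the geometric restriction map. The final subtlety is that $T_{[S]}\cF_{11}^\circ \subset H^1(T_S)$ has codimension one, yet the kernel of $H^1(T_S) \to H^1(T_S|_C)$ automatically lies inside $T_{[S]}\cF_{11}^\circ$: the composition $H^1(T_S) \to H^1(T_S|_C) \to H^1(\om_C) \cong H^2(\cO_S)$ (with the last isomorphism coming from $0 \to \cO_S \to \cL \to \om_C \to 0$) equals cup product with $c_1(\cL)$, whose kernel is precisely $T_{[S]}\cF_{11}^\circ$. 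The main obstacle is this last functoriality/compatibility step, which requires a careful setup and some attention to the possibly singular locus of $C$.
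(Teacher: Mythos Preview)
Your approach is correct and takes a genuinely different route from the paper's. The paper stays in the embedded picture throughout: it works with the \emph{untwisted} sequence $0\to T_S\to j^*T_{\PP^{11}}\to\cN_{S/\PP^{11}}\to 0$, interprets $H^0(\cN_{S/\PP^{11}})$ as embedded deformations of $S$, and singles out the subspace $K\subset H^0(\PP^{11},T_{\PP^{11}})\cong H^0(j^*T_{\PP^{11}})$ of reparametrizations preserving the hyperplane $\PP^{10}\supset C$. It then compares with $H^0(C,\cN_{C/\PP^{10}})$ via the sequence $0\to\cN_{C/\PP^{10}}^\vee\to j'^*\Om_{\PP^{10}}\to\Om_C\to 0$; the extra input is the preceding lemma that $H^0(\PP^{10},T_{\PP^{10}})\xrightarrow{\sim}H^0(C,j'^*T_{\PP^{10}})$, and the conclusion comes from a direct dimension count $\dim\ker i^* = 12$. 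Your argument is intrinsic --- you never touch $\PP^{10}$ or that lemma --- and reduces everything to the single vanishing $H^1(T_S(-1))=0$, which you then unpack via the restriction sequence to $C$ and the conormal sequence. This is more conceptual, and it makes transparent why the number $12$ is really $h^0(\cO_S)^{\oplus 12}$. The price is that you must verify the Kodaira--Spencer compatibilities (restriction to $\delta$ on the fibre direction, and to $H^1(T_S)\to H^1(T_S|_C)$ on the base) by hand, whereas the paper gets the analogous identifications for free inside the ambient embedded-deformation picture.

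One point to fix: the ``adjunction sequence'' $0\to T_C\to T_S|_C\to\om_C\to 0$ is \emph{not} short exact when $C$ is singular --- the cokernel of $T_S|_C\to\cN_{C/S}$ is the skyscraper $\mathcal{E}xt^1(\Om_C,\cO_C)$. You are already implicitly using the correct substitute, namely the long exact sequence obtained by applying $\Hom_{\cO_C}(-,\cO_C)$ to the conormal sequence $0\to\cN_{C/S}^\vee\to\Om_S|_C\to\Om_C\to 0$; this is what produces your $\delta\colon H^0(\om_C)\to\Ext^1(\Om_C,\cO_C)$ and the inclusion $\mathrm{coker}\,\delta\hookrightarrow H^1(T_S|_C)$. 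Rephrase accordingly (and replace the stray $H^1(T_C)$ by $\Ext^1(\Om_C,\cO_C)$). Your final step --- that the composite $H^1(T_S)\to H^1(T_S|_C)\to H^1(\om_C)\cong H^2(\cO_S)$ coincides with cup product by $c_1(\cL)$ --- is exactly Bloch's semi-regularity map identification, so it holds on the nose; in any case you only need the kernels to agree.
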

\begin{proof}
    Take an embedding $j:S \hookrightarrow \mathbb{P}^{11}$ such that $\mathcal{L}\cong j^*\mathcal{O}_{\mathbb{P}^{11}}(1)$. We have an exact sequence
    \begin{center}
        \begin{tikzcd}
            0 \ar[r] & H^0(S,j^*T_{\mathbb{P}^{11}}) \ar[r,"\vp"] & H^0(S,\mathcal{N}_{S/\mathbb{P}^{11}}) \ar[r] & H^1(S,T_S).
        \end{tikzcd}
    \end{center}
    \indent Consider the commutative diagram
    \begin{center}
    \begin{tikzcd}
        0 \arrow[r] & H^0(\PP^{11},\mathcal{O}_{\mathbb{P}^{11}}) \arrow[r]\arrow[d]  & H^0(\PP^{11},\mathcal{O}_{\mathbb{P}^{11}}(1)^{\oplus12}) \arrow[r]\arrow[d] & H^0(\PP^{11},T_{\mathbb{P}^{11}})\arrow[r]\arrow[d,"\psi"] & 0 \\
        0 \arrow[r] & H^0(S,\mathcal{O}_{S}) \arrow[r]  & H^0(S,\mathcal{O}_{S}(1)^{\oplus12}) \arrow[r] & H^0(S,j^*T_{\mathbb{P}^{11}}) \arrow[r] & 0 
    \end{tikzcd}
    \end{center}
    where the 2 rows are exact. Since the left two vertical maps are isomorphisms, the map $\psi:H^0(\mathbb{P}^{11},T_{\mathbb{P}^{11}}) \to H^0(S, j^*T_{\mathbb{P}^{11}})$ is an isomorphism.\\
    \indent Choose a coordinate $[x_0:\cdot\cdot\cdot:x_{11}]$ of $\PP^{11}$ such that $C=V(x_0) \cap S$. The deformation of $S$ as a genus $11$ K3 surface is obtained through deformation of $S$ in $\mathbb{P}^{11}$. The deformation of $C=V(x_0)$ in a fixed K3 surface $S$ is also obtained through deformation of $S$ in $\PP^{11}$. Therefore, the morphism $H^0(S,\mathcal{N}_{S/\mathbb{P}^{11}}) \to T_{[S,C]}\cP^\circ$ is surjective.\\
    \indent Suppose that $v\in\ker (H^0(S,\mathcal{N}_{S/\mathbb{P}^{11}}) \to T_{[S,C]}\cP^\circ)$, then $v$ induces a trivial deformation of $S$. Hence $v$ is in the image of $\vp:H^0(S, j^*T_{\mathbb{P}^{11}})\to H^0(S,\mathcal{N}_{S/\mathbb{P}^{11}})$, and also the image of $\vp\circ\psi$. Suppose $v=\vp(\psi(w))$ for some $w\in H^0(\PP^{11},T_{\PP^{11}})$.\\
    \indent By \cite[Theorem 4.4]{Vis},  $H^0(\PP^{11},T_{\PP^{11}})$ classifies automorphisms of the trivial first order deformation of $\PP^{11}$. The morphism $\vp\circ\psi$ induces a map
    \begin{align*}
        & F:\{\text{automorphisms of the trivial first order deformation of }\PP^{11}\}\\
        & \to\{\text{first order deformations of S in }\PP^{11}\}.
    \end{align*}
    Suppose $w$ corresponds to the automorphism
    $$\rho:\PP^{11}\times \Spec \CC[t]/(t^2)\to \PP^{11}\times \Spec \CC[t]/(t^2)$$ 
    of the trivial first order deformation of $\PP^{11}$, then from the construction in \cite{Vis},~$F(\rho)$ corresponds to the first order deformation 
    $$\rho(S\times\Spec \CC[t]/(t^2))\subset \PP^{11}\times \Spec \CC[t]/(t^2).$$
    Therefore, $F(\rho)$ induces a trivial deformation of $C\subset S$ if and only if $\rho$ induces a trivial deformation of $V(x_0) \su \PP^{11}$, or equivalently, 
    $$w\in K:=\ker(H^0(\PP^{11},T_{\PP^{11}})\to H^0(\PP^{10},
    i_0^*T_{\PP^{11}})\to  H^0(\PP^{10},
    \cN_{\PP^{10}/\PP^{11}})).$$
    Here, we denote $\PP^{10}:=V(x_0)$ and denote $i_0:\PP^{10}\hookrightarrow\PP^{11}$ the inclusion. Hence we have an isomorphism
    $H^0(S,\mathcal{N}_{S/\mathbb{P}^{11}})/\vp(\psi(K)) \cong T_{[S,C]}\cP^\ci$.\\
    \indent We have a natural morphism $h:K\to H^0(\PP^{10},T_{\PP^{10}})$. By a simple computation, we can show that $h$ is surjective and $\dim \ker h=12$.\\
    \indent Denote $i:C\hookrightarrow S$ and $j':C\hookrightarrow \PP^{10}$ the inclusions, then we have $i^*\mathcal{N}_{S/\mathbb{P}^{11}}^\vee \cong \mathcal{N}_{C/\mathbb{P}^{10}}^\vee$. Hence  $\mathcal{N}_{C/\mathbb{P}^{10}}^\vee$ is locally free. Since $C$ is generically smooth, we have a short exact sequence 
    $$0 \to \mathcal{N}_{C/\mathbb{P}^{10}}^\vee \to j'^*\Omega_{\mathbb{P}^{10}} \to \Omega_C \to 0.$$
    Therefore, we have a diagram
    \begin{center}
    \begin{tikzcd}
         & K \arrow[r,hook,"\vp\circ\psi"]\arrow[d,"g"] & H^0(S,\mathcal{N}_{S/\mathbb{P}^{11}})\arrow[d,"i^*"] & \\
        \Hom(\Omega_C,\mathcal{O}_C)\arrow[r,hook] & H^0(C,j'^*T_{\mathbb{P}^{10}}) \arrow[r,"f"] & H^0(C,\mathcal{N}_{C/\mathbb{P}^{10}}) \arrow[r,"\delta"] & \Ext^1(\Omega_C,\mathcal{O}_C)
    \end{tikzcd}
    \end{center}
    where $g$ is obtained by composing $h$ and $j'^*:H^0(\mathbb{P}^{10},T_{\mathbb{P}^{10}}) \to H^0(C,j'^*T_{\mathbb{P}^{10}})$. Suppose~$w\in K$ corresponds to the automorphism $\rho$ of the trivial first order deformation of~$\PP^{11}$, then we can check that $h(w)$ corresponds to $\rho|_{\PP^{10}\times \Spec \CC[t]/(t^2)}$ as an automorphism of the trivial first order deformation of $\PP^{10}$. Hence $f(g(w))$ corresponds to the first order deformation 
    $$\rho(C\times\Spec \CC[t]/(t^2))\su \PP^{10}\times \Spec\CC[t]/(t^2)$$
    of $C\su \PP^{10}$, which is also the first order deformation corresponding to $i^*(\vp(\psi(w)))$. Hence the above diagram commutes.\\
    \indent Using Lemma \ref{dimH^0(C,j*T_P10)=120}, we have that $g$ is surjective and $\ker g=\ker h$. Since $\vp\ci\psi$ is injective, we have 
    $$\dim H^0(S,\cN_{S/\cP^{11}}(-1))=\dim\ker i^*\geq\dim \ker g=12.$$
    Moreover, 
    \begin{align*}
       & \dim H^0(S,\cN_{S/\cP^{11}}(-1))=\dim\ker i^*=12\\
        \iff &\ker i^*=(\vp\ci \psi)(\ker g)\\
        \iff &\ker i^*=(\vp\ci \psi)(\ker f \circ g)\text{ and }\ker f \circ g=\ker g\\
        \iff &H^0(S,\mathcal{N}_{S/\mathbb{P}^{11}})/\vp(\psi(K))\to H^0(C,\cN_{C/\PP^{10}})/\im f\text{ is injective }
        \text{and }f\text{ is injective}\\
        \iff &T_{[S,C]}\cP^\circ \to \Ext^1(\Omega_C,\mathcal{O}_C)\text{ is injective and }\Hom(\Omega_C,\mathcal{O}_C)=0.\qedhere
    \end{align*}
\end{proof}
\indent Since the rational map $\phi$ given in diagram \ref{commutative diagram} is birational, there exists a dense open subset $U \subset \cP^\circ$ such that for each closed point $[S,C]\in U$, $C$ is a smooth curve, and $\phi|_U$ is an open embedding. For any closed point $[S,C] \in U$, using $T_{[C]}\cM_{11}\cong \Ext^1(\Om_C,\cO_C)$, we have that $\Hom(\Omega_C,\mathcal{O}_C)=0$ and $T_{[S,C]}\cP^\circ \to \Ext^1(\Om_C,\cO_C)$ is an isomorphism. Hence by Theorem \ref{main thm 1}, $H^0(S,\mathcal{N}_{S/\mathbb{P}^{11}}(-1)) = 12$.\\
\indent We now restrict $\mathcal{F}_{11}^\circ$ to the dense open subset $p(U)$, and by abuse of notation, continue to denote it as $\cF_{11}^\ci$. Consequently, we redefine $\cP^\circ:=p^{-1}(\mathcal{F}_{11}^\circ)$, $\cS_{11}^\circ:=\pi^{-1}(\cF_{11}^\circ)$, and~$\cC^\circ:=\wt{\pi}^{-1}(\cP^\circ)$. Then for any $[S,C]\in \cP^\circ$, by Theorem \ref{main thm 1}, $\Hom(\Omega_C,\mathcal{O}_C)=0$ and $T_{[S,C]}\cP^\circ \to \Ext^1(\Om_C,\cO_C)$ is injective. We redraw diagram \ref{commutative diagram} as follows:
\begin{equation}
\label{commutative diagram new}
\begin{tikzcd}
 & & \mathcal{C}_{11} \arrow[d]\\
 \cC^\circ \arrow[r,"\wt{\pi}"] \arrow[rru,dashed, bend left] \arrow[d,"\wt{p}"]& \mathcal{P}^\circ \arrow[r,dashed,"\phi"] \arrow[d,"p"]& \mathcal{M}_{11} \\
 \mathcal{S}_{11}^\circ \arrow[r,"\pi"]& \mathcal{F}_{11}^\circ. &
\end{tikzcd}
\end{equation}
\indent Next, we address the issue of curve singularities. To construct a well-behaved map from $\cP^\ci$ to the moduli space of curves, we need to control the singularities that appear in the family $\wt{\pi}:\cC^\ci\to \cP^\ci$. Recall that for an integral curve $C$, its delta invariant is defined as $\de(C):=p_a(C)-p_a(\wt{C})$, where $\wt{C}$ is the normalization of $C$ and $p_a$ is arithmetic genus. We begin with a characterization of curves with delta invariant one.
\begin{lemma}
\label{singularities with delta invariant one}
    For any integral curve $C$ over the field of complex numbers $\CC$ with only planar singularities, its delta invariant $\delta(C)=1$ if and only if $C$ has exactly one node or one cusp as its only singularity.
\end{lemma}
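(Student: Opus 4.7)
The plan is to reduce the statement to a local analysis at each singular point of $C$. Recall that the delta invariant is additive over singularities, i.e.\ $\delta(C) = \sum_{p \in C_{\mathrm{sing}}} \delta_p$ where $\delta_p := \dim_\CC \wt{\cO}_{C,p}/\cO_{C,p}$, and every singular point contributes $\delta_p \geq 1$. Hence $\delta(C) = 1$ forces $C$ to have a unique singular point $p$ with $\delta_p = 1$. The ``if'' direction is then a direct local calculation: for the nodal germ $\cO_p \cong \CC\{x,y\}/(xy)$ the normalization is $\CC\{x\} \times \CC\{y\}$ and the quotient is spanned by the idempotent $(1,0)$, while for the cuspidal germ $\cO_p \cong \CC\{x,y\}/(y^2-x^3)$ the normalization is $\CC\{t\}$ via $(x,y) \mapsto (t^2, t^3)$ and the quotient is spanned by $t$; in both cases $\delta_p = 1$.

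For the ``only if'' direction, the plan is to classify planar curve germs with $\delta_p = 1$. First I would invoke the classical bound $\delta_p \geq \binom{m_p}{2}$ for planar singularities, where $m_p$ is the multiplicity, which forces $m_p = 2$ (since $m_p \le 1$ corresponds to smooth points). Next, writing a local analytic equation $f = f_2 + f_3 + \cdots \in \CC\{x,y\}$ with $f_2 \neq 0$ a homogeneous quadratic, I would split into two cases according to the rank of $f_2$. If $\mathrm{rank}(f_2) = 2$, a linear change of coordinates yields $f_2 = xy$ and the holomorphic Morse lemma gives $f \sim xy$, so $p$ is a node. If $\mathrm{rank}(f_2) = 1$, say $f_2 = y^2$, the splitting lemma yields a normal form $f \sim y^2 + x^k$ for some $k \geq 3$. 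A direct computation with the explicit parametrization (one branch $x=t^2, y= \pm i\,t^k$ when $k$ is odd; two branches $y = \pm i\,x^{k/2}$ when $k$ is even) then gives $\delta_p = \lfloor k/2 \rfloor$, so $\delta_p = 1$ forces $k = 3$, i.e.\ $p$ is a cusp.

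I do not expect serious conceptual obstacles here: every ingredient (additivity of $\delta$, the multiplicity bound, the splitting lemma, and the delta computation for the normal form $y^2 + x^k$) is classical. A more intrinsic alternative that avoids normal forms would exploit the Gorenstein property of planar singularities (they are hypersurfaces in a smooth surface), giving $\dim_\CC(\wt{\cO}_p/\mathfrak{c}_p) = 2\delta_p$ with $\mathfrak{c}_p$ the conductor ideal; then $\delta_p = 1$ forces $\dim \wt{\cO}_p/\mathfrak{c}_p = 2$, which bounds the number of branches $r_p$ by $2$. In the case $r_p=2$ one has $\wt{\cO}_p = R_1 \times R_2$ with conductor $\mathfrak{m}_1 \times \mathfrak{m}_2$ and $\cO_p = \CC + \mathfrak{c}_p$ is the node; in the case $r_p=1$ one has $\wt{\cO}_p = \CC\{t\}$ with conductor $(t^2)$ and $\cO_p = \CC + (t^2) = \CC\{t^2, t^3\}$ is the cusp. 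The only subtlety worth flagging is that all normal-form statements are applied in the analytic (or formal) category, but this is harmless since the delta invariant is preserved under completion.
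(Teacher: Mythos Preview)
Your proposal is correct, and both your main argument and your Gorenstein/conductor alternative work without gaps. The overall skeleton matches the paper's proof (reduce to a single singular point, show the multiplicity is~$2$, then split on the rank of the quadratic part), but the toolkit is genuinely different.

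The paper's pivot is Milnor's formula $\mu = 2\delta - r + 1$: since $\delta = 1$ forces $\mu \le 2$, one immediately gets $f \notin \mathfrak{m}^3$ (otherwise $f_x, f_y \in \mathfrak{m}^2$ and $\mu \ge 3$), and in the rank-$1$ case $f = x^2(1+\cdots) + xy^2(\cdots) + y^3(d+\cdots)$ the same bound forces $d \neq 0$; the proof then finishes with an explicit hand-built coordinate change to $z^2 + w^3$. Your route replaces Milnor's formula by two more elementary classical inputs: the inequality $\delta_p \ge \binom{m_p}{2}$ to force $m_p = 2$, and the holomorphic Morse/splitting lemma to obtain the normal form $y^2 + x^k$, after which the closed formula $\delta = \lfloor k/2 \rfloor$ pins down $k=3$. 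Your argument is arguably more modular and avoids citing Milnor's formula, at the cost of invoking the splitting lemma as a black box; the paper's is more self-contained once Milnor's formula is granted. Your conductor-based alternative is the most intrinsic of the three and bypasses normal forms entirely, which is elegant, though you should be explicit that no component of the conductor can equal the full branch ring (else $\widehat{\cO}_{C,p}$ would acquire a nontrivial idempotent, contradicting locality).
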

\begin{proof}
    The ``if'' direction is straightforward. We now prove the ``only if'' direction.\\
    \indent Obviously $C$ has exactly one singularity, denote it as $p$. Suppose $\cO_{C,p}\cong \Spec A$, and $m$ is the maximal ideal of $A$. Since $p$ is a planar curve singularity, the completion $\widehat{A}\cong \CC[[x,y]]/(f)$ for some formal polynomial $f\in \CC[[x,y]]$. Denote $m'$ the maximal ideal of $\CC[[x,y]]$. Then $f\in m'^2$.\\
    \indent By \cite[Theorem 10.5]{Mil}, we have $\mu=2\delta(C)-r+1$, where $r$ is the number of analytic branches passing through $p$, and $\mu$ is the intersection number of $f_x,f_y\in \CC[[x,y]]$, or equivalently, the length of $\CC[[x,y]]/(f_x,f_y)$ as a $\CC[[x,y]]$-module. It follows that $\mu\le2$.\\
    \indent If $f\in m'^3$, then $f_x,f_y\in m'^2$. So $\operatorname{length}\CC[[x,y]]/(f_x,f_y)\geq 3$, contradiction. Therefore, $f\notin m'^3$.\\
    \indent Suppose $f-q(x,y)\in m'^3$ where $q(x,y)$ is a quadratic form, then $q\neq0$. If $q$ is nondegenerate, then $p$ is a node by \cite[tag 0C4D]{Stack}. Otherwise, we can change the coordinates to assume $q=x^2$. Therefore, we can write
    $$f=x^2(1+p(x,y))+xy^2q(y)+y^3(d+r(y))$$
    where $p(x,y)\in \CC[[x,y]]$, $q(y),r(y)\in\CC[[y]]$, $d\in \CC$, and $p(0,0)=0$, $r(0)=0$. If $d=0$, we can check $\operatorname{length}\CC[[x,y]]/(f_x,f_y)\geq 3$, contradiction. Hence $d\neq 0$.\\
    \indent Now we change the coordinates
    $$z=(1+p(x,y))^\frac{1}{2}x+\frac{y^2q(y)}{2(1+p(x,y))^\frac{1}{2}},$$
    $$w=y(d+r(y)-\frac{yq(y)^2}{4(1+p(x,y))})^\frac{1}{3},$$
   then $\CC[[x,y]]/f\cong\CC[[z,w]]/\allowbreak(z^2+w^3)$ as topological local rings. Hence $p$ is a cusp.
\end{proof}
\indent We now show that curves with high delta invariants form a high-codimension subset in the parameter space $\cP^\ci$.
\begin{proposition}
\label{codimension of curve singularity geq its delta invariant}
    For any integer $\de_0>0$, there exists a dense open subset $V\su \cP^\circ$ such that
    \begin{enumerate}
        \item For any closed point $[S,C]\in V$, $C$ is an integral curve with $\de(C)\le \de_0$.
        \item $\codim \cP^\circ\backslash V>\de_0$.
    \end{enumerate}
\end{proposition}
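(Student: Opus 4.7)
My plan is to establish both properties by projecting $\cP^\circ$ to $\cF_{11}^\circ$ and invoking a fiber-wise dimension bound for the locus of integral curves with large delta invariant in a linear system on a K3 surface.

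First, I would set $X := \{[S,\mathcal{L},C] \in \cP^\circ : \delta(C) \geq \delta_0 + 1\}$ and $V := \cP^\circ \setminus X$. By Proposition \ref{refinement of moduli of genus 11 K3}, every fiber of $\wt{\pi}: \cC^\circ \to \cP^\circ$ is geometrically integral, so $\delta(C)$ is well defined as a function on $\cP^\circ$. The upper semicontinuity of $\delta$ in a flat family of integral curves (equivalently, lower semicontinuity of the geometric genus of the normalization) then shows that $X$ is closed and $V$ is open, so condition~(1) holds on $V$ by construction.

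For condition~(2), I would project $X$ via $p: \cP^\circ \to \cF_{11}^\circ$ and bound each fiber using the classical equigeneric dimension estimate on K3 surfaces: for any primitively polarized K3 surface $(S,L)$ of genus $g$ and any $\delta \geq 0$,
$$\dim\{C \in |L| : C \text{ is integral and } \delta(C) \geq \delta\} \leq g - \delta.$$
This is standard (see, e.g., Tannenbaum, or X.~Chen's papers on rational curves on K3 surfaces); it ultimately reflects $K_S = 0$ via a Ran--Beauville-type deformation argument comparing deformations of $C$ in $S$ with deformations of its normalization. Applied with $g = 11$ and $\delta = \delta_0 + 1$, each fiber of $p|_X$ has dimension at most $10 - \delta_0$, so
$$\dim X \leq \dim \cF_{11}^\circ + (10 - \delta_0) = 19 + (10 - \delta_0) = 29 - \delta_0,$$
which gives $\codim_{\cP^\circ} X \geq \delta_0 + 1 > \delta_0$. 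This establishes condition~(2) and the density of $V$.

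The main technical input is the classical per-fiber Severi bound; the remainder is semicontinuity and dimension counting. An alternative route --- using the injectivity of $T_{[S,C]}\cP^\circ \hookrightarrow \Ext^1(\Omega_C, \cO_C)$ established after Theorem \ref{main thm 1} to realize $\cP^\circ$ as locally immersed into the moduli of curves, and then pulling back the codimension-$\delta$ stratification of $\ov{\cM}_{11}$ by the number of nodes --- would have to contend with non-nodal singularities (in particular the cusps allowed by Lemma \ref{singularities with delta invariant one}) via stable reduction, making that approach considerably more delicate. The fiber-wise Severi estimate is cleaner and suffices.
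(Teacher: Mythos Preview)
Your approach is correct and takes a genuinely different route from the paper's.

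The paper works directly on the total space $\cP^\circ$: for each irreducible component $Z_i$ of the bad locus, it picks a general point $p=[S,C]$, forms the simultaneous normalization of the family of curves over $Z_i$, and bounds $\dim T_p Z_i$ via Ran's deformation theory of the normalization map $\nu_p\colon \wt{C}_p\to C_p$. The crucial input is the injectivity $T_{[S,C]}\cP^\circ \hookrightarrow \Ext^1(\Omega_C,\cO_C)$ from Theorem~\ref{main thm 1}; combined with the elementary estimate $\dim\Hom(\Omega_{C_p},\nu_{p*}\cO_{\wt{C}_p}/\cO_{C_p})\le 2\delta(C_p)$, this yields $\dim T_p Z_i \le (3\tilde g - 3) + 2\delta(C_p) = 30 - \delta(C_p)$.

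Your per-fiber argument bypasses Theorem~\ref{main thm 1} for this proposition entirely: it needs only Proposition~\ref{refinement of moduli of genus 11 K3} (integrality of fibers) together with the classical equigeneric bound on K3 surfaces. Both routes produce the identical numerical estimate $\dim\le 29-\delta_0$. The trade-off is that you are outsourcing the work to a result that, while known, is a bit more delicate than your citations suggest: the bound $\dim\{C\in|L|:\delta(C)\ge\delta\}\le g-\delta$ must hold for \emph{every} K3 surface and for \emph{arbitrary} planar singularities, not just generic K3s or nodal curves. Tannenbaum treats rational surfaces, and Chen's papers focus on rational curves or on generic K3s; the point one actually needs is that the torsion in the normal sheaf $N_f$ of the normalization map $f\colon\wt C\to S$ does not contribute to deformations of the image, so the tangent space to the equigeneric stratum injects into $H^0(\wt C,\omega_{\wt C}(-D))\subset H^0(\wt C,\omega_{\wt C})$, of dimension $\le\tilde g$. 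A reference such as Dedieu--Sernesi would be safer. The paper's approach, by contrast, is self-contained once Theorem~\ref{main thm 1} is established---and since that theorem is needed anyway for the open-embedding results later in Section~\ref{Section: Reduction to moduli of curve}, reusing it here costs nothing.
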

\begin{proof}
Denote
$$A:=\big\{ p\in \cP^\circ \mid p=[S,C] \text{ is a closed point, and }
C \text{ is an integral curve with } \delta(C)>\delta_0 \big\}$$
    and denote $Z$ the Zariski closure of $A$ in $\cP^\ci$. Take $V:=\cP^\circ-Z$, then it suffices to prove $\codim Z>\de_0$.\\
    \indent Suppose $Z_1,...,Z_m$ are the irreducible components of $Z$, and denote $Z_i'$ the smooth locus of $Z_i$. It suffices to prove $\codim Z_i'>\de_0$ for each $1\le i\le m$.\\
    \indent Denote $\cC_i:=\wt{\pi}^{-1}(Z_i')$, and $\nu:\wt{\cC}_i\to \cC_i$ its normalization. Then the singular locus of $\wt{\cC}_i$ has codimension at least $2$. In addition, $\nu$ is a birational morphism. Hence we can choose dense open subset $Z_i^\circ$ of $Z_i'$, such that $Z_i^\circ$ is smooth, $(\wt{\pi}\circ\nu)|_{Z_i^\circ}$ is a smooth morphism, and for any $p\in Z_i^\circ$, $\nu_p:\wt{C}_p\to C_p$ is birational, where $\wt{C}_p:=(\wt{\pi}\circ\nu)^{-1}(p)$ and $C_p:=\wt{\pi}^{-1}(p)$. Hence $\nu_p$ is normalization for any $p\in Z_i^\circ$.\\
    \indent We only need to prove $\codim Z_i^\circ>\de_0$. Since $A$ is Zariski dense in $Z$, we can choose a closed point $p\in Z_i^\circ$ such that $\de(C_p)>\de_0$.\\
    \indent For any tangent vector $\vp:\Spec\CC[t]/(t^2)\to Z_i^\circ$ at $p$, the pullbacks of $\cC_i,\wt{\cC}_i$ along $\vp$ give a first order deformation of $\nu_p$. By \cite[Proposition 3.1]{Ran}, the isomorphism classes of first order deformations of $\nu_p$ is isomorphic to the group
    $$\Ext ^1(\nu_p^*\Omega_{C_p}\to \Om_{\wt{C}_p}, \nu_p^*\cO_{C_p}\to \cO_{\wt{C}_p})$$
    defined in \cite[Section 2]{Ran}. By \cite[Equation (2.2)]{Ran}, we have long exact sequence
    \begin{align*}
         &\Hom (\Om_{\wt{C}_p},\cO_{\wt{C}_p})\oplus \Hom(\Om_{{C_p}},\cO_{{C_p}})\to \Hom_{\nu_p}(\Om_{C_p},\cO_{\wt{C}_p})\\
         \to &
          \Ext^1(\nu_p^*\Omega_{C_p}\to \Om_{\wt{C}_p}, \nu_p^*\cO_{C_p}\to \cO_{\wt{C}_p})\\
          \xrightarrow{(g,h)}& \Ext^1 (\Om_{\wt{C}_p},\cO_{\wt{C}_p})\oplus \Ext^1(\Om_{{C_p}},\cO_{{C_p}})\xrightarrow{p+q}  \Ext_{\nu_p}^1(\Om_{C_p},\cO_{\wt{C}_p}).
    \end{align*}
    Here, we define $\Hom_{\nu_p}(F,G):=\Hom(F,{\nu_p}_*G)$ for any sheaf $F\in \mathrm{QCoh}(C_p)$ and sheaf~$G\in \mathrm{QCoh}(\wt{C}_p)$. For each $F\in \mathrm{QCoh}(C_p)$, 
    define $\Ext_{\nu_p}(F,\cdot)$ the right derived functor of $\Hom_{\nu_p}(F,\cdot):\mathrm{QCoh}(\wt{C}_p)\to \mathbf{Ab}$. Since $\nu_p$ is affine, ${\nu_p}_*:\mathrm{QCoh}(\wt{C}_p)\to \mathrm{QCoh}({C_p})$ is exact. Hence $\Ext_{\nu_p}^i(F,G)\cong \Ext^i(F,{\nu_p}_*G)$ for any $i\ge 0$.\\
    \indent We have a morphism 
    $$f:T_pZ_i^\circ\to \Ext ^1(\nu_p^*\Omega_{C_p}\to \Om_{\wt{C}_p}, \nu_p^*\cO_{C_p}\to \cO_{\wt{C}_p})$$
    and from the construction in \cite{Ran}, $g\ci f$ and $h\ci f$ correspond to the first order deformations of $\wt{C}_p$ and $C_p$, respectively. By Theorem \ref{main thm 1}, $h\circ f$ is injective. \\
    \indent Therefore, 
    \begin{align*}
        \dim T_pZ_i^\circ= &\dim \im g\circ f+\dim \ker g\circ f\\
        \le &\dim \Ext^1 (\Om_{\wt{C}_p},\cO_{\wt{C}_p})+\dim \im (h\circ f)|_{\ker g\circ f}\\
        \le &\dim \Ext^1 (\Om_{\wt{C}_p},\cO_{\wt{C}_p})+\dim \ker (q:\Ext^1(\Om_{{C_p}},\cO_{{C_p}})\to \Ext_{\nu_p}^1(\Om_{C_p},\cO_{\wt{C}_p}))\\
        \le &\dim \Ext^1 (\Om_{\wt{C}_p},\cO_{\wt{C}_p})+\dim \Hom (\Om_{C_p},\nu_*\cO_{\wt{C}_p}/\cO_{C_p}).
    \end{align*}
    For each singular point $x\in C_p$, $\Omega_{C_p,x}$ is generated by 2 elements as an $\cO_{C_p,x}$-module. Therefore,
    $$\dim \Hom_{\cO_{C_p,x}} (\Om_{C_p,x},(\nu_*\cO_{\wt{C}_p}/\cO_{C_p})_x)\leq 2\operatorname{length}((\nu_*\cO_{\wt{C}_p}/\cO_{C_p})_x).$$
    Summing over all singular points $x\in C_p$, we get
    $$\dim \Hom (\Om_{C_p},\nu_*\cO_{\wt{C}_p}/\cO_{C_p})\leq 2\de(C_p).$$
    Hence,
    $$\dim T_pZ_i^\circ\le (3g(\wt{C}_p)-3)+2\delta(C_p)=30-\de(C_p)<30-\de_0.$$
    It follows that $\codim Z_i^\ci>\de_0$, which proves the proposition.
\end{proof}
Combining Lemma \ref{singularities with delta invariant one} and Proposition \ref{codimension of curve singularity geq its delta invariant}, we get the following corollary:
\begin{corollary}
    \label{singularities worse than node and cusp has codim at least 2}
    There exists a dense open subset $V\su \cP^\circ$ such that $\codim \cP^\circ\backslash V\ge2$, and for each closed point $[S,C]\in V$, $C$ is an integral smooth or nodal or cuspidal curve with at most one singularity. 
\end{corollary}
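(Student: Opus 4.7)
The plan is to apply Proposition \ref{codimension of curve singularity geq its delta invariant} with $\de_0 = 1$ and to combine its output with Lemma \ref{singularities with delta invariant one} to pin down the singularity type. Concretely, taking the open subset $V \su \cP^\circ$ produced by Proposition \ref{codimension of curve singularity geq its delta invariant} for the value $\de_0 = 1$, one obtains $\codim(\cP^\circ \backslash V) > 1$, hence $\codim(\cP^\circ \backslash V) \geq 2$, and for every closed point $[S,C] \in V$ the curve $C$ is integral with $\de(C) \leq 1$.

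Before invoking Lemma \ref{singularities with delta invariant one}, one must check its hypothesis that $C$ has only planar singularities. This is automatic in our setting: each $C$ is cut out on the smooth K3 surface $S$ by a single section of a line bundle, so at any point $p \in C$ the completed local ring $\widehat{\cO}_{C,p}$ is the quotient of the regular $2$-dimensional ring $\widehat{\cO}_{S,p} \cong \CC[[x,y]]$ by a single element; thus every singularity of $C$ is planar.

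With planarity established, one splits on $\de(C)$. If $\de(C) = 0$ then $C$ is smooth, since for an integral curve over $\CC$ the vanishing of $\de$ means the normalization map is an isomorphism. If $\de(C) = 1$ then Lemma \ref{singularities with delta invariant one} applies directly and yields that $C$ has exactly one node or one cusp and no other singularity. In either case $C$ meets the conclusion of the corollary. This entire deduction is a direct assembly of the two preceding results; no serious obstacle arises, and the only step requiring attention is the planarity check, which is immediate from $\dim S = 2$ and the smoothness of $S$.
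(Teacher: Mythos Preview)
Your proposal is correct and follows exactly the paper's approach: the paper simply states that the corollary is obtained by combining Lemma \ref{singularities with delta invariant one} and Proposition \ref{codimension of curve singularity geq its delta invariant}, and you have written out precisely that combination (with the helpful extra observation that curves on a smooth surface have only planar singularities, justifying the hypothesis of Lemma \ref{singularities with delta invariant one}).
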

\indent Since the curves parametrized by $V$ have only nodes and cusps as potential singularities, they naturally fit into the framework of pseudo-stable curves. We briefly recall the relevant definitions and properties of the moduli space of pseudo-stable curves.
\begin{definition}[\cite{Sch}]
     A connected curve $C$ of arithmetic genus $g \ge 2$ is called pseudo-stable if 
    \begin{enumerate}
        \item $C$ has only nodes and cusps as possible singularities.
        \item $C$ does not have elliptic tail. Here, an elliptic tail is an irreducible component $Z$ such that $p_a(Z)=1$ and $|Z \cap (C-Z)|=1$.
        \item $\omega_X$ is ample, or equivalently, for any irreducible component $Z$ with $p_a(Z)=0$, we have $|Z \cap (C-Z)|\ge 3$.
    \end{enumerate}
\end{definition}
\begin{theorem}[{\cite[Theorem 2.4]{BFMV} and \cite[Section 2]{HH}}]
\label{properties of moduli of p-stable curve}
    Assume $g \geq 3$. Define $\ov{\mathcal{M}}_{g}^p$ as the moduli stack of pseudo-stable curves of genus $g$. Then $\ov{\cM}_g^p$ is a smooth proper irreducible Deligne–Mumford stack of dimension $3g-3$, and its coarse moduli space $\ov{M}_g^p$ is an irreducible projective variety of dimension $3g-3$.
\end{theorem}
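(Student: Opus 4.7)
The plan is to follow the construction in Hassett--Hyeon \cite{HH}, which is summarized in BFMV \cite{BFMV}. The central tool is a divisorial contraction morphism $T\colon \ov{\cM}_g \to \ov{\cM}_g^p$ that replaces each elliptic tail of a stable curve with a cusp; once this morphism is in hand, the properties of $\ov{\cM}_g^p$ can be transported from the analogous properties of $\ov{\cM}_g$.

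First, I would verify that $\ov{\cM}_g^p$ is a Deligne--Mumford stack of dimension $3g-3$. Condition (3) in the definition forces $\om_C$ to be ample for every pseudo-stable $C$, which bounds $|\Aut(C)|$ and gives representability of the diagonal. Since pseudo-stable curves have only planar hypersurface singularities (nodes and cusps), a local computation shows $\Ext^2(\Omega_C,\cO_C)=0$ and $\dim\Ext^1(\Omega_C,\cO_C)=3g-3$; this gives smoothness of the deformation functor at each pseudo-stable $[C]$ and hence smoothness of $\ov{\cM}_g^p$ of dimension $3g-3$.

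Next, for properness and irreducibility, I would construct $T$ explicitly by a ``cusp-for-elliptic-tail'' replacement on families, which on the universal curve amounts to contracting every elliptic tail $E\su C$ with attaching node $p$ to a cusp. Since every pseudo-stable curve arises from some stable curve by this procedure, $T$ is surjective, so $\ov{\cM}_g^p$ is irreducible as the image of the irreducible stack $\ov{\cM}_g$. Properness follows from the valuative criterion: given a family of pseudo-stable curves over a punctured DVR, first apply stable reduction in $\ov{\cM}_g$ and then apply $T$ to obtain a pseudo-stable extension.

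The main obstacle is separatedness of $\ov{\cM}_g^p$, i.e.\ uniqueness of the pseudo-stable limit, which requires a careful analysis of infinitesimal isomorphisms near cusps; this is the technical heart of \cite{HH}, and accounts for the insistence that pseudo-stable curves have \emph{no} elliptic tails. Once this is established, Keel--Mori yields the coarse moduli space $\ov{M}_g^p$ as a proper algebraic space, and projectivity then follows either by showing that a suitable power of the Hodge class on $\ov{M}_g$ descends through $T$ to an ample class on $\ov{M}_g^p$, or by exhibiting $\ov{M}_g^p$ directly as a GIT quotient of Chow-semistable $n$-canonical curves as in \cite{Sch,HH}.
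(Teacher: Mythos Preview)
The paper does not give its own proof of this statement: it is quoted directly from the literature, with the citations \cite[Theorem 2.4]{BFMV} and \cite[Section 2]{HH} appearing in the theorem header, and the text moves on immediately to the next proposition. So there is no ``paper's proof'' to compare your proposal against.

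Your sketch is a faithful outline of the argument actually carried out in \cite{HH} and summarized in \cite{BFMV}: smoothness via the vanishing of $\Ext^2(\Omega_C,\cO_C)$ for curves with only nodes and cusps, the elliptic-tail contraction $T\colon \ov{\cM}_g\to\ov{\cM}_g^p$, irreducibility and properness pulled along $T$, and projectivity via the GIT description of \cite{Sch,HH}. One small historical correction: in those references projectivity is established first, via GIT, and the contraction $T$ is then constructed using the ample Hodge class; your order of presentation reverses this, but the logic is not circular since the stack-theoretic properties (DM, smooth, proper) can be checked independently. In any case, since the paper treats this as a black box, your proposal goes well beyond what the paper itself supplies.
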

\begin{proposition}
\label{coarse moduli of pesudostable curves is normal}
    Assume $g\ge3$. The coarse moduli space $\ov{M}_g^p$ is normal.
\end{proposition}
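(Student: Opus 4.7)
The plan is to deduce normality of $\ov{M}_g^p$ from the smoothness of the stack $\ov{\cM}_g^p$ via the general principle that coarse moduli spaces of smooth Deligne--Mumford stacks are normal. Since normality is an étale-local property, I would work étale locally on $\ov{\cM}_g^p$.

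First, I would invoke the local structure theorem for Deligne--Mumford stacks (a consequence of the Keel--Mori theorem): around any closed point $[C]\in \ov{\cM}_g^p$ with automorphism group $G=\Aut(C)$ (finite by pseudo-stability), there exists an étale neighborhood of the form $[U/G]\to \ov{\cM}_g^p$, with $U$ an affine scheme. Because $\ov{\cM}_g^p$ is smooth by Theorem \ref{properties of moduli of p-stable curve}, so is $[U/G]$, and hence $U$ itself is smooth. The induced map on coarse moduli spaces realizes $U/G$ as an étale neighborhood of $[C]$ in $\ov{M}_g^p$ (this follows from compatibility of coarse moduli with étale base change in the Keel--Mori setup).

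Second, I would apply the classical fact that if $G$ is a finite group acting on a normal Noetherian scheme $U$, then the quotient $U/G$ is normal (see e.g.\ \cite[tag 0BRA]{Stack} or the standard argument that $\cO_{U/G}$ is the $G$-invariants of the normal ring $\cO_U$, hence integrally closed). Since $U$ is smooth, it is normal, so $U/G$ is normal. This exhibits $\ov{M}_g^p$ as étale-locally the quotient of a smooth scheme by a finite group action, hence étale-locally normal.

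Finally, since $\ov{M}_g^p$ is irreducible (by Theorem \ref{properties of moduli of p-stable curve}) and étale-locally normal, it is normal. The only subtle point is ensuring the local identification of the coarse moduli space with $U/G$, which is a purely formal consequence of Keel--Mori; no genuine obstacle arises once Theorem \ref{properties of moduli of p-stable curve} is granted.
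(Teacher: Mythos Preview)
Your proposal is correct and follows essentially the same approach as the paper: both deduce normality from the local structure of the Deligne--Mumford stack $\ov{\cM}_g^p$ as a finite group quotient of a smooth affine, then use that the $G$-invariants of a normal ring are integrally closed. The paper phrases this via Olsson's local structure theorem (each affine open $\Spec A_i$ of the coarse space pulls back to $[\Spec B_i/G_i]$ with $B_i$ smooth, hence $A_i\cong B_i^{G_i}$ is normal), while you use the \'etale-local picture around a point and Keel--Mori compatibility; the substance is identical.
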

\begin{proof}
    Suppose $\ov{M}_g^p$ is covered by affine open subsets $\{\Spec A_i\}$. Then by \cite[Theorem 11.3.1]{Ols}, $\Spec A_i\times_{\ov{M}_g^p}\ov{\cM}_g^p=[\Spec B_i/G_i]$ for a ring $B_i$ and a reduced finite group $G_i$, such that $B_i$ is a finite $A_i$-module. Hence $A_i\cong B_i^{G_i}$.\\
    \indent Since $\ov{\mathcal{M}}_{g}^p$ is smooth, each ring $B_i$ is integral and integrally closed. Hence $A_i\cong  B_i^{G_i}$ is integrally closed. Therefore, $\ov{M}_g^p$ is normal.
\end{proof}
By Corollary \ref{singularities worse than node and cusp has codim at least 2}, the family $\wt{\pi}^{-1}(V)\to V$ induces a morphism $\phi_V:V\to \ov{\cM}_{11}^p$. We now analyze the fibers of $\phi_V$, showing that they are zero-dimensional.
\begin{lemma}
\label{fiber of phi has dim 0}
    For any closed point $x \in |\ov{\cM}_{11}^p|$, its residual gerbe $\mathcal{G}_x$ is a closed substack, and $\dim \phi_V^{-1}(\cG_x)=0$.
\end{lemma}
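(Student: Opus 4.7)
The plan is to treat the two assertions separately. For the claim that $\cG_x$ is a closed substack, I would invoke the general theory of residual gerbes on Deligne--Mumford stacks: since $\ov{\cM}_{11}^p$ is a DM stack of finite type over $\CC$ by Theorem~\ref{properties of moduli of p-stable curve}, every closed point of $|\ov{\cM}_{11}^p|$ admits a residual gerbe, and residual gerbes at closed points are automatically closed substacks (this is a general fact for DM stacks of finite type over a field).

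The core of the argument is the dimension bound. The strategy is to show that $\phi_V : V \to \ov{\cM}_{11}^p$ is unramified. Once this is known, pulling $\phi_V^{-1}(\cG_x) \to \cG_x$ back along the étale atlas $\Spec \CC \to \cG_x$ produces an unramified morphism of finite type to $\Spec\CC$, whose fibers are therefore $0$-dimensional; hence $\dim \phi_V^{-1}(\cG_x) = 0$. To establish unramifiedness, I would verify that the differential of $\phi_V$ is injective at every closed point $[S, C] \in V$. Since $\ov{\cM}_{11}^p$ is smooth (Theorem~\ref{properties of moduli of p-stable curve}) and classifies flat families of pseudo-stable curves, its tangent space at $[C]$ is the first-order deformation space $\Ext^1(\Omega_C, \cO_C)$; this description is standard deformation theory valid for any local complete intersection curve, in particular for curves whose only singularities are planar nodes or cusps. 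Under this identification, the differential $d\phi_V$ at $[S, C]$ is precisely the Kodaira--Spencer type map $T_{[S,C]}\cP^\circ \to \Ext^1(\Omega_C, \cO_C)$ classified by the family $\wt{\pi}$, which is injective by Theorem~\ref{main thm 1} applied after the shrinking of $\cF_{11}^\circ$ to $p(U)$ carried out just above. Since $V$ is smooth of dimension $30$ and $\Omega_{V/\ov{\cM}_{11}^p}$ is coherent, injectivity of the differential at every closed point yields $\Omega_{V/\ov{\cM}_{11}^p} = 0$ by Nakayama, so $\phi_V$ is unramified.

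The main obstacle will be the precise identification $T_{[C]}\ov{\cM}_{11}^p \cong \Ext^1(\Omega_C, \cO_C)$ in the presence of cuspidal singularities, together with the verification that $d\phi_V$ coincides with the deformation-theoretic map furnished by Theorem~\ref{main thm 1}. Both facts rest on the general functorial description of deformations of flat proper families of curves: the family $\wt{\pi}^{-1}(V) \to V$ is the one defining the classifying morphism $\phi_V$, and evaluation on $\Spec\CC[t]/(t^2)$-valued points expresses $d\phi_V$ as exactly the map appearing in the statement of Theorem~\ref{main thm 1}, so no further deformation-theoretic input is required beyond what has already been developed.
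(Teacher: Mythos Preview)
Your proposal is correct and takes essentially the same approach as the paper. Both arguments hinge on the injectivity of $T_{[S,C]}\cP^\circ \to \Ext^1(\Omega_C,\cO_C)$ from Theorem~\ref{main thm 1}: the paper argues by contradiction directly at the level of $\Spec\CC[t]/(t^2)$-valued points (a nontrivial tangent vector in $\phi_V^{-1}(\cG_x)$ would map to a nontrivial tangent vector in $\cG_x$, impossible since $\cG_x$ has an \'etale atlas by reduced points), while you package the same pointwise injectivity as unramifiedness of $\phi_V$ and invoke that unramified morphisms have discrete fibers---the logical content is identical.
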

\begin{proof}
     By Theorem \ref{properties of moduli of p-stable curve}, $\ov{\cM}_{11}^p$ is a Deligne–Mumford stack of finite type. Hence by \cite[tag 0H27]{Stack}, $\cG_x$ is a closed substack. From the construction of residual gerbe \cite[tag 06UI]{Stack}, we can see $\dim \cG_x=0$.\\
    \indent We have the Cartesian diagram
    \begin{center}
    \begin{tikzcd}
        \phi_V^{-1}(\mathcal{G}_x) \arrow[r,hook,"i'"]\arrow[d,"\phi_x"]\ar[dr, phantom, "\square"] & V \arrow[d,"\phi_V"]  & \\
        \mathcal{G}_x \arrow[r,hook,"i"] & \ov{\cM}_{11}^p.
    \end{tikzcd}
    \end{center}
    If $\dim \phi_V^{-1}(\mathcal{G}_x)>0$, then there exists a morphism $f:\Spec \mathbb{C}[t]/(t^2) \to \phi_V^{-1}(\mathcal{G}_x)$ which is nontrivial. Since $i'$ is a closed embedding, $i' \circ f$ is nontrivial. By Theorem \ref{main thm 1}, $\phi_V \circ i' \circ f$ is nontrivial. Hence $\phi_x \circ f$ is nontrivial.\\
    \indent Take an étale presentation $U \to \mathcal{G}_x$, then $U$ is reduced, finite type of dimension $0$. Hence $U$ is a union of finitely many reduced closed points. By the infinitesimal lifting criterion of étale morphism, there exists a morphism $g: \Spec \CC[t]/(t^2) \to U$ lifting~$\phi_x \circ f$. However, $g$ must be trivial, so is $\phi_x \circ f$, contradiction.
\end{proof}
\indent Using Lemma \ref{fiber of phi has dim 0}, we show that $V$ can be realized as an open subset of the coarse moduli space $\ov{M}_{11}^p$.
\begin{proposition}
\label{embed projective bundle into modili of pesudo-stable curve}
    Denote $\phi_V': V \to \ov{M}_{11}^p$ the composition of $\phi_V$ and the canonical morphism $\ov{\mathcal{M}}_{11}^p \to \ov{M}_{11}^p$, then $\phi_V'$ is an open embedding.
\end{proposition}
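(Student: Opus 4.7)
The plan is to verify the hypotheses of Zariski's Main Theorem (ZMT) in the form: a quasi-finite, separated, birational morphism to a normal Noetherian scheme is an open embedding. The three things I would need to check are quasi-finiteness of $\phi_V'$, birationality of $\phi_V'$, and normality of the target, the last of which is already Proposition \ref{coarse moduli of pesudostable curves is normal}. Separation is automatic since both $V$ and $\ov{M}_{11}^p$ are varieties.

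First I would verify quasi-finiteness. Because $\ov{\cM}_{11}^p$ is Deligne--Mumford of finite type, its coarse moduli map $\ov{\cM}_{11}^p \to \ov{M}_{11}^p$ is a bijection on closed points. Hence for a closed point $y \in \ov{M}_{11}^p$, there is a unique $x \in |\ov{\cM}_{11}^p|$ above $y$, and the set-theoretic preimage $(\phi_V')^{-1}(y)$ coincides with the set-theoretic preimage $\phi_V^{-1}(\cG_x)$. Lemma \ref{fiber of phi has dim 0} then gives $\dim (\phi_V')^{-1}(y)=0$, so $\phi_V'$ is quasi-finite.

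Next I would establish birationality. By construction just after diagram \eqref{commutative diagram}, there is a dense open $U \subset \cP^\circ$ (which we may assume meets $V$, since $V$ is dense) such that the rational map $\phi$ from Mukai's program restricts to an open embedding $\phi|_U \colon U \hookrightarrow \cM_{11}$. Because smooth genus $11$ curves are pseudo-stable and have no elliptic tails, $\cM_{11}$ is an open substack of $\ov{\cM}_{11}^p$, and $\phi_V$ restricted to $U\cap V$ factors through this open substack as an open embedding of stacks. Since $U\cap V$ is a scheme, the image is contained in the locus of $\cM_{11}$ with trivial automorphisms, where the coarse moduli map $\cM_{11}\to M_{11}$ is an isomorphism. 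Hence $\phi_V'|_{U\cap V}$ is an open embedding into $M_{11}\subset \ov{M}_{11}^p$, which makes $\phi_V'$ birational since $\dim V = 11 + 19 = 30 = \dim \ov{M}_{11}^p$.

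With quasi-finiteness, birationality, separation and normality of the target all in hand, Zariski's Main Theorem yields that $\phi_V'$ is an open embedding. The main subtlety I would expect to clarify is Step 1, namely carefully matching the closed fiber of the coarse moduli map with the residual gerbe $\cG_x$ so that Lemma \ref{fiber of phi has dim 0} applies; the rest is essentially formal once one invokes ZMT.
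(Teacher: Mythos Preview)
Your proposal is correct and follows essentially the same approach as the paper: both establish quasi-finiteness via Lemma \ref{fiber of phi has dim 0}, invoke birationality from Mukai's program, use Proposition \ref{coarse moduli of pesudostable curves is normal} for normality of the target, and conclude with Zariski's Main Theorem. The paper phrases ZMT as the factorization $V\hookrightarrow \ov{V}\xrightarrow{g}\ov{M}_{11}^p$ with $g$ finite and then shows $g$ is an isomorphism, whereas you invoke the packaged form directly, but the content is the same.
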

\begin{proof}
    By Theorem \ref{properties of moduli of p-stable curve}, $\ov{M}_{11}^p$ is a variety. So $\phi_V'$ is of finite type and separated.\\
    \indent For any closed point $x \in \ov{{M}}_{11}^p$, by \cite[tag 050B]{Stack}, $\phi_V|_{\phi_V'^{-1}(x)^{\mathrm{red}}}: \phi_V'^{-1}(x)^{\mathrm{red}} \to \ov{\mathcal{M}}_{11}^p$ factors through $\mathcal{G}_x$. So $\phi_V'^{-1}(x)^{\mathrm{red}}$ is a closed subscheme of $\phi_V^{-1}(\mathcal{G}_x)$. By Lemma \ref{fiber of phi has dim 0}, we have $\dim\phi_V'^{-1}(x)=0$.\\
    \indent Therefore, each fiber of $\phi_V'$ has dimension $0$. Since $\phi_V'$ is of finite type, $\phi_V'$ is quasi-finite.\\
    \indent By Zariski's Main Theorem, there exists a scheme $\ov{V}$ and commuting morphisms
    \begin{center}
    \begin{tikzcd}
        V \arrow[rr,hook,"j"]\arrow[dr,"\phi_V'"] && \ov{V} \arrow[dl,"g"]\\
        & \ov{M}_{11}^p &
    \end{tikzcd}
    \end{center}
    such that $j$ is an open embedding and $g$ is a finite morphism. Since $V$ is integral, we can replace $\ov{V}$ by the closure of $j(V)$ and assume $\ov{V}$ is integral.\\
    \indent Since $\phi_V'$ is birational, $g$ is birational. Using the fact that $g$ is finite, $\ov{V}$ is integral, and $\ov{M}_{11}^p$ is normal (Proposition \ref{coarse moduli of pesudostable curves is normal}), we get $g$ is an isomorphism. Hence $\phi_V'$ is an open embedding.
\end{proof}
Now we describe a birational morphism between the moduli of stable curves and the moduli of pseudo-stable curves, which is constructed in \cite{HH}. Recall that the divisor $\de_1$ in the moduli stack $\ov\cM_g$ is the locus of curves with elliptic tails, and the divisor $\De_1$ is the image of $\de_1$ under the natural morphism $\ov{\cM}_{g}\to \ov{M}_g$.
\begin{theorem}[{\cite[Theorem 1.1]{HH}}]
\label{divisorial contraction between moduli spaces}
     Assume $g\ge4$. Then there exists a morphism $\mathcal{T}: \ov{\cM}_{g}\to \ov{\cM}_{g}^p$. The morphism $\cT$ is an isomorphism on the complement of $\de_1$, and for each curve $C\in\de_1$, $\cT(C)$ is obtained by replacing each elliptic tail of $C$ with a cusp. Moreover, the induced morphism $\Upsilon: \ov {{M}}_{g} \to \ov {{M}}_{g}^p$ between coarse moduli spaces is a divisorial contraction. 
\end{theorem}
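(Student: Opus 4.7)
The plan is to construct $\cT$ as the universal ``elliptic-tail to cusp'' contraction and then read off the divisorial contraction statement from a dimension count on fibers of $\Upsilon$.

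First I would give a pointwise definition. Let $C$ be a stable curve of genus $g\ge 4$ with elliptic tails $E_1,\dots,E_k$ attached to the complementary curve $C':=\ov{C\setminus \bigsqcup_i E_i}$ at smooth points $p_1,\dots,p_k$. Define $\cT(C)$ to be the curve obtained from $C'$ by replacing each local ring $\cO_{C',p_i}\cong \CC[[t]]$ by its subring $\CC+\mathfrak m_{p_i}^2\cong \CC[[t^2,t^3]]$, producing an $A_2$-cusp at each $p_i$. Because a cusp has $\de$-invariant $1$ and each elliptic tail contributes $1$ to the arithmetic genus, $p_a(\cT(C))=g$ is preserved. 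By construction $\cT(C)$ has only nodes and cusps and no elliptic tails, and its dualizing sheaf is ample because the surgery only modifies $\om$ locally, trading a degree-$1$ contribution from the tail for a degree-$1$ contribution at the cusp; the check reduces to the fact that any $\PP^1$-bridge in $C$ already met the rest of $C$ at $\ge 3$ points.

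Next I would upgrade this to a morphism of stacks by performing the construction in families. Given $\pi:\cC\to S$ a family of stable curves, identify the closed substack $\cE\su \cC$ of relative elliptic tails, perform the fiberwise contraction $\cC\to \cC'$ that collapses $\cE$ to an attachment section $\sigma:\cE^{\mathrm{base}}\to \cC'$, and define $\cO_{\cT(\cC)}\su \cO_{\cC'}$ as the subsheaf agreeing with $\cO_{\cC'}$ away from $\sigma$ and equal to $\cO_S+\cI_\sigma^2$ near $\sigma$, where $\cI_\sigma$ is the ideal sheaf of $\sigma$. Étale-locally the total space is $\Spec \cO_S[[t]]$ and the cuspidal subring $\cO_S[[t^2,t^3]]$ is $\cO_S$-flat, yielding flatness and base-change compatibility. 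A cleaner alternative is to realize $\cT(\cC)$ as $\operatorname{Proj}_S$ of a log-twisted section ring of $\om_{\cC/S}$, where an appropriate small boundary makes the relative log canonical model contract elliptic tails to cusps; this identification is due to the log MMP for surfaces.

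Off $\de_1$ the stable curve has no elliptic tails, so the construction is the identity and $\cT$ restricts to an isomorphism onto the open substack of $\ov\cM_g^p$ parametrizing curves without cusps, giving the first claimed property. The pointwise description on $\de_1$ is built into Step 1. For the divisorial contraction statement, I would study the fibers of $\Upsilon:\ov M_g\to \ov M_g^p$: the fiber over a pseudo-stable curve with $k$ cusps parametrizes a choice of elliptic tail gluing at each cusp, i.e. a point of $(\ov M_{1,1})^k$, and since $\dim \ov M_{1,1}=1$ the fiber has dimension $k$. For a general point of $\De_1$ there is exactly one elliptic tail, so $\Upsilon|_{\De_1}$ is generically a $1$-dimensional fibration onto a $(3g-5)$-dimensional image in $\ov M_g^p$, and $\Upsilon$ is an isomorphism elsewhere; this is the definition of a divisorial contraction.

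The main obstacle is Step 2: verifying in full generality that the subsheaf construction yields a flat family of pseudo-stable curves and is functorial under base change, or equivalently that the log-twisted section ring is finitely generated with the expected fibers. The local analysis near $\sigma$ is the concrete point where one must make sure the cuspidal surgery glues over a base rather than just pointwise; the log-canonical-models route is conceptually cleaner but imports nontrivial MMP machinery.
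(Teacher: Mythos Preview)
The paper does not prove this statement at all: Theorem~\ref{divisorial contraction between moduli spaces} is quoted verbatim from \cite[Theorem~1.1]{HH} and used as a black box, with no argument supplied in the present paper. So there is no ``paper's own proof'' to compare against.

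That said, your sketch is a reasonable outline of what Hassett--Hyeon actually do in \cite{HH}. In particular, your ``cleaner alternative'' is exactly their method: they construct $\cT$ by taking the relative log canonical model $\operatorname{Proj}\bigoplus_{n\ge 0}\pi_*\big(n(K_{\cC/S}+\alpha\delta)\big)$ for a suitable boundary coefficient $\alpha$, and identify the resulting family as a family of pseudo-stable curves; the log MMP for surfaces handles flatness and base change in one stroke. Your first route, the ad hoc subsheaf $\cO_S+\cI_\sigma^2$, is not how the construction is carried out in \cite{HH}, and as you yourself flag, making that flat and functorial over an arbitrary base would require real work (the issue is not the local model $\cO_S[[t^2,t^3]]$ but rather controlling what happens where several elliptic tails or the locus $\cE$ degenerate). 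The fiber description and the dimension count for the divisorial contraction are correct.

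In short: nothing to correct relative to this paper, since the paper offers no proof; relative to the cited source, your log-canonical-model paragraph is on target and the subsheaf paragraph is a heuristic that \cite{HH} does not attempt to make rigorous.
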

Taking $g=11$, we see that $\Upsilon(\De_1)$ is a closed subset of codimension at least $2$. Take $V':=V-\phi_V'^{-1}(\Upsilon(\De_1))$, then by Proposition \ref{embed projective bundle into modili of pesudo-stable curve}, $\codim \cP^\circ\ba V'\ge2$, and $\wt{\pi}^{-1}(V')\to V'$ is a family of nodal curves. Hence we get a morphism $\psi: V'\to \ov{\cM}_{11}$ and the induced morphism $\psi':V'\to \ov{M}_{11}$, such that $\cT\circ\psi=\phi_V|_{V'}$, $\Upsilon\circ\psi'=\phi_V'|_{V'}$. Since $\wt{\pi}^{-1}(V')\to V'$ is a family of integral curves, $\im \psi'\cap \De_1=\emptyset$. So $\psi'$ is an open embedding.\\
\indent To relate $\wt{\pi}^{-1}(V')$ with $\ov{M}_{11,1}$, we need to shrink $V'$ to restrict ourselves to curves with trivial automorphism groups. The following lemma shows that we can do this while removing a high-codimension subset.
\begin{lemma}
\label{singular locus of V has codim at least 2}
    There exists a dense open subset $W\su V'$ such that $\codim V'\ba W\ge9$, and for each closed point $p\in W$, the curve $\wt{\pi}^{-1}(p)$ has trivial automorphism group. 
\end{lemma}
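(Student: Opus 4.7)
The plan is to use the open embedding $\psi':V'\hookrightarrow\ov{M}_{11}$ obtained in the paragraph after Theorem \ref{divisorial contraction between moduli spaces}, and to reduce the claim to a classical codimension estimate on the locus of curves admitting a non-trivial automorphism in the moduli of stable genus $11$ curves.

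More precisely, let $A\su\ov{M}_{11}$ denote the subset of points $[C]$ with $\Aut(C)\neq 1$. This is a closed subset of $\ov{M}_{11}$: any non-trivial automorphism has a non-trivial prime-order power, and the closed substacks of $\ov{\cM}_{11}$ parametrising curves that admit an automorphism of a fixed prime order $n$ have closed images in $\ov{M}_{11}$ since the coarse moduli morphism is proper. The key bound I need is $\codim_{\ov{M}_{11}}A\ge 9$.

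To establish it, I would run the standard Hurwitz-space dimension count. If a smooth curve $C$ of genus $11$ admits an automorphism $\si$ of prime order $n$, then $\pi_\si:C\to C/\langle\si\rangle$ is a cyclic cover of degree $n$; writing $g'$ for the genus of the quotient and $r$ for the number of branch points, Riemann--Hurwitz gives
\[
20=n(2g'-2)+r(n-1).
\]
Curves admitting such an automorphism vary in a family of dimension at most $3g'-3+r$, and a direct optimisation over the integer solutions $(n,g',r)$ of the constraint above yields the maximum value $2g-1=21$, attained by the hyperelliptic locus $(n,g',r)=(2,0,24)$. For the points of $V'$ that lie over the boundary stratum $\De_0$ of irreducible nodal curves, the same count applied to the genus-$10$ normalisation together with the two marked preimages of the node gives the bound $21$ again. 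In either case every component of $A$ has dimension at most $21$, hence codimension at least $9$ in $\ov{M}_{11}$.

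Setting $W:=V'\setminus(\psi')^{-1}(A)$ then produces a dense open subset of $V'$ whose closed complement has codimension at least $9$, and every closed point $p\in W$ has $\Aut(\wt{\pi}^{-1}(p))=1$ by construction, since $\psi'$ is an open embedding and codimension is preserved under open embeddings. The main difficulty is the uniform codimension estimate on $A$: while the bound on the smooth locus is classical, one also has to handle the nodal curves appearing in $V'$, and the Hurwitz-space count applied to the pointed normalisation provides a clean simultaneous treatment of both cases.
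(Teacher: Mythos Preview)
Your approach is essentially the paper's: both take $W$ to be $V'$ minus the pullback under $\psi'$ of the automorphism locus, handling the smooth and the one-nodal strata separately by passing to the genus-$10$ normalisation. The only difference is that for the codimension estimate the paper cites Cornalba's result that the automorphism locus in $M_g$ has codimension $g-2$ (giving $9$ for $g=11$ and $1+8=9$ on the nodal stratum via $M_{10}$), whereas you rederive the same bound by the Riemann--Hurwitz/Hurwitz-space dimension count.
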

\begin{proof}
    By \cite{Cor} (and its erratum \cite{Cor08}), for any $g\ge 3$, there exists a closed subset $S_g\subset M_g$ with codimension $g-2$, such that each closed point $x\in M_g\backslash S_g$ represents a curve with trivial automorphism group.\\
    \indent Consider the morphism $\xi:\ov{M}_{10,2}\to \ov{M}_{11}$ obtained by gluing the two marked points into a node. We have that $\xi$ surjects onto the canonical divisor $\De_0$, which is the locus of nodal curves, and $\xi(M_{10,2})$ is the dense open subset of $\De_0$ containing all integral curves with exactly one node. Consider the forgetful morphism $\pr:M_{10,2}\to M_{10}$. Then for any closed point $[C]\in \De_0\ba\xi(\mathrm{pr}^{-1}(S_{10}))$, its normalization $\wt{C}$ has trivial automorphism group. Hence $\Aut(C)$ is trivial.\\
    \indent By Corollary \ref{singularities worse than node and cusp has codim at least 2}, $\im \psi'\su M_{11}\cup \xi(M_{10,2})$. Take
    $$W:= V'\backslash \ov{\psi'^{-1}({S_{11}}\cup {\xi(\pr^{-1}(S_{10}))}}.$$
    Then for each closed point $p\in W$, the curve $\wt{\pi}^{-1}(C)$ has trivial automorphism group.\\
    \indent We have $\codim S_{11}=9$, and $\codim \xi(\pr^{-1}(S_{10}))=1+\codim_{M_{10}}S_{10}=9$. Hence $\codim V'\ba W\ge9$.
\end{proof}
Denote $\psi_W:W\to \ov{M}_{11}$ the restriction of $\psi'$ on $W$. Using Lemma \ref{singular locus of V has codim at least 2}, the pullback of the forgetful morphism $\pi_{11}:\ov{M}_{11,1}\to \ov{M}_{11}$ along $\psi_W$ is isomorphic to the family~$\wt{\pi}^{-1}(W)\to W$.\\
\indent In conclusion, we summarize the preceding constructions in Section \ref{Section: Reduction to moduli of curve} in the following theorem:
\begin{theorem}
\label{P11 is open subset of coarse noduli M11 after deleting codim 2 subsets}
    There exists a dense open subset $W\su \cP^\ci$ such that
    \begin{enumerate}
        \item The family $\wt{\pi}^{-1}(W)\to W$ is a flat family of nodal curves, which induces a commutative diagram
\begin{center}
    \begin{tikzcd}
    \wt{\pi}^{-1}(W)\arrow[r,"\wt{\psi}_W"]\arrow[d,"\wt{\pi}"] & \ov{M}_{11,1}\arrow[d,"\pi_{11}"]\\
    W\arrow[r,"\psi_W"]& \ov{M}_{11}.
\end{tikzcd}
\end{center}
        \item The diagram is Cartesian, and $\psi_W$, $\wt{\psi}_W$ are open embeddings.
        \item For any closed point $x\in W$, $\pi^{-1}(x)$ is an integral smooth or nodal curve with at most one node.
        \item $\codim \cP^\circ\backslash W\ge2$.
    \end{enumerate}
\end{theorem}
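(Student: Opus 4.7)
The plan is largely a bookkeeping exercise: I take $W$ to be the dense open subset of $V'$ produced by Lemma \ref{singular locus of V has codim at least 2}, and then verify the four listed properties by assembling results already established in this section.

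For property (3) and the nodal part of (1), I would combine Corollary \ref{singularities worse than node and cusp has codim at least 2} with the passage from $V$ to $V'$ made just before Lemma \ref{singular locus of V has codim at least 2}. Corollary \ref{singularities worse than node and cusp has codim at least 2} already ensures every fiber over $V$ is an integral curve that is smooth, nodal, or cuspidal with at most one singularity. Removing $\phi_V'^{-1}(\Upsilon(\De_1))$ kills the cuspidal fibers, since under the divisorial contraction of Theorem \ref{divisorial contraction between moduli spaces} the cusps in a pseudo-stable curve are exactly the images of the elliptic tails of its stable model. Flatness of $\wt\pi^{-1}(W)\to W$ is inherited from flatness of the $\PP^{10}$-bundle $\wt p$ by base change along the inclusion $W\hookrightarrow \cP^\ci$.

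For property (2), the hypothesis I lean on throughout is that every fiber over $W$ has trivial automorphism group (Lemma \ref{singular locus of V has codim at least 2}); this is precisely what allows the coarse moduli spaces $\ov M_{11}$ and $\ov M_{11,1}$ to behave, on the image of $W$, as if they were the moduli stacks. Concretely, the flat family of integral nodal curves $\wt\pi^{-1}(W)\to W$ — all of whose fibers have trivial automorphisms — is classified by a morphism to $\ov M_{11}$ that coincides with the restriction $\psi_W$ of the open embedding $\psi':V'\to \ov M_{11}$ (obtained from Proposition \ref{embed projective bundle into modili of pesudo-stable curve} together with Theorem \ref{divisorial contraction between moduli spaces}), and is therefore itself an open embedding. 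Pulling back $\pi_{11}:\ov M_{11,1}\to\ov M_{11}$ along $\psi_W$ recovers $\wt\pi^{-1}(W)\to W$ together with its tautological marked point, giving both the Cartesian square and the map $\wt\psi_W$; the latter is an open embedding as a base change of $\psi_W$.

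For property (4), I decompose $\cP^\ci\setminus W \subset (\cP^\ci\setminus V)\cup(V\setminus V')\cup(V'\setminus W)$ and bound each piece: the first has codimension at least $2$ by Corollary \ref{singularities worse than node and cusp has codim at least 2}, the second has codimension at least $2$ because $\phi_V'$ is an open embedding and $\Upsilon(\De_1)$ has codimension at least $2$ in $\ov M_{11}^p$ (Theorem \ref{divisorial contraction between moduli spaces}), and the third has codimension at least $9$ by Lemma \ref{singular locus of V has codim at least 2}. I expect the one genuine subtlety to be the Cartesian assertion in (2): since the square is between coarse moduli rather than stacks, some care is required to see that the base change of $\pi_{11}$ along $\psi_W$ recovers the actual family and not merely its coarse moduli. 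The triviality of automorphisms on $W$, built into the definition via Lemma \ref{singular locus of V has codim at least 2}, is exactly what eliminates this gap.
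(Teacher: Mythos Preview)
Your proposal is correct and follows essentially the paper's own route: the theorem is stated there as a summary of Section~\ref{Section: Reduction to moduli of curve}, and your assembly of Corollary~\ref{singularities worse than node and cusp has codim at least 2}, Proposition~\ref{embed projective bundle into modili of pesudo-stable curve}, Theorem~\ref{divisorial contraction between moduli spaces}, and Lemma~\ref{singular locus of V has codim at least 2} matches the paper's reasoning, including the key observation that trivial automorphisms over $W$ make the Cartesian square of coarse moduli spaces behave like the stacky one. One small slip: flatness of $\wt\pi^{-1}(W)\to W$ is not a base change of the $\PP^{10}$-bundle $\wt p$ (which targets $\cS_{11}^\circ$, not $\cP^\circ$); it follows instead from miracle flatness on smooth varieties, or from the fact that $\cC^\circ\subset\cS_{11}^\circ\times_{\cF_{11}^\circ}\cP^\circ$ is a relative Cartier divisor over $\cP^\circ$.
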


\section{Rationally connectedness and the tautological generation of $\CH^2(\ov{M}_{g,n})_\QQ$}
\label{Section: result on chow gp}
\indent The goal of this section is to prove Theorem \ref{CH2 is tautologically generated for rationally connected moduli space}, which is crucial for our proof of the generalized Franchetta conjecture in genus $11$ and is also of independent interest. The content of this section is self-contained.
\begin{theorem}
\label{CH2 is tautologically generated for rationally connected moduli space}
    Assume $2g-2+n>0$, and the coarse moduli space $\ov{M}_{g,n}$ is rationally connected. Then the second Chow group with rational coefficient $\CH^2(\ov{M}_{g,n})_\mathbb{Q}$ is generated by tautological classes.
\end{theorem}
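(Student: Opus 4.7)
The plan is to reduce, via a Bloch--Srinivas decomposition of the diagonal localized on the boundary, to pushforwards from the Picard group of the boundary, and then to invoke the Arbarello--Cornalba theorem on Picard groups of moduli of curves. First, I would pass to the moduli stack: since $\ov{\cM}_{g,n}$ has finite stabilizers, the coarse moduli map induces an isomorphism $\CH^*(\ov{M}_{g,n})_\QQ\cong\CH^*(\ov{\cM}_{g,n})_\QQ$ identifying the tautological subrings, so it suffices to prove $\CH^2(\ov{\cM}_{g,n})_\QQ$ is tautologically generated.

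Next, I would use rational connectedness together with excision to set up a Bloch--Srinivas decomposition. Rational connectedness gives $\CH_0(\ov{M}_{g,n})_\QQ=\QQ$, and combined with the excision sequence
\[
\CH_0(\partial)_\QQ\to\CH_0(\ov{M}_{g,n})_\QQ\to\CH_0(\cM_{g,n})_\QQ\to 0
\]
and non-emptiness of the boundary, this forces $\CH_0(\cM_{g,n})_\QQ=0$. The latter vanishing allows one to realize the rational equivalence between the generic point of $\ov{\cM}_{g,n}$ and a fixed closed point by a 1-cycle supported entirely in $\cM_{g,n}$, and spreading this out yields a decomposition
\[
N[\Delta_{\ov{\cM}_{g,n}}]=N\{pt\}\times\ov{\cM}_{g,n}+Z
\]
with $Z$ supported on $\ov{\cM}_{g,n}\times\partial\ov{\cM}_{g,n}$. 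Applying this identity as a correspondence, for any $\alpha\in\CH^2(\ov{\cM}_{g,n})_\QQ$ the first term kills positive-codimension classes and $Z$ factors through pushforward from the normalization of the boundary, giving $N\alpha=\iota_*(\beta)$ for some $\beta\in\Pic(\wt\partial)_\QQ$, where $\iota:\wt\partial\to\ov{\cM}_{g,n}$ is the natural map.

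The normalization $\wt\partial$ is a disjoint union of $\ov{\cM}_{g-1,n+2}$ (the $\delta_0$ component, obtained by gluing two marked points into a node) and products $\ov{\cM}_{g_1,n_1+1}\times\ov{\cM}_{g_2,n_2+1}$ (the remaining boundary components). By the Arbarello--Cornalba theorem, $\Pic(\ov{\cM}_{g',n'})_\QQ$ is generated by tautological classes ($\lambda$, the $\psi_i$, and boundary divisors), so the same holds for $\Pic(\wt\partial)_\QQ$; since the tautological ring is closed under pushforward along gluing maps (Graber--Pandharipande), $\iota_*(\beta)$ lies in $R^2(\ov{\cM}_{g,n})_\QQ$. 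Hence $\CH^2(\ov{\cM}_{g,n})_\QQ$ is tautologically generated.

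The main obstacle will be the localized Bloch--Srinivas step: the classical statement only asserts the existence of some divisor $D$ with $Z\subset\ov{\cM}_{g,n}\times D$, whereas for the argument above we require $D\subset\partial$. Justifying this refinement, by revisiting the spreading-out of the rational equivalence in the proof of Bloch--Srinivas and exploiting the specific input $\CH_0(\cM_{g,n})_\QQ=0$ to carry the spreading out over $\cM_{g,n}$ rather than over an arbitrary dense open, is the most delicate technical point of the argument.
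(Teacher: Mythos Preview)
Your refined Bloch--Srinivas decomposition is not merely delicate to justify---it is false in the cases of interest. Apply the claimed identity $N[\Delta]=N[\{pt\}\times\ov\cM_{g,n}]+Z$ (with $Z$ supported on $\ov\cM_{g,n}\times\partial\ov\cM_{g,n}$) as a correspondence to a class $\alpha\in\CH^1(\ov\cM_{g,n})_\QQ$ rather than $\CH^2$: the same computation you outline gives $N\alpha=\iota_*(\gamma)$ for some $\gamma\in\CH^0(\wt\partial)_\QQ$, which would force every divisor class to be a rational linear combination of boundary divisors. But for $g\ge 3$ the classes $\lambda,\psi_1,\dots,\psi_n$ together with the boundary divisors are linearly independent in $\Pic(\ov\cM_{g,n})_\QQ$, so $\lambda$ is not such a combination and the decomposition cannot hold. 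In particular it fails for $(g,n)=(11,1)$.

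Your diagnosis of the obstacle is also off. Spreading out the relation $N[\delta_\eta]=0$ in $\CH_0((\cM_{g,n})_{k(\eta)})$ does pin the support in the \emph{second} factor to $\partial$, but the spreading is carried out over opens of the \emph{first} factor and inevitably leaves a residual term supported on $D'\times\ov\cM_{g,n}$ for an uncontrolled divisor $D'$. That term acts on $\CH^2$ through $\CH^2$ of a resolution of $D'$, not through a Picard group, and working inside $\cM_{g,n}$ on the second factor does nothing to eliminate it.

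For comparison, the paper keeps the standard Bloch--Srinivas decomposition (with an arbitrary divisor $D$) and proceeds by a different route: it passes to a smooth projective Galois cover $p:X\to\ov M_{g,n}$ with group $G$, shows via the Abel--Jacobi map together with $H^3(\ov\cM_{g,n},\CC)=0$ that $\CH^2(X)_{\alg}^G$ is torsion, and then uses the decomposition on $X$ to reduce $\CH^2_{\hom}$ to $\CH^2_{\alg}$. The tautological input at the end is that $H^4(\ov M_{g,n},\QQ)$ is tautologically generated (Canning--Larson--Payne), not the Picard groups of boundary strata.
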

\textbf{Convention.} Throughout this section, we work under the assumptions mentioned in Theorem \ref{CH2 is tautologically generated for rationally connected moduli space}. \\
\\
\indent To prove Theorem \ref{CH2 is tautologically generated for rationally connected moduli space}, we begin with the decomposition of the diagonal of $\overline{M}_{g,n}$.
\begin{proposition}
\label{decomposition of diagonal of coarse moduli}
     There exist closed subschemes $\Gamma_1,\Gamma_2\subset \ov{M}_{g,n} \times \ov{M}_{g,n}$ such that
     \begin{enumerate}
         \item $\Gamma_1=\{x\} \times \ov{M}_{g,n}$ for some closed point $x \in \ov{M}_{g,n}$, and $\Gamma_2\subset \ov{M}_{g,n} \times D$ for some codimension $1$ closed subscheme $D \subset \ov{M}_{g,n}$.
         \item There exists an integer $N$ such that 
     $$N[\Delta]=[\Gamma_1]+[\Gamma_2] \in \CH^{3g-3+n}(\ov{M}_{g,n} \times \ov{M}_{g,n})$$
     where $\Delta$ is the diagonal.
     \end{enumerate}
\end{proposition}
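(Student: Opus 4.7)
The plan is to apply the classical Bloch--Srinivas decomposition of the diagonal, taking rational connectedness of $\ov{M}_{g,n}$ as the key input. Fix a closed point $x\in\ov{M}_{g,n}$ and consider the cycle class $[\De] - [\{x\}\times \ov{M}_{g,n}]$ in $\CH^{3g-3+n}(\ov{M}_{g,n}\times\ov{M}_{g,n})$. I aim to show that some positive integer multiple $N$ of this class is supported on a subscheme of the form $\ov{M}_{g,n}\times D$ for a codimension one closed subscheme $D\su \ov{M}_{g,n}$, which immediately gives the desired decomposition (absorbing the factor of $N$ on the $\{x\}\times\ov{M}_{g,n}$ term into the cycle $\Ga_1$).

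To execute this, let $K=\CC(\ov{M}_{g,n})$ be the function field and consider the restriction of $[\De] - [\{x\}\times \ov{M}_{g,n}]$ to the generic fiber of the second projection, namely to $(\ov{M}_{g,n})_K:=\ov{M}_{g,n}\times_\CC \Spec K$. This restriction is the class $[\eta] - [x_K]\in \CH_0((\ov{M}_{g,n})_K)$, where $\eta$ is the diagonal $K$-point (corresponding to the generic point of $\ov{M}_{g,n}$) and $x_K$ is the base change of $x$. Since $\ov{M}_{g,n}$ is rationally connected over $\CC$, the base change to any algebraically closed extension $\ov K/K$ remains rationally connected, so $\CH_0((\ov{M}_{g,n})_{\ov K})_\QQ=\QQ$ and in particular $[\eta_{\ov K}]=[x_{\ov K}]$ rationally. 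A standard descent argument, passing through finite subextensions of $\ov K/K$ and taking norms, shows that $N([\eta] - [x_K])=0$ in $\CH_0((\ov{M}_{g,n})_K)$ for some positive integer $N$.

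The final step is the localization exact sequence for Chow groups. Taking the colimit over Zariski open subsets $U\su \ov{M}_{g,n}$ in
$$\CH^{3g-3+n}\bigl(\ov{M}_{g,n}\times(\ov{M}_{g,n}-U)\bigr) \to \CH^{3g-3+n}(\ov{M}_{g,n}\times\ov{M}_{g,n}) \to \CH^{3g-3+n}(\ov{M}_{g,n}\times U) \to 0,$$
and using that $\varinjlim_U \CH^{3g-3+n}(\ov{M}_{g,n}\times U)\cong \CH_0((\ov{M}_{g,n})_K)$, I conclude that the kernel of the generic restriction is generated by classes of cycles supported on $\ov{M}_{g,n}\times D_i$ for finitely many divisors $D_i\su \ov{M}_{g,n}$. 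Applying this to $N\cdot([\De]-[\{x\}\times \ov{M}_{g,n}])$, which vanishes generically by the previous step, produces a cycle $\Ga_2$ supported on $\ov{M}_{g,n}\times D$ with $D:=\bigcup_i D_i$ such that $N[\De]=[\Ga_1]+[\Ga_2]$, where $\Ga_1$ is the cycle $N(\{x\}\times\ov{M}_{g,n})$ (supported on the scheme $\{x\}\times\ov{M}_{g,n}$).

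The main obstacle is the descent step from $\ov K$ back to $K$: one must control the integer $N$ and ensure that the rational connectedness hypothesis, which is a geometric condition over $\CC$, indeed transfers to triviality of $\CH_0$ over function fields. This is handled by the observation that rational chains realizing the equality $[\eta_{\ov K}]=[x_{\ov K}]$ with $\QQ$-coefficients are defined over some finite extension $L/K$, after which the pushforward along $\Spec L\to \Spec K$ multiplies by $[L:K]$, providing the required integer $N$.
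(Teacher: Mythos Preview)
Your proposal is correct and follows essentially the same route as the paper: both use rational connectedness of $\ov{M}_{g,n}$ (and of its base changes to algebraically closed extensions) to force $\CH_0$ to be trivial over the function field, and then invoke the Bloch--Srinivas decomposition of the diagonal. The only difference is packaging: the paper cites \cite[Proposition 1]{BS} directly (noting that its proof goes through even though $\ov{M}_{g,n}$ is not smooth), whereas you unpack that argument via the localization sequence and descent from $\ov K$ to $K$; since none of these steps uses smoothness, your version implicitly handles the same point.
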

\begin{proof}
    For any field extension $\mathbb{C} \subset \Omega$ such that $\Om$ is algebraically closed, the base change~$(\ov{M}_{g,n})_\Omega$ is rationally connected. By \cite[Proposition 4.3.3(2)]{Kol} and \cite[Proposition 4.3.13(1)]{Kol}, we have $\CH_0((\ov{M}_{g,n})_\Omega)=\mathbb{Z}$.\\
    \indent The proposition then follows from \cite[Proposition 1]{BS}. Although $\ov{M}_{g,n}$ is not smooth, the proof of \cite[Proposition 1]{BS} still works for $\ov{M}_{g,n}$.
\end{proof}
By \cite[Corollary 2.10]{BP}, there exists a finite Galois cover $p:X \to \ov{M}_{g,n}$ such that $X$ is a smooth projective variety. Denote $G:=\Aut (X/\ov{M}_{g,n})$.\\
\indent The morphism $q:X \times X \to \ov{M}_{g,n} \times \ov{M}_{g,n}$ induced by $p$ is a flat morphism. Hence we get $q^*[\De]=q^*[\Gamma_1]+q^*[\Gamma_2] \in \CH^{3g-3+n}(X \times X)$. For each element $g\in G$, we denote $\De_X^g\subset X\times X$ the graph of the automorphism $g:X\to X$. Take a dense open subset~$U\subset \ov{M}_{g,n}$ such that $G$ acts freely on $p^{-1}(U)$. Then we have 
$$ q^{-1}(\De)\cap(p^{-1}(U)\times p^{-1}(U))=(\bigcup_{g\in G} \De_X^g)\cap (p^{-1}(U)\times p^{-1}(U)).$$
Since the generic point of each irreducible component of $q^{-1}(\De)$ is in $p^{-1}(U)\times p^{-1}(U)$, we have 
$[q^{-1}(\De)]=\sum_{g\in G}[\De_X^g]\in \CH^{3g-3+n}(X\times X)$.\\
\indent Therefore, we have the following corollary:
\begin{corollary}
\label{G-equivariant decomposition of diagonal}
    There exists an integer $N$ such that we have a decomposition $$N\sum_{g \in G} [\Delta_X^g]=[\Gamma_1']+[\Gamma_2'] \in \CH^{3g-3+n}(X \times X)$$
    where $\Gamma_1'=V'\times X$ for some closed subscheme $V'$ of dimension $0$, and $\Gamma_2' \subset X \times D'$ for some codimension $1$ closed subscheme $D'\subset X$.
\end{corollary}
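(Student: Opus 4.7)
Given the preparation laid out immediately before the corollary statement, the proof reduces to a direct flat-pullback argument. The plan is to apply $q^*$ to the decomposition $N[\De]=[\Ga_1]+[\Ga_2]$ coming from Proposition \ref{decomposition of diagonal of coarse moduli}, and then identify each of the three resulting pieces as a cycle of the desired shape on $X\times X$.

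For the left-hand side, the paragraph preceding the corollary already established that $q^*[\De]=[q^{-1}(\De)]=\sum_{g\in G}[\De_X^g]$, so $q^*(N[\De])=N\sum_{g\in G}[\De_X^g]$. For $\Ga_1=\{x\}\times \ov{M}_{g,n}$, since $p$ is finite the scheme-theoretic fiber $p^{-1}(x)$ is zero-dimensional; I would set $V':=p^{-1}(x)$ and observe that $q^{-1}(\Ga_1)=V'\times X$. Taking $\Ga_1':=V'\times X$ then gives $[\Ga_1']=q^*[\Ga_1]$ by flat pullback, with any multiplicities arising from the pullback absorbed into the (possibly non-reduced) scheme structure of $V'$. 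For $\Ga_2\su \ov{M}_{g,n}\times D$, set $D':=p^{-1}(D)$; since $p$ is finite surjective and $D$ has codimension $1$ in $\ov{M}_{g,n}$, the subscheme $D'\su X$ has codimension $1$. Then $q^{-1}(\Ga_2)\su X\times D'$, and I would take $\Ga_2':=q^{-1}(\Ga_2)$, which is a closed subscheme of $X\times D'$.

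Combining these three identifications yields $N\sum_{g\in G}[\De_X^g]=[\Ga_1']+[\Ga_2']$ in $\CH^{3g-3+n}(X\times X)$, which is exactly the asserted decomposition. The only genuinely subtle point is the bookkeeping of multiplicities in the flat pullback $q^*$; however, these multiplicities are positive integers attached to irreducible components of $q^{-1}(\Ga_1)$ and $q^{-1}(\Ga_2)$, so they can be encoded in non-reduced structures on $V'$ and $\Ga_2'$ (or, if preferred, absorbed by replacing $N$ with a uniform integer multiple). No substantive obstacle arises beyond this clerical issue, so this plan should yield the corollary directly.
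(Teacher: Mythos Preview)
Your proposal is correct and matches the paper's approach exactly: the paper does not give a separate proof of the corollary but derives it from the paragraph immediately preceding it, which is precisely the flat-pullback argument you describe (applying $q^*$ to the decomposition of Proposition \ref{decomposition of diagonal of coarse moduli}, using the already-established identity $q^*[\De]=\sum_{g\in G}[\De_X^g]$, and taking $V'=p^{-1}(x)$, $D'=p^{-1}(D)$). Your remarks on absorbing pullback multiplicities into the scheme structures of $V'$ and $\Ga_2'$ are a reasonable way to make explicit what the paper leaves implicit.
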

Now we denote $\Phi:\CH^2(X)_{\alg} \to J^3(X)$ the restriction of the Abel-Jacobi map $\CH^2(X)_{\hom} \to J^3(X)$ on $\CH^2(X)_{\alg}$. We will show $\CH^2(X)_{\alg}^G\subset \CH^2(X)_{\tor}$ using the map $\Phi$.
\begin{lemma}
\label{ker(Phi) is torsion}
    $\ker(\Phi) \cap \CH^2(X)_{\alg}^G\subset \CH^2(X)_{\tor}$.
\end{lemma}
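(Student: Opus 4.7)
The plan is to mimic the Bloch--Srinivas argument $G$-equivariantly, using Corollary \ref{G-equivariant decomposition of diagonal} in place of a decomposition of the diagonal of $X$ (which we do not have, since $X$ itself need not have trivial $\CH_0$). First I would let both sides of $N\sum_{g\in G}[\Delta_X^g]=[\Gamma_1']+[\Gamma_2']$ act as correspondences on $\alpha\in\CH^2(X)_{\alg}^G$. The $G$-invariance of $\alpha$ gives $N\sum_{g\in G}(\Delta_X^g)_*\alpha = N|G|\alpha$, while the contribution of $\Gamma_1'=V'\times X$ vanishes: since $V'$ is zero-dimensional, $\alpha\cdot[V']\in\CH^{\dim X+2}(X) = 0$, so $(\Gamma_1')_*\alpha = 0$. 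Thus $N|G|\alpha = (\Gamma_2')_*\alpha$. Since $\Gamma_2'\subset X\times D'$, passing to a resolution of singularities $\sigma:\widetilde{D'}\to D'$ and lifting $\Gamma_2'$ to a cycle $\widetilde\Gamma\in\CH^*(X\times\widetilde{D'})$ yields
$$N|G|\alpha = \tau_*\widetilde\beta,$$
where $\tau := \iota\circ\sigma:\widetilde{D'}\to X$ and $\widetilde\beta := \widetilde\Gamma_*\alpha\in\CH^1(\widetilde{D'})$.

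Next, algebraic equivalence is preserved by correspondences, so $\widetilde\beta\in\Pic^0(\widetilde{D'})$. Using the identification $\Pic^0(\widetilde{D'}) = J^1(\widetilde{D'})$ and the compatibility of Abel--Jacobi with proper pushforward, the hypothesis $\Phi(\alpha)=0$ translates into
$$0 = \Phi(\tau_*\widetilde\beta) = \tau_{J*}(\widetilde\beta) \in J^3(X),$$
where $\tau_{J*}:J^1(\widetilde{D'})\to J^3(X)$ is the induced morphism on intermediate Jacobians. Hence $\widetilde\beta$ lies in $\ker\tau_{J*}$.

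The main obstacle is to deduce that $\tau_*\widetilde\beta$ is torsion in $\CH^2(X)$. The kernel $K$ of $\tau_{J*}$ is a closed subgroup of the abelian variety $\Pic^0(\widetilde{D'})$ with identity component $K^0$ an abelian subvariety and $K/K^0$ finite, so after multiplying by a fixed integer one may assume $\widetilde\beta\in K^0$. The plan is to argue that the regular homomorphism $\beta\mapsto\tau_*\beta$ from $K^0$ to $\CH^2(X)_{\alg}$ factors through Murre's algebraic representative $A^2(X)$, and that the $G$-equivariant decomposition---playing the role of the Bloch--Srinivas hypothesis in our setting---forces the natural map $A^2(X)\to J^3(X)$ to be an isogeny onto its image. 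Connectedness of $K^0$ then implies its image in $A^2(X)$ is trivial, so $\tau_*\widetilde\beta$ lies in the kernel of $\phi:\CH^2(X)_{\alg}\to A^2(X)$, which a further application of the equivariant decomposition shows to be torsion. This would give that $N|G|\alpha$, and therefore $\alpha$, is torsion. The delicate point is carrying out the Bloch--Srinivas style argument $G$-equivariantly, since $X$ itself is not assumed to have trivial $\CH_0$.
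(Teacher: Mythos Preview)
Your opening moves are correct and match the paper: act by both sides of Corollary~\ref{G-equivariant decomposition of diagonal} on $\alpha$, note $(\Gamma_1')_*\alpha=0$, and write $N|G|\alpha=\tau_*\widetilde\beta$ with $\widetilde\beta=\widetilde\Gamma_*\alpha\in\Pic^0(\widetilde{D'})$. The problem is what comes next.

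The detour through $K^0=(\ker\tau_{J*})^0$ does no work. Your conclusion from it is that $\tau_*\widetilde\beta\in\ker\phi$; but $\tau_*\widetilde\beta$ is an integer multiple of $\alpha$, and $\alpha\in\ker\Phi=\ker\phi$ was the hypothesis, so nothing has been gained. The ``further application of the equivariant decomposition'' you then invoke to show this kernel element is torsion is the crux, and you have not said what it is. If it means iterating the same steps, the argument never terminates. If it means showing that $[\Gamma_2']_*$ annihilates $\ker\Phi$, that is exactly the one ingredient your proposal lacks. (Incidentally, ``the regular homomorphism $\beta\mapsto\tau_*\beta$ from $K^0$ to $\CH^2(X)_{\alg}$'' reverses the definition: regular homomorphisms go from Chow groups to abelian varieties, and universality is applied in that direction.)

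The paper supplies the missing ingredient directly, and once it is in place the $K^0$ analysis becomes unnecessary. By Murre's theorem (with Kahn's correction), $\Phi$ is the \emph{universal} regular homomorphism for $\CH^2(X)_{\alg}$. The map $\widetilde\Gamma_*:\CH^2(X)_{\alg}\to\Pic^0(\widetilde{D'})$ is itself a regular homomorphism, hence factors as $\psi\circ\Phi$ for some morphism $\psi$. Thus $\widetilde\beta=\widetilde\Gamma_*\alpha=\psi(\Phi(\alpha))=0$ outright---not merely $\widetilde\beta\in\ker\tau_{J*}$---and $N|G|\alpha=\tau_*(0)=0$. Phrased as the paper does: the entire endomorphism $N\sum_{g\in G}(\Delta_X^g)_*$ of $\CH^2(X)_{\alg}$ factors through $\Phi$, and on $G$-invariants it acts as multiplication by $N|G|$. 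This is the Bloch--Srinivas Theorem~1(i) argument transported verbatim; the only $G$-equivariant input is reading off the action on invariants.
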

\begin{proof}
    By Corollary \ref{G-equivariant decomposition of diagonal}, $N\sum_{g\in G} [\Delta_X^g]_*=[\Gamma_1']_*+[\Gamma_2']_*$ as actions on $\CH^2(X)$. By \cite[Theorem A]{Mur} and \cite[Theorem C]{Mur} (which has an error and was corrected by \cite{Kah}), $\Phi$ is the universal regular homomorphism of $\CH^2(X)_{\alg}$.\\
    \indent Using the same argument as in \cite[Theorem 1(i)]{BS}, the map 
    $$N\sum_{g\in G} [\Delta_X^g]_*:\CH^2(X)_{\alg}\to \CH^2(X)_{\alg}$$
    factors through $\Phi$. Since the restriction of $N\sum_{g\in G} [\Delta_X^g]_*$ on $\CH^2(X)_{\alg}^G$ is multiplication by $N|G|$, the proposition follows.
\end{proof}
\begin{proposition}
\label{CH^2_alg is torsion}
    $\CH^2(X)_{\alg}^G\subset \CH^2(X)_{\tor}$.
\end{proposition}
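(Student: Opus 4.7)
My plan is to combine Lemma \ref{ker(Phi) is torsion} with the push--pull identity for the Galois cover $p \colon X \to \ov{M}_{g,n}$ in order to descend the problem to $\ov{M}_{g,n}$ itself, and then to appeal to Bloch--Srinivas representability on a smooth resolution.

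Given $\alpha \in \CH^2(X)_{\alg}^G$, the identity $p^*p_* = \sum_{g \in G} g^*$ together with $G$-invariance yields $|G|\alpha = p^*(p_*\alpha)$ in $\CH^2(X)_\QQ$, and since $p_*$ preserves algebraic triviality, $p_*\alpha \in \CH^2(\ov{M}_{g,n})_{\alg}$. Hence it is enough to show $\CH^2(\ov{M}_{g,n})_{\alg} \otimes \QQ = 0$. Fixing a resolution of singularities $\pi \colon \tilde Y \to \ov{M}_{g,n}$, the fact that $\ov{M}_{g,n}$ has only finite quotient (hence rational) singularities makes $\pi^*$ injective on rational Chow groups (via $\pi_*\pi^* = \id$) and compatible with algebraic equivalence. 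Since rational connectedness is birationally invariant among smooth projective varieties, $\tilde Y$ is itself smooth projective rationally connected, and the problem reduces to showing $\CH^2(\tilde Y)_{\alg} \otimes \QQ = 0$.

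For the latter, Bloch--Srinivas representability gives that for smooth projective $\tilde Y$ with $\CH_0(\tilde Y) = \ZZ$ (implied by rational connectedness), the Abel--Jacobi map induces an isomorphism $\CH^2(\tilde Y)_{\alg} \otimes \QQ \cong J^3(\tilde Y) \otimes \QQ$, and $h^{3,0}(\tilde Y) = 0$ (which also follows from rational connectedness) ensures $J^3(\tilde Y)$ is an abelian variety. It therefore remains to show $J^3(\tilde Y) = 0$, equivalently $H^3(\tilde Y, \QQ) = 0$. This cohomological vanishing is the main obstacle: it cannot be deduced from rational connectedness alone---cubic threefolds provide classical counterexamples among general RC varieties---so the argument must exploit specific features of the moduli space of curves, invoking known vanishing results for $H^3(\ov{M}_{g,n}, \QQ)$ in the rationally connected case (via tautological cohomology results in the style of Arbarello--Cornalba and successors) combined with birational invariance through the resolution $\pi$.
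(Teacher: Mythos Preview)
Your reduction via push--pull to the statement $\CH^2(\ov{M}_{g,n})_{\alg}\otimes\QQ=0$ is correct and is in fact equivalent to the proposition. The gap is in the last step: you assert that $H^3(\tilde Y,\QQ)=0$ follows from $H^3(\ov{M}_{g,n},\QQ)=0$ by ``birational invariance through the resolution $\pi$'', but $H^3$ is \emph{not} a birational invariant. A resolution blows up strata of the singular locus, and the exceptional divisors contribute $H^1$ of the centers to $H^3(\tilde Y)$; the decomposition theorem only tells you that $IH^3(\ov{M}_{g,n},\QQ)=H^3(\ov{M}_{g,n},\QQ)$ is a \emph{direct summand} of $H^3(\tilde Y,\QQ)$, not that the whole group vanishes. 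So you cannot conclude $J^3(\tilde Y)=0$, and the Bloch--Srinivas step on $\tilde Y$ does not give $\CH^2(\tilde Y)_{\alg}\otimes\QQ=0$. (There is also a smaller issue you pass over: defining $\pi^*$ on $\CH^2$ for a non-flat, non-l.c.i.\ resolution requires going through the smooth DM stack or an Alexander-scheme argument; the formula $\pi_*\pi^*=\id$ is not automatic.)

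The paper's proof sidesteps this entirely by never introducing a resolution. It stays on the smooth Galois cover $X$ and uses Lemma~\ref{ker(Phi) is torsion}---which you announce in your plan but never actually invoke---to reduce to showing $J^3(X)^G$ is torsion. The point is that while $H^3(X,\QQ)$ itself need not vanish, its $G$-invariant part does: transfer for the finite quotient $p$ gives $H^3(X,\QQ)^G\cong H^3(\ov{M}_{g,n},\QQ)=0$ (via Arbarello--Cornalba), whence $H^3(X,\RR)^G=0$, and a short topological argument shows that $J^3(X)^G$ is then a closed countable subgroup of a real torus, hence finite. This is exactly the ``only the $\ov{M}_{g,n}$-part of $H^3$ matters'' phenomenon you would need to salvage your approach on $\tilde Y$, but taking $G$-invariants on $X$ isolates it cleanly, whereas a general resolution does not.
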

\begin{proof}
    The Abel-Jacobi map of $X$ is $G$-equivariant. 
    Therefore, $\Phi(\CH^2(X)_{\alg}^G)\subset J^3(X)^G$. By Lemma \ref{ker(Phi) is torsion}, it suffices to prove $J^3(X)^G \subset J^3(X)_{\tor}$.\\
    \indent Consider the natural map $f:H^3(X,\mathbb{R}) \to J^3(X)$. We have $f$ is $G$-equivariant, surjective, and $\ker f=H^3(X,\ZZ)/H^3(X,\ZZ)_{\tor}$. Hence for any $a \in f^{-1}(J^3(X)^G)$ and~$g \in G$, we have $g^*(a)-a\in H^3(X,\mathbb{Z})/H^3(X,\ZZ)_{\tor}$. Therefore, we get the map
    $$\varphi:f^{-1}(J^3(X)^G) \to \operatorname{Mor}_\mathrm{Set}(G,H^3(X,\mathbb{Z})/H^3(X,\ZZ)_{\tor}),$$
    $$a \mapsto (g \mapsto g^*(a)-a).$$
    \indent The natural morphisms $H^3(\ov{M}_{g,n},\CC)\to H_3(\ov{M}_{g,n},\CC)^\vee$ and $H^3(X,\CC)\to H_3(X,\CC)^\vee$ are isomorphisms by the universal coefficient theorem. From the construction of these isomorphisms, we can see that $H^3(X,\CC)\to H_3(X,\CC)^\vee$ is $G$-equivariant. So it induces an isomorphism $H^3(X,\CC)^G\to H_3\allowbreak(X,\CC)_G^\vee$. By \cite[Theorem III.2.4]{Bre}, the pushforward $p_*$ induces an isomorphism $H_3(X,\CC)_G\cong H_3(\ov{M}_{g,n},\CC)$. Hence
    $$H^3(X,\CC)^G\cong H_3(X,\CC)_G^\vee\cong H_3(\ov{M}_{g,n},\CC)^\vee\cong H^3(\ov{M}_{g,n},\CC).$$
    By \cite[Proposition 36]{Beh} and \cite[Theorem 2.1]{AC}, $H^3(\ov{M}_{g,n},\mathbb{C})\cong H^3(\ov{\mathcal{M}}_{g,n},\mathbb{C})=0$. Hence we have $H^3(X,\mathbb{C})^G=0$, and $H^3(X,\RR)^G=0$. \\
    \indent Therefore, $\varphi$ is injective. Hence $J^3(X)^G$ is a countable subgroup of $J^3(X)$. Since $G$-action on $J^3(X)$ is continuous, $J^3(X)^G$ is a closed subgroup. The proof is now completed by Lemma \ref{closed countable subgroup of torus}.
\end{proof}
\begin{lemma}
\label{closed countable subgroup of torus}
    Any closed countable subgroup $G$ of the torus group $T^n$ is finite.
\end{lemma}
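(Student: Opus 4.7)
The plan is to combine compactness of $T^n$ with the Baire category theorem. First I would note that $G$, being a closed subset of the compact Hausdorff group $T^n$, is itself compact Hausdorff. Hence $G$ is a nonempty locally compact Hausdorff space, so the Baire category theorem applies to it.

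Next I would exploit countability: write $G = \bigcup_{g \in G} \{g\}$ as a countable union of closed subsets. Since $G$ is a Baire space, at least one of the singletons $\{g_0\}$ must have nonempty interior in $G$; that is, $g_0$ is an isolated point of $G$. Because $G$ is a topological group, left translation by any $h \in G$ is a homeomorphism of $G$ onto itself, so every point of $G$ is isolated. Therefore $G$ is a discrete subspace of the compact space $T^n$.

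Finally, a discrete subset of a compact Hausdorff space is finite: the singletons form an open cover, which must admit a finite subcover. This forces $G$ to be finite, completing the proof.

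The argument is essentially routine once one recognizes that the combination \emph{compact} and \emph{countable} forces isolated points via Baire; I do not anticipate any technical obstacle. An alternative, equally short route is to invoke the fact that a closed subgroup of a Lie group is a Lie subgroup: the identity component of $G$ would then be a torus of some dimension, which is either trivial or uncountable, so countability forces $G$ to be zero-dimensional and hence discrete, and discreteness plus compactness gives finiteness. I would prefer the Baire-theoretic version since it keeps the proof elementary and self-contained.
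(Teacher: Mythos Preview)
Your proof is correct and takes a genuinely different route from the paper. You use the Baire category theorem on the compact Hausdorff group $G$ to produce an isolated point, then homogeneity of the group to make every point isolated, then compactness of $G$ to conclude finiteness. (One minor phrasing quibble: the sentence ``a discrete subset of a compact Hausdorff space is finite'' is false as a general assertion---consider $\{1/n\}_{n\ge 1}$ in $[0,1]$---but your justification via the open cover by singletons is really using that $G$ itself is compact, which you established at the outset, so the argument is sound.)

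The paper instead projects $G$ onto each circle factor $p_i : T^n \to \mathbb{R}/\mathbb{Z}$. Each image $p_i(G)$ is compact (hence closed) and countable; since the closed subgroups of $\mathbb{R}/\mathbb{Z}$ are either the whole circle or finite cyclic, countability forces each $p_i(G)$ to be finite, and then $G \subset \prod_i p_i(G)$ is finite. Your Baire-theoretic argument is more general---it shows that any countable compact Hausdorff topological group is finite, with no use of the torus structure---while the paper's argument is more hands-on, trading the Baire theorem for the easy classification of closed subgroups of the circle.
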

\begin{proof}
    Fix an isomorphism $T^n \cong (\mathbb{R}/\mathbb{Z})^n$, and denote $p_i:T^n \to \mathbb{R}/\mathbb{Z}$ the projection to the $i$-th factor. Since $G$ is compact, so is $p_i(G)$.\\
    \indent For each $1\le i\le n$, since $p_i(G)$ is a countable subgroup of $\RR/\ZZ$, it must be a finite subgroup. Hence $G$ is finite.
\end{proof}
\indent Now we can finish the proof of Theorem \ref{CH2 is tautologically generated for rationally connected moduli space}.
\begin{proof}[Proof of Theorem \ref{CH2 is tautologically generated for rationally connected moduli space}]
    We first show that every algebraically trivial cycle in $\CH^2(\ov{M}_{g,n})$ is torsion. For any class $a \in \CH^2(\ov{M}_{g,n})_{\alg}$, we have $p^*a\in \CH^2(X)_{\alg}^G$. Hence by Proposition \ref{CH^2_alg is torsion}, $p^*a \in \CH^2(X)_{\tor}$. Since $|G|a=p_*p^*a$, the class $a$ is torsion.\\
    \indent Next, we show that any homologically trivial cycle in $\CH^2(\ov{M}_{g,n})$ is torsion. For any class $a \in \CH^2(\ov{M}_{g,n})_{\hom}$, we have $p^*a\in \CH^2(X)_{\hom}^G$. Using the notation from Corollary \ref{G-equivariant decomposition of diagonal}, and viewing $[\Gamma_1']_*$ and $[\Gamma_2']_*$ as correspondences, we have the relation $N|G|p^*a=[\Gamma_1']_*p^*a+[\Gamma_2']_*p^*a$. Since $\Ga_1'=V'\times X$, and $V'\cdot p^*a=0\in \CH^*(X)$, we have $[\Ga_1']_*p^*a=0$.\\
    \indent Take $j: \wt{D} \to D'$ to be a resolution of singularities. Denote $j':X\tm \wt{D}\to X\tm D'$ the morphism induced by $j$, and denote $i:D'\hookrightarrow X$ the closed embedding. We can take $\wt{\Gamma}_2'\subset X \times \wt{D}$ such that $j'_*[\wt{\Ga}_2']=[\Ga_2']\in \CH^*(X\times D')$. Considering $[\wt{\Gamma}_2']_*$ as a correspondence between~$\CH^2(X)$ and $\CH^1(\wt{D})$, and $[\Ga_2']_*$ as a correspondence acting on~$\CH^2(X)$, we have $(i\circ j)_*[\wt{\Gamma}_2']_*p^*a=[\Gamma_2']_*p^*a$ by the projection formula. Since algebraic equivalence is equivalent to homological equivalence (after modulo torsion) for divisors on a smooth projective variety, it follows that $[\wt{\Gamma}_2']_*p^*a\in \CH^1(\wt{D})_{\alg,\QQ}$. Hence we have $[\Ga'_2]_*p^*a\in \CH^2(X)_{\alg,\QQ}$, which implies that 
    $$N|G|^2a=N|G|p_*p^*a=p_*[\Gamma_2']_*p^*a \in \CH^2(\ov{M}_{g,n})_{\alg,\QQ}.$$
    \indent We have $\CH^2(\ov{M}_{g,n})_{\alg,\QQ}=0$ by the first part of this proof. Therefore, any homologically trivial cycle in $\CH^2(\ov{M}_{g,n})$ is torsion.\\
    \indent By \cite[Theorem 1.5(1)]{CLP}, $H^4(\ov{M}_{g,n},\mathbb{Q})$ is generated by tautological classes. Since homologically trivial cycles are torsion, it follows that $\CH^2(\ov{M}_{g,n})_\QQ$ is generated by tautological classes. Hence we have proved Theorem \ref{CH2 is tautologically generated for rationally connected moduli space}.
\end{proof}
\section{Proof of the generalized Franchetta conjecture in genus 11}
\label{Section: Last section}
\indent In this section, we finish our proof of Theorem \ref{My work}. By a standard “spreading out” argument (see \cite[Chapter 1]{Voi}), it is equivalent to proving Theorem \ref{My work} for each closed point $x\in \cF_{11}^\ci$. Here, the construction of $\cF_{11}^\ci$ is mentioned before diagram \ref{commutative diagram new}.\\
\indent For each closed point $x \in \mathcal{F}_{11}^\circ$, we pullback the bottom left part of diagram \ref{commutative diagram new} along the inclusion $x\hookrightarrow \cF_{11}^\circ$ to obtain the following commutative diagram
\begin{center}
 \begin{tikzcd}
& Z \arrow[dl,hook,"i_3" ] \arrow[rr,"\wt{\pi}_x"] \arrow[dd,"\wt{p}_x"pos=0.25] & & \mathbb{P}_{11} \arrow[dl,hook,"i_1"] \arrow[dd,] \\ 
\cC^\circ \arrow[rr, crossing over,"\wt{\pi}"pos=0.75] \arrow[dd,"\wt{p}"] & & \cP^\circ \\
& S_x \arrow[dl,hook,"i_2"] \arrow[rr,"\pi_x"pos=0.3] & & x \arrow[dl,hook] \\
\mathcal{S}_{11}^\circ \arrow[rr,"\pi"] & & \mathcal{F}_{11}^\circ. \arrow[from=uu, crossing over, "p"pos=0.3] 
\end{tikzcd}
\end{center}
The morphisms are denoted as above. Since $\cF_{11}'$ is smooth, $\cF_{11}^\ci$ is also smooth. Therefore, all varieties in this cube are smooth, and we can define  Gysin pullbacks and intersection products on their Chow groups.\\
\indent The morphism $\wt{p}_x:Z \to S_x$ is a $\mathbb{P}^{10}$-bundle. Define $H:=\wt{\pi}_x^*[\mathcal{O}_{\mathbb{P}_{11}}(1)] \in \CH^1(Z)$, then by the projection bundle formula,
$$\CH^2(Z) \cong \wt{p}_x^*\CH^2(S_x) \oplus H \cdot \wt{p}_x^*\CH^1(S_x) \oplus H^2 \cdot \mathbb{Z}.$$
\indent For any element $a \in \CH^2(\mathcal{S}_{11}^\circ)$, we want to prove $i_2^*a \in \ZZ \cdot o_{S_x}$, where $o_{S_x}$ is the Beauville--Voisin class defined in \cite{BV}. By \cite{Roj}, the Chow group $\CH^2(S_x)$ is torsion-free. Hence, we only need to prove $i_2^*a \in \QQ\cdot o_{S_x}$ in $\CH^2(S_x)_\mathbb{Q}$.\\
\indent Considering the class $b:=\wt{p}^*a\in \CH^2(\cC^\circ)_\mathbb{Q}$, we only need to prove the following proposition.
\begin{proposition}
\label{final thm}
    For any class $b \in \CH^2(\cC^\circ)_\mathbb{Q}$, we have
    $$i_3^*b \in \QQ\cdot \wt{p}_x^*o_{S_x} \oplus H \cdot \wt{p}_x^*\CH^1(S_x)_\mathbb{Q} \oplus \QQ \cdot H^2.$$
\end{proposition}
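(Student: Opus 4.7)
The plan is to push $b$ through the Mukai birational equivalence, apply the tautological generation of $\CH^2(\ov{M}_{11,1})_\QQ$, and evaluate the restriction to $Z$ via adjunction for the incidence divisor $Z\subset S_x\tm\PP^{11}$.

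First, I restrict $b$ to $\wt{\pi}^{-1}(W)\subset\cC^\circ$ and use Theorem \ref{P11 is open subset of coarse noduli M11 after deleting codim 2 subsets} to identify this open with an open subset of $\ov{M}_{11,1}$. Since $\ov{M}_{11,1}$ is rationally connected by \cite{Bar}, Theorem \ref{CH2 is tautologically generated for rationally connected moduli space} expresses $b|_{\wt{\pi}^{-1}(W)}$ as a $\QQ$-combination of tautological generators in codimension $2$. Because every curve in $\wt{\pi}^{-1}(W)$ is integral with at most one node, the reducible boundary divisors and every codim-$2$ boundary stratum (multi-nodal or reducible) vanish on restriction; only degree-$2$ monomials in $\psi_1,\kappa_1,\delta_0$ together with $\kappa_2$ survive. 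Setting $\psi:=c_1(\omega_{\wt{\pi}})\in\CH^1(\cC^\circ)$, $\kappa_a^{\cP^\circ}:=\wt{\pi}_*(\psi^{a+1})\in\CH^a(\cP^\circ)$, and letting $\delta\in\CH^1(\cP^\circ)$ denote the discriminant divisor, the Arbarello--Cornalba relation $\kappa_a^{(11,1)}=\pi_{11}^*\kappa_a^{(11)}+\psi_1^a$ on $\ov{M}_{11,1}$ yields a lift $\wt{b}\in\CH^2(\cC^\circ)_\QQ$ with $(b-\wt{b})|_{\wt{\pi}^{-1}(W)}=0$.

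Next, I compute $i_3^*\wt{b}$. The incidence divisor $Z\subset S_x\tm\PP^{11}$ has class $\wt{p}_x^*L_x+H$ with $L_x:=c_1(\cL_x)$, and $\omega_{S_x\tm\PP^{11}}\cong\cO(-12H)$, so adjunction gives
\[\psi|_Z=c_1(\omega_{Z/\PP^{11}})=\wt{p}_x^*L_x+H.\]
Squaring and applying the Beauville--Voisin relation $L_x^2=20\,o_{S_x}\in\CH^2(S_x)$ yields
\[\psi^2|_Z=20\,\wt{p}_x^*o_{S_x}+2H\cdot\wt{p}_x^*L_x+H^2,\]
which lies in the target subgroup. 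Every other lifted generator is of the form $\psi^\ep\cdot\wt{\pi}^*\al$ with $\ep\in\{0,1\}$ and $\al\in\CH^{2-\ep}(\cP^\circ)$; since $\CH^j(\PP_{11})=\ZZ\cdot h^j$, the pullback $\wt{\pi}^*\al|_Z=\wt{\pi}_x^*(\al|_{\PP_{11}})$ is a multiple of $H^{2-\ep}$, so every such product lands in $H\cdot\wt{p}_x^*\CH^1(S_x)_\QQ\oplus\QQ\cdot H^2$.

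Finally, I treat the boundary remainder: $b-\wt{b}$ is supported on $\wt{\pi}^{-1}(\cP^\circ\setminus W)$, and $\cP^\circ\setminus W$ has codim $\ge 2$ by Theorem \ref{P11 is open subset of coarse noduli M11 after deleting codim 2 subsets}(4). Since $\wt{\pi}$ is flat with geometrically integral fibers by Proposition \ref{refinement of moduli of genus 11 K3}, each top-dimensional component of $\wt{\pi}^{-1}(\cP^\circ\setminus W)$ is irreducible and of the form $\wt{\pi}^{-1}(F)$ for an irreducible codim-$2$ component $F\subset\cP^\circ\setminus W$; its class equals $\wt{\pi}^*[F]$, and $i_3^*\wt{\pi}^*[F]=\wt{\pi}_x^*(i_1^*[F])$ is a multiple of $H^2$. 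Combining, $i_3^*b\in\QQ\cdot\wt{p}_x^*o_{S_x}\oplus H\cdot\wt{p}_x^*\CH^1(S_x)_\QQ\oplus\QQ\cdot H^2$. The main obstacle I expect is pinpointing which tautological generator contributes to the $\wt{p}_x^*\CH^2(S_x)$-summand of $\CH^2(Z)_\QQ$: only $\psi^2$ does so, and Beauville--Voisin's identity forces that contribution to lie in $\QQ\cdot o_{S_x}$, which is exactly the content of the Franchetta conjecture.
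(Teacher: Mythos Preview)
Your approach is essentially the paper's: restrict to $\wt{\pi}^{-1}(W)$, invoke tautological generation of $\CH^2(\ov{M}_{11,1})_\QQ$, lift each surviving generator to $\cC^\circ$ as a polynomial in $\psi$ and classes pulled back from $\cP^\circ$, and treat the remainder supported on $\wt{\pi}^{-1}(\cP^\circ\setminus W)$ via the codimension-$2$ condition. Where the paper packages the evaluation on $Z$ into Lemmas~\ref{lemma1} and~\ref{lemma2} (any product of two divisors on $Z$ lands in the target, by Beauville--Voisin), you make this concrete by computing $\psi|_Z=\wt{p}_x^*L_x+H$ via adjunction for the incidence divisor $Z\subset S_x\times\PP^{11}$ and squaring; this is exactly the same content, and your explicit formula is a clean way to see where the $o_{S_x}$ contribution comes from.

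There is one point you gloss over that the paper treats carefully. Your assertion that ``only degree-$2$ monomials in $\psi_1,\kappa_1,\delta_0$ together with $\kappa_2$ survive'' is not a consequence of vanishing alone. Among the additive tautological generators in codimension $2$ there is the decorated boundary class $\xi_{G_1,*}\psi_h$, where $G_1$ is the irreducible one-node graph and $h$ is a half-edge at the node; this class is supported on $\delta_0$, does \emph{not} vanish on $\wt{\psi}_W(\wt{\pi}^{-1}(W))$, and is not a priori a product of divisors on $\ov{M}_{11,1}$. The paper disposes of it via the self-intersection formula \cite[Equation~11]{GR}, which after restriction gives $\wt{\psi}_W^*(\xi_{G_1,*}\psi_h)=-\tfrac{1}{2}\,\wt{\psi}_W^*\bigl(\xi_{G_1,*}[\ov{M}_{G_1}]\bigr)^2$, i.e.\ a multiple of $\delta_0^2$. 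Once you insert this step, your list of surviving generators is correct and the remainder of your argument goes through unchanged.
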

Before we prove Proposition \ref{final thm}, we state the following two lemmas.
\begin{lemma}
\label{lemma1}
    For any class $b' \in \CH^2(\cP^\circ)_\mathbb{Q}$, the pullback $i_3^* \wt{\pi}^*b'\in\QQ\cdot H^2$.
\end{lemma}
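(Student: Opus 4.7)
The plan is to reduce the computation of $i_3^*\wt{\pi}^*b'$ to the fiber $\PP_{11}$, which is a projective space, and then exploit that $\CH^2(\PP^{11})_{\QQ}$ is one-dimensional.

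The first step is to check that the square
\begin{center}
\begin{tikzcd}
Z \arrow[r,"\wt{\pi}_x"]\arrow[d,hook,"i_3"] & \PP_{11}\arrow[d,hook,"i_1"] \\
\cC^\circ \arrow[r,"\wt{\pi}"] & \cP^\circ
\end{tikzcd}
\end{center}
is Cartesian. By construction, $Z$ is defined as the pullback of $\cC^\circ$ along the inclusion $x \hookrightarrow \cF_{11}^\circ$. Since $\cC^\circ \to \cF_{11}^\circ$ factors as $\cC^\circ \xrightarrow{\wt{\pi}} \cP^\circ \xrightarrow{p} \cF_{11}^\circ$ and $\PP_{11} = p^{-1}(x)$, we have $Z = \wt{\pi}^{-1}(\PP_{11})$, so the square is indeed Cartesian.

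The second step is the base change identity $i_3^*\wt{\pi}^*b' = \wt{\pi}_x^*i_1^*b'$. This is standard: $\wt{\pi}:\cC^\circ\to \cP^\circ$ is a flat family of curves (equivalently, $i_1$ is a regular embedding of smooth varieties since $\cP^\circ$ is smooth as a $\PP^{11}$-bundle over $\cF_{11}^\circ$ and $\PP_{11} \cong \PP^{11}$), so compatibility of flat pullback with Gysin pullback in a Cartesian square applies.

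Finally, since $\PP_{11} \cong \PP^{11}$, we have $\CH^2(\PP_{11})_{\QQ} = \QQ\cdot H_0^2$, where $H_0 := [\cO_{\PP^{11}}(1)]$ is the hyperplane class. Thus $i_1^*b' = c\cdot H_0^2$ for some $c \in \QQ$, and by definition $H = \wt{\pi}_x^*H_0$, so
$$i_3^*\wt{\pi}^*b' = \wt{\pi}_x^*i_1^*b' = c\cdot(\wt{\pi}_x^*H_0)^2 = c\cdot H^2 \in \QQ\cdot H^2,$$
as desired. There is no substantive obstacle here; the only point that requires comment is the base change identity, which is automatic in this smooth setting.
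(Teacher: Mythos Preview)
Your proof is correct and follows exactly the same approach as the paper: use the base change identity $i_3^*\wt{\pi}^*b' = \wt{\pi}_x^*i_1^*b'$ coming from the Cartesian square, then note that $i_1^*b'$ lands in $\QQ\cdot[\cO_{\PP_{11}}(1)]^2$ since $\PP_{11}\cong\PP^{11}$, and pull back via $\wt{\pi}_x$. The paper's proof is the same two-line argument; you have simply spelled out the justification for the Cartesian square and the base change compatibility more explicitly.
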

\begin{proof}
    We have $i_3^*\wt{\pi}^*b'=\wt{\pi}_x^*i_1^*b'$. Since $i_1^*b'\in \QQ\cdot[\mathcal{O}_{\mathbb{P}_{11}}(1)]^2$, we have $\wt{\pi}_x^*i_1^*b'\in \QQ\cdot H^2$.
\end{proof}
\begin{lemma}
\label{lemma2}
    For any two classes $b_1,b_2 \in \CH^1(\cC^\circ)_\mathbb{Q}$, we have 
    $$i_3^*(b_1 \cdot b_2)\in \QQ\cdot \wt{p}_x^*o_{S_x} \oplus H \cdot \wt{p}_x^*\CH^1(S_x)_\mathbb{Q} \oplus \QQ \cdot H^2.$$
\end{lemma}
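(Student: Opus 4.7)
The plan is to combine the projective bundle formula for the $\PP^{10}$-bundle $\wt p:\cC^\circ\to\cS_{11}^\circ$ with property (2) of the Beauville--Voisin class. The key step is to choose a good generator for the relative hyperplane class. I would set
\begin{equation*}
\wt H := \wt\pi^*[\cO_{\cP^\circ/\cF_{11}^\circ}(1)] \in \CH^1(\cC^\circ)_\QQ,
\end{equation*}
and check that $\wt H$ restricts to the hyperplane class on each fiber of $\wt p$. Indeed, for a point $s\in\cS_{11}^\circ$ lying over $[S,\cL]\in\cF_{11}^\circ$, the fiber $\wt p^{-1}(s)\cong\PP(H^0(S,\cL\otimes\cI_s))$ is mapped by $\wt\pi$ linearly as the hyperplane $\{C\ni s\}$ in $p^{-1}([S,\cL])\cong\PP(H^0(S,\cL))$, so $\cO_{\cP^\circ}(1)$ pulls back to $\cO(1)$ on this $\PP^{10}$. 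The projective bundle formula therefore gives the splitting
\begin{equation*}
\CH^1(\cC^\circ)_\QQ = \wt p^*\CH^1(\cS_{11}^\circ)_\QQ \,\oplus\, \QQ\cdot\wt H,
\end{equation*}
and by the Cartesian nature of the cube (with $\mathbb{P}_{11}\hookrightarrow\cP^\circ$ the fiber over $x$) we have $i_3^*\wt H = \wt\pi_x^*[\cO_{\mathbb{P}_{11}}(1)] = H$.

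Next, I would write $b_k = \wt p^*\alpha_k + \lambda_k \wt H$ with $\alpha_k\in\CH^1(\cS_{11}^\circ)_\QQ$ and $\lambda_k\in\QQ$ for $k=1,2$, and simply expand
\begin{equation*}
b_1\cdot b_2 = \wt p^*(\alpha_1\cdot\alpha_2) + \wt H\cdot\wt p^*(\lambda_2\alpha_1 + \lambda_1\alpha_2) + \lambda_1\lambda_2\,\wt H^2.
\end{equation*}
Applying $i_3^*$ and using the compatibilities $i_3^*\wt p^* = \wt p_x^* i_2^*$ and $i_3^*\wt H = H$ yields
\begin{equation*}
i_3^*(b_1\cdot b_2) = \wt p_x^*\bigl(i_2^*\alpha_1\cdot i_2^*\alpha_2\bigr) + H\cdot\wt p_x^*\bigl(\lambda_2 i_2^*\alpha_1 + \lambda_1 i_2^*\alpha_2\bigr) + \lambda_1\lambda_2\,H^2.
\end{equation*}

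The second summand lies in $H\cdot\wt p_x^*\CH^1(S_x)_\QQ$ and the third lies in $\QQ\cdot H^2$ automatically. The only nontrivial point is the first summand: $i_2^*\alpha_1\cdot i_2^*\alpha_2$ is the intersection of two divisor classes on the K3 surface $S_x$, and this is a rational multiple of $o_{S_x}$ by property (2) of the Beauville--Voisin class recalled in the Introduction. Hence $\wt p_x^*(i_2^*\alpha_1\cdot i_2^*\alpha_2)\in\QQ\cdot\wt p_x^*o_{S_x}$, and the three pieces combine into the claimed decomposition.

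There is essentially no obstacle here; the proof is a direct bookkeeping exercise once the right basis is in place. The one point that warrants care is the identification of $\wt H$ as a valid relative hyperplane class, which implicitly uses that $\cS_{11}^\circ$ parametrizes K3 surfaces with trivial automorphism groups and thus carries a universal polarization making $\wt p$ a genuine projectivization. After this, the Beauville--Voisin relation does all the substantive work.
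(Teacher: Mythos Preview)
Your proof is correct and follows the same strategy as the paper: the projective bundle formula together with the Beauville--Voisin relation. The only difference is that the paper restricts first, writing $i_3^*(b_1\cdot b_2)=i_3^*b_1\cdot i_3^*b_2$ and then applying the projective bundle formula directly on $Z$ via $\CH^1(Z)_\QQ \cong \wt{p}_x^*\CH^1(S_x)_\QQ \oplus \QQ\cdot H$; this avoids introducing the global class $\wt H$ and checking its restriction, but the substance is identical.
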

\begin{proof}
    By the projection bundle formula, we have $$\CH^1(Z)_\mathbb{Q} \cong \wt{p}_x^*\CH^1(S_x)_\mathbb{Q} \oplus \QQ\cdot H.$$
    By \cite[Theorem 1(3)]{BV}, the intersection product of any two divisors on $S_b$ is some multiple of $o_{S_x}$. Hence we have proved this lemma.
\end{proof}
\begin{proof}[Proof of Proposition \ref{final thm}]
 Using the notation in Theorem \ref{P11 is open subset of coarse noduli M11 after deleting codim 2 subsets}, suppose the irreducible components of $\cP^\circ\ba W $ are $Z_1,...,Z_n$, and suppose 
$$\codim Z_1=...=\codim Z_m=2<\codim Z_{m+1}\leq...\leq \codim Z_n$$
for some integer $1\le m\le n$. Denote $j:W\hookrightarrow \cP^\ci$ and $j':\pi^{-1}(W)\hookrightarrow\cC^\ci$ the open embeddings. Then we have the following commutative diagram
\begin{center}
    \begin{tikzcd}
    \bigoplus_{i=1}^m \CH^0(Z_i)_\mathbb{Q} \arrow[r]\arrow[d,"\wt{\pi}^*"] & \CH^2(\cP^\circ)_\mathbb{Q} \arrow[r,"j^*"]\arrow[d,"\wt{\pi}^*"] &\CH^2(W)_\mathbb{Q} \arrow[r]\arrow[d,"\wt{\pi}^*"] & 0\\
    \bigoplus_{i=1}^m \CH^0(\wt{\pi}^{-1}(Z_i))_\mathbb{Q} \arrow[r] & \CH^2(\cC^\ci)_\mathbb{Q} \arrow[r,"j'^*"] & \CH^2(\wt{\pi}^{-1}(W))_\mathbb{Q} \arrow[r] & 0
\end{tikzcd}
\end{center}
where the two rows are exact.\\
\indent Since the morphism $\wt{\psi}_W$ in Theorem \ref{P11 is open subset of coarse noduli M11 after deleting codim 2 subsets} is an open embedding, we have
$$\wt{\psi}_W^*:\CH^*(\ov{M}_{11,1})_\QQ\to\allowbreak \CH^*(\pi^{-1}(W))_\QQ$$
is surjective. By \cite[Corollary 2]{Bar}, $\mathcal{M}_{11,1}$ is unirational, so is $\ov{M}_{11,1}$. Hence $\ov{M}_{11,1}$ is rationally connected. By Theorem \ref{CH2 is tautologically generated for rationally connected moduli space}, $\CH^2(\ov{M}_{11,1})_\mathbb{Q}$ is generated by tautological classes. Therefore, $\CH^2(\pi^{-1}(W))_\mathbb{Q}$ is generated by the pullback of tautological classes in $\CH^2(\ov{M}_{11,1})_\QQ$.\\
\indent If $a\in \CH^2(\ov{M}_{11,1})_\QQ$ is a tautological class that can be represented by a cycle supported on $\ov{M}_{11,1}\ba \wt{\psi}_W(\wt{\pi}^{-1}(W))$, then $\wt{\psi}_W^*a=0$. Using Theorem \ref{P11 is open subset of coarse noduli M11 after deleting codim 2 subsets}(3), we can see that $\CH^2(\pi^{-1}(W))_\mathbb{Q}$ is generated by the pullback of the following $7$ classes in $\CH^2(\ov{M}_{11,1})_\QQ$ along $\wt{\psi}_W$:
$$\kappa_1^2\text{, } \kappa_1\psi_1\text{, } \psi_1^2\text{, } \kappa_2\text{, }{\xi_{G_1}}_*[\ov{M}_{G_1},\kappa_1] \text{, }  {\xi_{G_1}}_*[\ov{M}_{G_1},\psi_1] \text{, }  {\xi_{G_1}}_*[\ov{M}_{G_1},\psi_2]=\xi_{G_1,*}[\ov{M}_{G_1},\psi_3].$$
Here, $G_1$ is the stable graph corresponds to integral pointed curves of genus $11$ with one node and one labeled point, and the morphism $\xi_{G_1}:\ov{M}_{G_1}\to \ov{M}_{11,1}$ is the canonical morphism defined in \cite[Appendix A]{GR}. We have an isomorphism $\ov{M}_{G_1}\cong \ov{M}_{10,3}$, and we choose the labeled points such that under the morphism $\xi_{G_1}$, the labeled points $2,3$ in~$\ov{M}_{10,3}$ correspond to the node. Hence we can define the tautological classes $[\ov{M}_{G_1},\kappa_1]$ and $[\ov{M}_{G_1},\psi_i]$ ($1\le i\le3$) on $\ov{M}_{G_1}$.\\
\indent For each $1\le i\le m$, the morphism $\pi^{-1}(Z_i)\to Z_i$ is flat with geometrically integral fibers. Therefore, $\pi^{-1}(Z_i)$ is irreducible. Hence $\CH^0(\pi^{-1}(Z_i))_\QQ$ is generated by the class $[\pi^{-1}(Z_i)]$.\\
\indent We now claim that for each class $b\in \CH^2(\cC^\ci)_\QQ$, if we can write $j'^*b$ in the form
$$j'^*b=\sum_{k=1}^lb_k^1\cdot b_k^2+\wt{\pi}^*b'$$ 
for some integer $l$, some classes $b_k^1,b_k^2\in \CH^1(\pi^{-1}(W))_\QQ$ ($1\le k\le l$) and $b'\in \CH^2(W)_\QQ$, then we have 
$$i_3^*b \in \QQ\cdot \wt{p}_x^*o_{S_x} \oplus H \cdot \wt{p}_x^*\CH^1(S_x)_\mathbb{Q} \oplus \QQ \cdot H^2.$$
Indeed, since $j'^*:\CH^1(\cC^\ci)_\QQ\to \CH^1(\pi^{-1}(W))_\QQ$ is an isomorphism, we can define the classes $c_k^i:={j'^*}^{-1}(b_k^i)$ for $1\le k\le l$ and $i=1,2$. We can choose $c'\in \CH^2(\cP^\ci)_\QQ$ such that $j^*c'=b'$. Then we have $j'^*(b-\sum_{k=1}^l c_k^1\cdot c_k^2-\wt{\pi}^*c')=0$. Hence 
$$b=\sum_{k=1}^l c_k^1\cdot c_k^2+\wt{\pi}^*c'+\sum_{i=1}^m a_i\wt{\pi}^*[Z_i]$$
for some rational numbers $a_1,...,a_m\in\QQ$. Using Lemma \ref{lemma1} and Lemma \ref{lemma2}, the claim is proved.\\
\indent It remains to show that the restriction of the $7$ tautological classes can be written in the form $$\sum_{k=1}^lb_k^1\cdot b_k^2+\wt{\pi}^*b'.$$
It is straightforward to see that the classes $\wt{\psi}_W^*\ka_1^2$, $\wt{\psi}_W^*\ka_1\psi_1$ and $\wt{\psi}_W^*\psi_1^2$ can be written in this form.\\
\indent Denote $\pi_{11}:\ov{M}_{11,1}\to \ov{M}_{11}$ the forgetful morphism, and denote $[\ov{M}_{11},\ka_2]$ the $\ka_2$-class on $\ov{M}_{11}$. Then we have the following formulas on $\CH^*(\ov{M}_{11,1})_\QQ$:
$$\ka_2=\pi_{11}^*[\ov{M}_{11},\ka_2]-\psi_1^2,$$
$${\xi_{G_1}}_*\ka_1=\ka_1\cdot {\xi_{G_1}}_*[\ov{M}_{G_1}],$$ 
$${\xi_{G_1}}_*\psi_1=\psi_1\cdot {\xi_{G_1}}_*[\ov{M}_{G_1}].$$
Using the notation and result in Theorem \ref{P11 is open subset of coarse noduli M11 after deleting codim 2 subsets}, we have $\wt{\psi}_W^*\pi_{11}^*=\wt{\pi}^*\psi_W^*$. Therefore, the classes $\wt{\psi}_W^*\ka_2$, $\wt{\psi}_W^*{\xi_{G_1}}_*\ka_1$ and $\wt{\psi}_W^*{\xi_{G_1}}_*\psi_1$ can be written in the form $\sum_{k=1}^lb_k^1\cdot b_k^2+\wt{\pi}^*b'$.\\
\indent Apply \cite[Equation 11]{GR} to the stable graph $G_1$, and pullback the two sides of the equation along $\wt{\psi}_W$. After canceling the cycles that are supported on $\ov{M}_{11,1}\ba \wt{\psi}_W(\wt{\pi}^{-1}(W))$, we get
$$\wt{\psi}_W^*{\xi_{G_1}}_*\psi_2=-\frac{1}{2}\wt{\psi}_W^*({\xi_{G_1}}_*[\ov{M}_{G_1}])^2.$$
Therefore, $\wt{\psi}_W^*{\xi_{G_1}}_*\psi_2$ can also be written in the form $\sum_{k=1}^lb_k^1\cdot b_k^2+\wt{\pi}^*b'$.\\
\indent In conclusion, we have proved this proposition. 
\end{proof}
Therefore, we complete the proof of Theorem \ref{My work}.


\begin{thebibliography}{99}  
\bibitem{AC87} E. Arbarello and M. Cornalba, The Picard groups of the moduli spaces of curves, Topology {\bf 26} (1987), no.~2, 153--171; MR0895568
\bibitem{AC} E. Arbarello and M. Cornalba, Calculating cohomology groups of moduli spaces of curves via algebraic geometry, Inst. Hautes \'Etudes Sci. Publ. Math. No. 88 (1998), 97--127 (1999); MR1733327
\bibitem{Bar} I. Barros, Geometry of the moduli space of $n$-pointed $K3$ surfaces of genus 11, Bull. Lond. Math. Soc. {\bf 50} (2018), no.~6, 1071--1084; MR3891944
\bibitem{BV} A. Beauville and C. Voisin, On the Chow ring of a $K3$ surface, J. Algebraic Geom. {\bf 13} (2004), no.~3, 417--426; MR2047674
\bibitem{Beh} K.~A. Behrend, Cohomology of stacks, in {\it Intersection theory and moduli}, 249--294, ICTP Lect. Notes, XIX, Abdus Salam Int. Cent. Theoret. Phys., Trieste, ; MR2172499
\bibitem{BFMV} G. Bini, F. Felici, M. Melo, and F. Viviani, {\it Geometric invariant theory for polarized curves}, Lecture Notes in Mathematics, 2122, Springer, Cham, 2014; MR3288138
\bibitem{BS} S.~J. Bloch and V. Srinivas, Remarks on correspondences and algebraic cycles, Amer. J. Math. {\bf 105} (1983), no.~5, 1235--1253; MR0714776
\bibitem{BP} M. Boggi and M. Pikaart, Galois covers of moduli of curves, Compositio Math. {\bf 120} (2000), no.~2, 171--191; MR1739177
\bibitem{Bre} G.~E. Bredon, {\it Introduction to compact transformation groups}, Pure and Applied Mathematics, Vol. 46, Academic Press, New York-London, 1972; MR0413144
\bibitem{CLP} S. Canning, H.~K. Larson and S. Payne, Extensions of tautological rings and motivic structures in the cohomology of $\overline {M}_{g,n}$, Forum Math. Pi {\bf 12} (2024), Paper No. e23, 31 pp.; MR4827752
\bibitem{Cor} M. Cornalba, On the locus of curves with automorphisms, Ann. Mat. Pura Appl. (4) {\bf 149} (1987), 135--151; MR0932781
\bibitem{Cor08} M. Cornalba, Erratum: ``On the locus of curves with automorphisms'' [Ann. Mat. Pura Appl. (4) {\bf 149} (1987), 135--151; MR0932781], Ann. Mat. Pura Appl. (4) {\bf 187} (2008), no.~1, 185--186; MR2346015
\bibitem{FL} L. Fu and R.~C. Laterveer, Special cubic four-folds, K3 surfaces, and the Franchetta property, Int. Math. Res. Not. IMRN {\bf 2023}, no.~10, 8872--8902; MR4589089
\bibitem{GR} T. Graber and R.~V. Pandharipande, Constructions of nontautological classes on moduli spaces of curves, Michigan Math. J. {\bf 51} (2003), no.~1, 93--109; MR1960923
\bibitem{HH} B.~E. Hassett and D. Hyeon, Log canonical models for the moduli space of curves: the first divisorial contraction, Trans. Amer. Math. Soc. {\bf 361} (2009), no.~8, 4471--4489; MR2500894
\bibitem{Huy} D. Huybrechts, {\it Lectures on K3 surfaces}, Cambridge Studies in Advanced Mathematics, 158, Cambridge Univ. Press, Cambridge, 2016; MR3586372
\bibitem{Kah} B. Kahn, On the universal regular homomorphism in codimension 2, Ann. Inst. Fourier (Grenoble) {\bf 71} (2021), no.~2, 843--848; MR4353922
\bibitem{Kol} J. Koll\'ar, {\it Rational curves on algebraic varieties}, Ergebnisse der Mathematik und ihrer Grenzgebiete. 3. Folge. A Series of Modern Surveys in Mathematics, 32, Springer, Berlin, 1996; MR1440180
\bibitem{Mil} J.~W. Milnor, {\it Singular points of complex hypersurfaces}, Annals of Mathematics Studies, No. 61, Princeton Univ. Press, Princeton, NJ, 1968 Univ. Tokyo Press, Tokyo, 1968; MR0239612
\bibitem{Muk} S. Mukai, Curves and $K3$ surfaces of genus eleven, in {\it Moduli of vector bundles (Sanda, 1994; Kyoto, 1994)}, 189--197, Lecture Notes in Pure and Appl. Math., 179, Dekker, New York, ; MR1397987 
\bibitem{Mur} J.~P. Murre, Applications of algebraic $K$-theory to the theory of algebraic cycles, in {\it Algebraic geometry, Sitges (Barcelona), 1983}, 216--261, Lecture Notes in Math., 1124, Springer, Berlin, ; MR0805336
\bibitem{OG} K.~G. O'Grady, Moduli of sheaves and the Chow group of $K3$ surfaces, J. Math. Pures Appl. (9) {\bf 100} (2013), no.~5, 701--718; MR3115830
\bibitem{Ols} M.~C. Olsson, {\it Algebraic spaces and stacks}, American Mathematical Society Colloquium Publications, 62, Amer. Math. Soc., Providence, RI, 2016; MR3495343
\bibitem{PSY} N. Pavic, J. Shen and Q. Yin, On O'Grady's generalized Franchetta conjecture, Int. Math. Res. Not. IMRN {\bf 2017}, no.~16, 4971--4983; MR3687122
\bibitem{Ran} Z. Ran, Deformations of maps, in {\it Algebraic curves and projective geometry (Trento, 1988)}, 246--253, Lecture Notes in Math., 1389, Springer, Berlin, ; MR1023402
\bibitem{Roj} A.~A. Ro\u itman, The torsion of the group of $0$-cycles modulo rational equivalence, Ann. of Math. (2) {\bf 111} (1980), no.~3, 553--569; MR0577137
\bibitem{SD} B. Saint-Donat, Projective models of $K-3$ surfaces, Amer. J. Math. {\bf 96} (1974), 602--639; MR0364263
\bibitem{Sch} D. Schubert, A new compactification of the moduli space of curves, Compositio Math. {\bf 78} (1991), no.~3, 297--313; MR1106299
\bibitem{Stack} The Stacks Project Authors, The Stacks Project, \url{https://stacks.math.columbia.edu}.
\bibitem{Vis} A. Vistoli, Deformation theory of local complete intersections, preprint, \url{https://arxiv.org/abs/alg-geom/9703008}.
\bibitem{Voi} C. Voisin, {\it Chow rings, decomposition of the diagonal, and the topology of families}, Annals of Mathematics Studies, 187, Princeton Univ. Press, Princeton, NJ, 2014; MR3186044
\end{thebibliography}
\end{document}